\documentclass[a4paper]{article}

\usepackage[a4paper, total={6in, 8in}]{geometry}

\usepackage[utf8]{inputenc}

\usepackage{algorithm}
\usepackage{algorithmicx}
\usepackage{algpseudocode}
\usepackage{caption}
\usepackage{graphicx}
\usepackage{amsmath}
\usepackage{amssymb}
\usepackage{mathtools}
\usepackage{colonequals}
\usepackage{adjustbox}
\usepackage{subcaption}
\usepackage{xcolor}
\usepackage{tikz} 
\usepackage{amsthm}

\usepackage[toc,page]{appendix}
\usepackage{url}
\usepackage{booktabs}

\newtheorem{theorem}{Theorem}
\newtheorem{definition}{Definition}

\newtheorem{lemma}{Lemma}
\newtheorem{corollary}{Corollary}

\newtheorem{remark}{Remark}
\newtheorem{example}{Example}
\DeclareMathOperator{\sign}{sign}

\DeclareMathOperator*{\TV}{TV}
\DeclareMathOperator*{\TR}{TR}

\newcommand*{\logeq}{\ratio\Leftrightarrow}
\title{On Discrete Subproblems in Integer Optimal Control with Total Variation Regularization in Two Dimensions}
\date{March 2024}
\author{
	Paul Manns\\
	TU Dortmund University\\
	\texttt{paul.manns@tu-dortmund.de}
	\and
	Marvin Severitt\\
	TU Dortmund University\\
	\texttt{marvin.severitt@tu-dortmund.de}
}
\begin{document}
\maketitle

\begin{abstract}
We analyze integer linear programs which we obtain after discretizing two-dimensional subproblems arising from a trust-region algorithm for mixed integer optimal control problems with total variation regularization. We discuss NP-hardness of the discretized problems and the connection to graph-based problems. We show that the underlying polyhedron exhibits structural restrictions in its vertices with regards to which variables can attain fractional values at the same time. Based on this property, we derive cutting planes by employing a relation to shortest-path and minimum bisection problems. We propose a branching rule and a primal heuristic which improves previously found feasible points. We validate the proposed tools with a numerical benchmark in a standard integer programming solver. We observe a significant speedup for medium-sized problems. Our results give hints for scaling towards larger instances in the future.
\end{abstract}

\section{Introduction}
We concern ourselves with problems of the form 
\begin{gather}\label{eq:trip}
\begin{aligned}
    \min_{d}\quad & \sum_{i=1}^N \sum_{j=1}^M c_{i,j} d_{i,j} + \alpha  \sum_{j=1}^{M} \sum_{i=1}^{N-1}  \vert x_{i+1,j} + d_{i+1,j} - x_{i,j} - d_{i,j} \vert \\
    &+ \alpha  \sum_{i=1}^{N} \sum_{j=1}^{M-1}  \vert x_{i,j+1} + d_{i,j+1} - x_{i,j} - d_{i,j} \vert\\
    \text{s.t.}\quad
    & x_{i,j} + d_{i,j} \in \Xi \text{ for all } i \in [N], j \in [M],\quad \sum_{i=1}^N \sum_{j=1}^M  \vert  d_{i,j} \vert \le \Delta,
\end{aligned}
\tag{TR-IP}
\end{gather}
where $N,M \in \mathbb{N}$, $x \in \Xi^{N\times M}$, $c \in \mathbb{R}^{N\times M}$, $\alpha > 0$, and a finite set $\Xi \subset \mathbb{Z}$ are given. The problems \eqref{eq:trip} can be reformulated as integer (linear) programs and arise as trust-region subproblems in an algorithm for integer optimal control problems (IOCPs) with total variation penalization.
Optimal control problems are optimization problems which are constrained by ordinary or partial differential equations. IOCPs additionally restrict the control function to integer values. They are widely applicable modelling tools and their fields of application include the control of solar thermal climate systems, see \cite{burger2018solar}, aircraft trajectory planning, see \cite{Soler2016aircraft},  gas network control, see \cite{hante2017challenges}, and automotive control, see \cite{gerdts2005automobile}.

A popular approach to solve IOCPs is the combinatorial integral approximation due to optimality guarantees under conditions on the underlying differential equation, see \cite{sager2011combinatorial}. This approach however does not restrict the switching of the control function which can undermine the applicability, see \cite{hahn2021decomposition,bestehorn2021}, and their references for details.

One approach to decrease the switching is to include the total variation of the control function into the problem formulation.
\cite{buchheim2024parabolic} consider a bounded subset of functions with bounded variation on one-dimensional domains.
This allows for the construction of a branch-and-bound algorithm in order to solve parabolic optimal control problems with switching constraints, which can be extended to include additional combinatorial constraints. Instead of hard constraints, \cite{leyffer2022sequential} and \cite{marko2023integer} introduce a total variation penalty in the objective on one-dimensional domains.
An alternative way to account for switching costs is the combined minimization of a total number of switches and their switching times,
which is done by means of a proximal algorithm in \cite{de2019mixed}. Total variation penalties are also for smooth control functions and known as \emph{control volatility minimization}, see, e.g., \cite{loxton2020minimizing}.

In order to solve IOCPs with total variation penalization, \cite{leyffer2022sequential} propose a trust-region method for which the arising discretized subproblems are modelled as integer (linear) programs. The convergence and optimality results are given for an underlying one-dimensional domain. \cite{manns2023integer} extend the analysis to domains of dimension two and higher. The computational bottleneck of the approach is the computational demand of the underlying integer programs, currently solved without structure exploitation with off-the-shelf solvers. For the one-dimensional case, a shortest path or dynamic programming approach can be used to significantly accelerate the algorithm, see \cite{severitt2023efficient} and \cite{marko2023integer}. 
In this paper we turn to the two-dimensional case, analyzing the resulting subproblems and proposing a series of improvements to the integer programming formulation and its solution process with a standard solver in order to reduce the computational demand. 
We highlight that the discretized problems are interesting beyond the intended application in integer optimal control. Specifically, similar problems
can be found in image segmentation, see \cite{boykov2001,greig1989exact}, and multi-label optimization for Potts 
and Ising spin glass models, see \cite{Nieuwenhuis2013potts,de1995exact}, but \eqref{eq:trip} contains additional
 constraints such that it can be viewed as minimum $s$-$t$ cut problem with a knapsack-type constraint. 
\paragraph{Contribution}
We provide structural results for the underlying polyhedrons of an integer programming reformulation of \eqref{eq:trip}. We prove that the resulting problems are strongly NP-hard if the minimum bisection problem on subgraphs of the grid with an arbitrary number of holes is NP-hard. We extend our results in \cite{severitt2023efficient} and provide a conditional $p$-approximation for the integer programs.
We prove that the vertices of the underlying polyhedron can only attain non-integer values in connected parts of a corresponding graph. For the binary case, we show that every feasible point of the integer program is already a vertex of the polyhedron which is false for the non-binary case. We employ our findings in an integer programming solver-based solution process. We derive cutting planes which make use of this property of the polyhedron as well as an approach to improve primal points and a branching rule. We validate the improvements with respect to the run time on a numerical benchmark example.

\paragraph{Structure of the remainder} We introduce the problem class in Section \ref{sec: tr method} and briefly restate the trust-region algorithm from \cite{leyffer2022sequential}. We derive integer programs as reformulations of trust-region subproblems in Section \ref{sec: discrete problem}. We discuss NP-hardness for the subproblem in Subsection \ref{subsec: NP hard}. We introduce a Lagrangian relaxation in Subsection \ref{subsec: relax cap} and
analyze the connection to graph-based approaches, namely minimum $s$-$t$ cut problems, in Subsection \ref{subsec: connect graph}. Afterwards we state and prove the aforementioned property of the underlying polyhedron in Section \ref{sec: structure} which leads to cutting planes, primal points, and a branching rule in Section \ref{sec: IP based}. In Section \ref{sec: comp exp} we validate the proposed approaches 
computationally and discuss the results and how to gauge the computational demand of the integer program in Section \ref{sec: conclusion}. 

\paragraph{Notation} We abbreviate $[N] \coloneqq \{1, \ldots, N\}$. We introduce the notation $\lfloor x \rceil$ to represent the rounding of $x$ to the nearest integer value. In case of parity, $x$ is rounded up. The notation $\lceil x \rceil$ denotes rounding up to the nearest larger integer value while $\lfloor x \rfloor$ denotes rounding down.

In the paper we use the terminology ``polyhedron'' for which different definitions exist depending on the community. In our paper a polyhedron is the intersection of finitely many closed halfspaces.

\section{Trust-region method for IOCPs}
\label{sec: tr method}
In this section we introduce the motivating class of IOCPs as well as the trust-region algorithm employed to solve the IOCPs. We will discretize its subproblems in the next section and concern ourselves with the resulting discrete problem in the remainder of this paper.

Let $\alpha>0$ and $\Omega \subset \mathbb{R}^2$ be a rectangular domain. The IOCP reads
\begin{gather}
\begin{aligned}
    \min_{v \in L^2(\Omega)}\ &J(v) \coloneqq F(v)+\alpha \TV(v) \\
    \text{s.t. }\ \ & v(s) \in \Xi\coloneqq \{\xi_1, \ldots, \xi_m\}  \subset \mathbb{Z} \text{ for almost all (a.a.) } s \in \Omega.
    \label{eq:iocp}
\end{aligned}
\tag{IOCP}
\end{gather}
The function $F:L^2(\Omega) \mapsto \mathbb{R} $ is lower semicontinuous.
The term $\TV: L^1(\Omega) \to [0,\infty]$ denotes the total variation seminorm which models and penalizes the switching behaviour of the control function $v$. The set $\Xi$ contains all possible control values $\xi_1, \ldots, \xi_m$ and thus enforces integrality of the control function values. In this paper we assume that $\Xi$ is a contiguous set
of integers. 

The trust-region algorithm described in \cite{leyffer2022sequential} can be employed for problems of the form \eqref{eq:iocp}. The pseudo code is given in Algorithm \ref{alg: TR}. The algorithm consists of one outer and one inner loop. The inner loop solves a trust-region subproblem 
\begin{gather}
\begin{aligned}
\label{eq:tr}
pr(v, d) \coloneqq - \min_{d \in L^2(\Omega)} & (\nabla F(v), d)_{L^2(\Omega)} + \alpha \TV(v + d) - \alpha\TV(v) \\
\text{~~s.t.~~}
& v(s) + d(s) \in \Xi \text{ for a.a.\ } s \in \Omega,\\
& \|d\|_{L^1(\Omega)} \le \Delta
\end{aligned}
\tag{TR}
\end{gather}
to obtain a step inside the trust region. The negative objective value
of this partially linearized model over the trust-region is the \emph{predicted reduction}
and has the symbol $pr(v, d)$.
If the predicted reduction $pr$ is zero the algorithm terminates. The underlying optimality results and assumptions can be found in \cite{leyffer2022sequential} and \cite{manns2023integer}. If the predicted reduction is not zero it is checked if it exceeds a certain fraction of the actual reduction achieved by the solution to \eqref{eq:tr}. If yes, the step is accepted and the inner loop is terminated. Otherwise, the step is rejected and the inner loop begins anew with a reduced trust-region radius. The outer loop resets the trust-region radius to its initial value and triggers the inner loop again.

\begin{algorithm}
\caption{Sketch of the trust-region algorithm from \cite{leyffer2022sequential}}\label{alg:slip}
\textbf{Input: } feasible initial control $v^0 \in L^1(0,T)$ for \eqref{eq:iocp} (that is $v^0(t) \in \Xi$ for a.a.\ $t \in (0,T)$), reset
trust-region radius $\Delta^0 > 0$, 
acceptance ratio $\rho \in (0,1)$
\label{alg: TR}
\begin{algorithmic}[1]
\For{$n = 1,\ldots$}
\State $k \gets 0$, $\Delta^{n,0} \gets \Delta^0$
\Repeat
\State $d^{n,k} \gets$ minimizer of $\TR(v^{n-1},\Delta^{n,k})$
\Comment{Compute step.}
\If{$pr(v^{n-1}, d^{n,k}) = 0$}
\Comment{The predicted reduction is zero.}
\State Terminate with solution $v^{n-1}$.
\ElsIf{$J(v^{n - 1}) - J(v^{n-1} + d^{n,k}) < \rho pr(v^{n-1},d^{n,k})$}
\Comment{Reject step.}
\State $\Delta^{n,k+1} \gets \Delta^{n,k} / 2$, $k \gets k + 1$
\Else
\Comment{Accept step.}
\State $v^n \gets v^{n-1} + d^{n,k}$, $k \gets k + 1$
\EndIf
\Until{$J(v^{n - 1}) - J(v^{n-1} + d^{n,k}) \ge \rho pr(v^{n-1},d^{n,k})$}
\EndFor
\end{algorithmic}
\end{algorithm}
Because \eqref{eq:iocp} is posed in function space, we discretize the problem to solve it. An analysis of the discretization goes beyond the scope of this article and we use a uniform discretization here.

However, we note that discretizing the total 
variation and the controls with a uniform grid 
implies an anisotropic 
behavior of the solution that is governed by
the geometry of the grid 
cells. In particular, an anisotropic functional
is recovered in the limit when the mesh sizes
are driven to zero. The discretization dictates
the so-called Wulff shape of the functional,
see \cite{cristinelli2023conditional}.
We intend to integrate our approach in this work
into approximation schemes that
successively reduce the anisotropy of the
total variation functional
in the future.

We note that the choice of the $L^\infty$-norm instead of the $L^1$-norm also gives a polyhedral structure
of the trust-region constraint. The $L^\infty$-norm is not suitable for the trust-region algorithm on discrete-valued functions, however.
This is due to the fact that if two integer-valued functions coincide except for a set of arbitrarily
small measure, their $L^\infty$-norm is always bounded below by $1$ so that local optimality and thus stationarity cannot be defined and analyzed sensibly.

\section{The discretized trust-region subproblem and its relaxations}
\label{sec: discrete problem}
After a uniform discretization of the domain into $N \times M$ square cells, where $N,M \in \mathbb{N}$, the trust-region subproblems have the form
\begin{gather}
\begin{aligned}
    \min_{d \in \mathbb{Z}^{N\times M}}\quad & \sum_{i=1}^N \sum_{j=1}^M c_{i,j} d_{i,j} + \alpha  \sum_{j=1}^{M} \sum_{i=1}^{N-1}  \vert x_{i+1,j} + d_{i+1,j} - x_{i,j} - d_{i,j} \vert \\
    &+ \alpha  \sum_{i=1}^{N} \sum_{j=1}^{M-1}  \vert x_{i,j+1} + d_{i,j+1} - x_{i,j} - d_{i,j} \vert\\
    \text{s.t.}\quad
    & x_{i,j} + d_{i,j} \in \Xi \text{ for all } i \in [N], j \in [M]\quad\text{and}\quad \sum_{i=1}^N \sum_{j=1}^M  \vert  d_{i,j} \vert \le \Delta
\end{aligned}
\tag{TR-IP}
\end{gather}
with $c \in \mathbb{R}^{N \times M},\ x \in \Xi^{N \times M},\ \alpha>0,\ \Delta \in \mathbb{N}$. If $M=1$ or $N=1$ we call the problem one-dimensional, otherwise we refer to \eqref{eq:trip} as a two-dimensional problem, because the underlying structure can be viewed as an $N \times M$ grid, see Figure \ref{fig:grid}. The discretized trust-region constraint $\sum_{i=1}^N \sum_{j=1}^M  |d_{i,j}| \le \Delta$ is called the capacity constraint.
We have dropped a constant term corresponding to $\TV(v)$ in \eqref{eq:tr} from the objective as this does not affect our optimization. 

\begin{figure}
    \centering
	\resizebox{0.35 \textwidth}{!}{%

	\begin{tikzpicture}[node distance={65mm}, thick, main/.style = {draw, circle}] 
	\node[main, minimum size = 2.25 cm] (1) {\Huge $3,1$};
	\node[main, minimum size = 2.25 cm] (2) [right of=1] {\Huge $3,2$};
    \node[main, minimum size = 2.25 cm] (3) [right of=2] {\Huge $3,3$};

    \node[main, minimum size = 2.25 cm] (4) [below of=1]{\Huge $2,1$};
	\node[main, minimum size = 2.25 cm] (5) [below of=2] {\Huge $2,2$};
    \node[main, minimum size = 2.25 cm] (6) [below of=3] {\Huge $2,3$};
    
    \node[main, minimum size = 2.25 cm] (7) [below of=4]{\Huge $1,1$};
	\node[main, minimum size = 2.25 cm] (8) [below of=5] {\Huge $1,2$};
    \node[main, minimum size = 2.25 cm] (9) [below of=6] {\Huge $1,3$};

    	\node[main, minimum size = 2.25 cm] (10) [right of=3] {\Huge $3,4$};
	\node[main, minimum size = 2.25 cm] (11) [right of=6] {\Huge $2,4$};
    \node[main, minimum size = 2.25 cm] (12) [right of=9] {\Huge $1,4$};
    
    \draw (1) -- (2);
    \draw (1) -- (4);
    \draw (2) -- (3);
    \draw (2) -- (5);
    \draw (3) -- (6);
    \draw (4) -- (5);
    \draw (4) -- (7);
    \draw (5) -- (6);
    \draw (5) -- (8);
    \draw (6) -- (9);
     \draw (7) -- (8);
    \draw (8) -- (9);
    \draw (3) --(10);
    \draw(6) -- (11);
    \draw(9) --(12);
    \draw (10)--(11);
    \draw(11)--(12);
	\end{tikzpicture}
}

\caption{Example of an underlying grid with $N=3$ and $M=4$. For each node there exists a corresponding entry in the control $x+d$. For each edge there exists a corresponding absolute value term (horizontal: $\beta_{i,j}$, vertical: $\gamma_{i,j}$ in Subsection \ref{subsec:Ip form}) modeling the contribution of the control jump between neighbouring cells to the total variation.}
\label{fig:grid}
\end{figure}
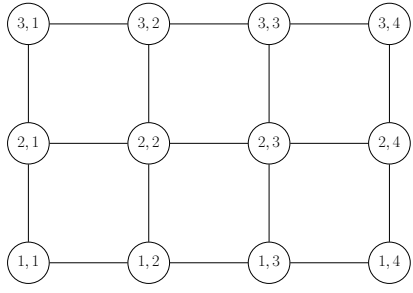

In Subsection \ref{subsec:Ip form} we will formulate \eqref{eq:trip} as an integer linear program and obtain the corresponding linear relaxation. Afterwards, we will motivate conjectures regarding the NP-hardness. In Subsection \ref{subsec: relax cap} we will introduce a Lagrangian relaxation and refer to it as \emph{the Lagrangian relaxation}. In Appendix \ref{subsec: dual decomp relax} an additional relaxation which we call \emph{the dual decomposition relaxation} can be found but is not included in the article itself because it did not prove useful in our preliminary computational experiments. For the Lagrangian relaxation we will prove an equivalence to the linear relaxation in Section \ref{sec: structure}.

\subsection{Integer programming formulation}
\label{subsec:Ip form}
By introducing auxiliary variables we are able to use linear inequalities to model the absolute values in the cost function and in the constraint. Thus \eqref{eq:trip} can be transformed into the integer linear program
\begin{gather}\label{eq:ip}
\begin{aligned}
    \min_{d,\delta,\beta,\gamma}\quad &  \sum_{i=1}^N \sum_{j=1}^M c_{i,j} d_{i,j} + \alpha  \left (\sum_{j=1}^{M} \sum_{i=1}^{N-1} \beta_{i,j} +  \sum_{i=1}^{N} \sum_{j=1}^{M-1} \gamma_{i,j} \right )\\
    \text{s.t.}\quad
    &  (d,\delta,\beta,\gamma) \in P_\Delta \text{ and }  d \in \mathbb{Z}^{N \times M},
\end{aligned}
\tag{IP}
\end{gather}
where $P_\Delta = \{ (d,\delta,\beta, \gamma) \in P: \ \sum_{i=1}^N \sum_{j=1}^M  \delta_{i,j} \le \Delta\} $ is the polyhedron obtained from the intersection of the capacity constraint and $P$ defined by 
\begin{align*}
    &(d, \delta,\beta,\gamma) \in P  
    &\logeq  \begin{cases}
         \min \Xi \leq x_{i,j} + d_{i,j} \leq \max \Xi &\text{ for all } i \in [N], j \in [M],\\
     -\beta_{i,j} \leq x_{i+1,j} + d_{i+1,j} - x_{i,j} - d_{i,j} \leq \beta_{i,j} &\text{ for all } i \in [N-1], j \in [M],\\
     -\gamma_{i,j} \leq x_{i,j+1} + d_{i,j+1} - x_{i,j} - d_{i,j} \leq \gamma_{i,j} &\text{ for all } i \in [N], j \in [M-1],\\
     -\delta_{i,j} \leq d_{i,j} \leq \delta_{i,j} &\text{ for all } i \in [N], j \in [M],
    \end{cases} 
\end{align*}
where we recall $x \in \Xi^{N\times M}$ so that there is always a feasible tuple with $d = 0$ and $\delta = 0$.
The corresponding linear programming relaxation reads
\begin{gather}\label{eq:lp}
\begin{aligned}
    \min_{d,\delta,\beta,\gamma}\quad &  \sum_{i=1}^N \sum_{j=1}^M c_{i,j} d_{i,j} + \alpha  \left (\sum_{j=1}^{M} \sum_{i=1}^{N-1} \beta_{i,j} +  \sum_{i=1}^{N} \sum_{j=1}^{M-1} \gamma_{i,j} \right )  \\
    \text{s.t.}\quad
    &  (d,\delta,\beta,\gamma) \in P_\Delta.
\end{aligned}
\tag{LP}
\end{gather}

\begin{remark}
\label{remark:sol construction}
    A feasible point $(d,\delta,\beta,\gamma)$ can only be optimal for \eqref{eq:ip} if $\beta_{i,j} = |x_{i+1,j}+d_{i+1,j}-x_{i,j}-d_{i,j}|$ and $\gamma_{i,j}=|x_{i,j+1}+d_{i,j+1}-x_{i,j}-d_{i,j}|$ because otherwise we could reduce the objective value by setting the values of $\beta$ and $\gamma$ to those absolute values. Furthermore, if $\delta_{i,j} > |d_{i,j}|$ we can always choose the minimal $\delta_{i,j}=|d_{i,j}|$ and remain feasible. Thus we can construct the corresponding $\delta,\beta$ and $\gamma$ from a given $d$.
    
    Consequently, if we say that $d^*$ is optimal or feasible we mean that the point $(d^*,\delta^*,\beta^*,\gamma^*)$ is optimal or feasible when $\delta^*,\beta^*$ and $\gamma^*$ are determined as above. 
\end{remark}

\subsection{NP-hardness}
\label{subsec: NP hard}
We now elaborate on the NP-hardness of the problem \eqref{eq:trip}. For the one-dimensional case the authors concerned themselves in \cite{severitt2023efficient} with the weighted problem 
\begin{gather}\label{eq:trip-w}
\begin{aligned}
    \min_{d \in \mathbb{Z}^{N}}\quad & \sum_{i=1}^N  c_{i} d_{i} +  \sum_{i=1}^{N-1} \tilde \alpha_{i}  \vert  x_{i+1} + d_{i+1} - x_{i} - d_{i} \vert \\
    \text{s.t.}\quad
    & x_{i} + d_{i} \in \Xi \text{ for all } i \in [N]\quad\text{and}\quad \sum_{i=1}^N  h_{i} \vert  d_{i} \vert \le \Delta,
\end{aligned}
\tag{wTR-IP}
\end{gather} where $\tilde \alpha_{i}, h_{i} \in \mathbb{R}_{\geq 0}.$ 
The NP-hardness for $\tilde \alpha \equiv 1$ and $h \in \mathbb{N}^N$ was proven by a reduction from knapsack. It is, however, solvable by a pseudo-polynomial algorithm.
We now motivate that we conjecture that in the two-dimensional case we can not find a pseudo-polynomial algorithm for \eqref{eq:trip} even if
$\Xi = \{0,1\}$, that is we conjecture that the problem \eqref{eq:trip} is strongly NP-hard. To this end we introduce the well-studied minimum bisection problem. 

\paragraph{Minimum bisection problem (MBP):} Given a graph $G(V,E)$, (MBP) seeks a partition into two sets $S, V\setminus S$ such that $|S|, |V \setminus S| \leq \left \lceil |V| / 2 \right \rceil$ which minimizes the cardinality of the set of cut edges $C \coloneqq \{(v,w) \in E | v \in  S,\ w \in V \setminus S  \}$. $|C|$ is called the bisection width.

(MBP) is NP-hard
for unit disc graphs, see \cite{DIAZ201783}. For planar graphs \cite{Papadimitriou1990TheBW} conjecture
that (MBP) remains NP-hard. There exists a polynomial reduction to the minimum bisection problem on subgraphs of the grid
$\mathbb{Z} \times \mathbb{Z}$ with an arbitrary 
number of holes (MBFH), see \cite{Papadimitriou1990TheBW}. So if (MBFH) is polynomially solvable so is (MBP) on planar 
graphs. We now give a polynomial reduction from (MBFH) to the binary \eqref{eq:trip} problem. Our reduction is
inspired by the ideas in \cite{Papadimitriou1990TheBW}.

In order to prove the reduction we need an auxiliary lemma, which is shown in Appendix \ref{sec: proof box formula}.
\begin{lemma}
\label{lemma: box formula}
    Let $G(V,A)$ be a $\tilde n \times \tilde m$ rectangular subgraph of the infinite grid $\mathbb{Z} \times \mathbb{Z}$. Then for a subset $U \subset G$ of size $K$ it holds that \[ | \partial U | \geq \min \{ \lfloor \sqrt{K} \rceil + \lceil \sqrt{K} \rceil, \tilde n, \tilde m, \lfloor \sqrt{\tilde n \tilde m - K} \rceil +\lceil \sqrt{\tilde n \tilde m - K} \rceil \}\] where  $\partial U =  \{ (v,w) \in A \ | \ v \in U, \ w \in G \setminus U \} $ is the set of cut edges. 
\end{lemma}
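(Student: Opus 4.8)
The plan is to prove this via a discrete isoperimetric inequality on the grid, essentially counting the "width" of $U$ in each of the two coordinate directions. The key observation is that $\partial U$ contains all horizontal edges crossing between $U$ and its complement within a given row, and all vertical edges crossing within a given column; so I want to lower-bound the number of such crossings by projecting $U$ onto the two axes. Concretely, for each row index $r \in [\tilde m]$ let $U_r$ be the set of cells of $U$ in that row, and similarly $U^c$ for columns. If $U_r$ is neither empty nor all of the row, then the row contributes at least $2$ to $|\partial U|$ when $U_r$ is an interior sub-block, or at least $1$ when it touches the boundary — but to get a clean bound I would instead argue that the number of rows meeting $U$ times (something) plus the number of columns meeting $U$ times (something) bounds $|\partial U|$ from below, and simultaneously $|U| = K$ is bounded above by (rows meeting $U$) $\times$ (columns meeting $U$). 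This product-versus-sum tension is exactly what produces the $\sqrt{K}$ terms via AM–GM.

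The first concrete step is to introduce $a \coloneqq |\{ r : U_r \neq \emptyset\}|$, the number of rows that $U$ occupies, and $b \coloneqq |\{ c : U^c \neq \emptyset \}|$, the number of columns. Then $K = |U| \le a b$. Next I would show $|\partial U| \ge a + b$ in the "generic" case: each occupied row must contain at least one horizontal boundary edge of $U$ unless that row is entirely filled and spans the full width $\tilde n$ (in which case it contributes vertical edges at the top/bottom of the occupied block instead), and symmetrically for columns; a careful case distinction shows the crossing edges in rows plus those in columns total at least $a + b$ unless $U$ already fills a full strip of rows or columns, which is where the $\tilde n$ and $\tilde m$ terms in the minimum come from. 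Then, using $ab \ge K$ together with $a,b \le \max\{\tilde n, \tilde m\}$ and minimizing $a + b$ subject to $ab \ge K$, AM–GM gives $a + b \ge 2\sqrt{K}$; rounding to integers and arguing that $a+b$ is an integer $\ge \lceil 2\sqrt K\rceil = \lfloor \sqrt K\rceil + \lceil\sqrt K\rceil$ yields the first term. The complementary term $\lfloor\sqrt{\tilde n\tilde m - K}\rceil + \lceil\sqrt{\tilde n\tilde m - K}\rceil$ comes from applying the identical argument to $G \setminus U$, since $\partial U = \partial(G\setminus U)$.

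The step I expect to be the main obstacle is the clean bookkeeping of the boundary-edge count in the degenerate cases — precisely, showing that whenever $U$ does not occupy a full block of rows spanning the whole width (resp. a full block of columns spanning the whole height), every occupied row (resp. column) genuinely contributes a crossing edge, so that $|\partial U| \ge a + b$, and otherwise one of $\tilde n$ or $\tilde m$ already bounds $|\partial U|$ from below. One has to be careful that a row can be "fully occupied" (all $\tilde n$ cells in $U$) yet still contribute boundary edges through the vertical edges to the rows above and below, and that these vertical contributions are not double-counted with the per-column count; isolating a genuinely minimal configuration (which should be a roughly square sub-block, or a full strip) and checking it attains the stated bound is the delicate part. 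The rounding identity $\lceil 2\sqrt K \rceil = \lfloor\sqrt K\rceil + \lceil\sqrt K\rceil$ for integer $K$ is elementary (case split on whether $K$ is a perfect square, and otherwise on the fractional part of $\sqrt K$) and I would relegate it to a one-line remark.
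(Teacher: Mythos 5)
Your plan is correct, but it takes a genuinely different route from the paper. The paper argues geometrically: it first bounds the cut on the infinite grid $\mathbb{Z}\times\mathbb{Z}$ (reducing to connected $U$ by a shifting argument, then counting crossings of straight lines through the bounding box of $U$, giving roughly $4\sqrt{K}$), halves this for the quadrant $\mathbb{N}\times\mathbb{N}$ where $U$ can hug a corner, and finally treats the rectangle by distinguishing whether $K$ (or $\tilde n\tilde m-K$) is below $\min\{\tilde n,\tilde m\}$ or whether a full horizontal/vertical strip is optimal. Your projection argument replaces all of this by counting, per occupied row and per occupied column, one crossing edge, combined with $K\le ab$, AM--GM, integrality, and complementation; it needs no connectivity or shifting step and is arguably tighter in its bookkeeping, while the paper's route supplies the geometric picture of optimal shapes (near-squares in a corner versus full strips) that the authors reuse informally later. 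The step you flag as delicate closes cleanly and your worry about double counting is not an issue if you organize the cases by full lines rather than by blocks: if $U$ contains no full row and no full column, every occupied row yields a horizontal and every occupied column a vertical boundary edge, and these edge sets are disjoint, so $\lvert\partial U\rvert\ge a+b\ge\lceil 2\sqrt K\,\rceil=\lfloor\sqrt K\rceil+\lceil\sqrt K\,\rceil$; if $U$ contains a full row but no full column, then every column is occupied and non-full, so $\lvert\partial U\rvert\ge$ the number of columns $\ge\min\{\tilde n,\tilde m\}$, and symmetrically with rows and columns exchanged; if $U$ contains both a full row and a full column, then $G\setminus U$ contains neither (each row and column of the complement meets $U$'s full column resp.\ full row), so the first case applied to $G\setminus U$ gives the term $\lfloor\sqrt{\tilde n\tilde m-K}\rceil+\lceil\sqrt{\tilde n\tilde m-K}\,\rceil$ (the empty complement being trivial). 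Together with your rounding identity $\lceil 2\sqrt K\,\rceil=\lfloor\sqrt K\rceil+\lceil\sqrt K\,\rceil$ for integer $K$, this yields the stated minimum in every case.
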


\begin{theorem}
    There exists a polynomial reduction from (MBFH) to \eqref{eq:trip} with a binary control and rational entries in $c$ bounded by $-9$ and $5$ which become integer values polynomial in the size of the grid graph if multiplied by $n^4$.
\end{theorem}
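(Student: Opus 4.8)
The plan is to turn a given \eqref{eq:trip}{}-style instance into a \textbf{cut-counting instance}. Given an (MBFH) instance, i.e.\ an induced subgraph $G=(V,A)$ of the infinite grid with holes, sitting inside a bounding box of size $\tilde n\times\tilde m$, I would build a binary \eqref{eq:trip} instance on a grid of size $N\times M$ equal to that bounding box (enlarged by at most one extra cell to fix the parity of $|V|$, see below). Set $\Xi=\{0,1\}$ and $x\equiv 0$; then every feasible $d$ is the $0/1$ indicator $y:=d$ of a set $S$ of cells, the capacity constraint reads $\sum_{i,j}\lvert d_{i,j}\rvert=|S|\le\Delta$, and the total variation term equals $\alpha$ times the number of grid edges with differently labelled endpoints. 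Split the cells into \emph{active} cells (exactly the vertices of $G$; since $G$ is an induced subgraph of the grid, the active--active grid edges are precisely the edges of $A$) and \emph{frozen} cells (the holes, the margin, and anything outside $G$).

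Next I would design $c$ and $\alpha$. For every frozen cell I set $c_{i,j}$ to a fixed positive value strictly larger than $4\alpha$; since flipping a single cell changes the total variation by at most $4\alpha$ while changing the linear part by the whole cost, every optimal solution keeps all frozen cells at $0$. With frozen cells fixed to $0$, each grid edge joining an active cell $v$ to a frozen cell contributes $\alpha\,y_v$ to the objective, and I cancel this bias by subtracting $\alpha$ from $c_v$ for every frozen neighbour of $v$. Finally I subtract a uniform ``reward'' $\epsilon>0$ from every active $c_v$. After these bookkeeping steps the objective of any solution with all frozen cells at $0$ equals exactly $\alpha\,\mathrm{cut}_G(S)-\epsilon|S|$, where $\mathrm{cut}_G(S)$ is the number of edges of $G$ cut by $S$. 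Choosing $\alpha$ small --- of order $\min\{1,1/(|A|+4)\}$, and of the form $\alpha=a/n^4$ with $a$ the largest admissible integer --- makes every $c_{i,j}$ lie in $[-9,5]$ (the value $5$ occurs at frozen cells, the value $-9$ in the worst case at an active cell with four frozen neighbours and the full reward), and makes $\alpha$, $\epsilon=\alpha|A|$, $\Delta$ and all $c_{i,j}$ integral after multiplication by $n^4$ and polynomially bounded in the grid size.

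Then I would match the optima. Putting $\Delta=\lceil|V|/2\rceil$ enforces $|S|\le\lceil|V|/2\rceil$, which is the ``upper half'' of the bisection balance condition. For odd $|V|$ I add one extra \emph{isolated} active cell (placed with four frozen neighbours, so its edge contributions cancel and it carries only the reward $-\epsilon$); one checks that adding an isolated vertex leaves the bisection width of $G$ unchanged. With the reward satisfying $\epsilon=\alpha|A|\ge\alpha b$ for the bisection width $b$ of $G$, a short estimate shows $\alpha\,\mathrm{cut}_G(S)-\epsilon|S|\ge \alpha b-\epsilon\lceil|V|/2\rceil$ for every feasible $S$, with equality attained by a balanced minimum cut. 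Hence the optimal value of the constructed \eqref{eq:trip} instance equals $\alpha b-\epsilon\lceil|V|/2\rceil$ (there is no further additive constant because all total-variation contributions among frozen edges vanish), so $G$ has bisection width at most $K$ if and only if the \eqref{eq:trip} instance has optimal value at most $\alpha K-\epsilon\lceil|V|/2\rceil$. Since the construction is plainly computable in time polynomial in $|V|$, this is the required reduction.

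The main obstacle, and the reason for the peculiar cost structure, is that \eqref{eq:trip} carries only a single ``$\le$''-type budget while the bisection balance condition is two-sided: the budget gives $|S|\le\lceil|V|/2\rceil$, but the lower bound $|S|\ge\lfloor|V|/2\rfloor$ has to be produced indirectly by the linear reward term, and making that argument go through forces the reward to dominate $\alpha$ times the largest possible cut value, hence forces $\alpha$ itself to be tiny --- this is exactly what the scaling by $n^4$ and the explicit bounds $-9$ and $5$ encode. The two secondary technical points are the systematic cancellation of every edge incident to a hole or to the margin (so that the induced objective is genuinely $\alpha\,\mathrm{cut}_G(S)-\epsilon|S|$ rather than a boundary-biased variant), and the parity fix for odd $|V|$ together with the verification that it does not alter the bisection width.
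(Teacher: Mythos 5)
Your reduction is correct, but it is a genuinely different construction from the paper's. The paper keeps $\alpha=1$ and blows each vertex of $G$ up into a square of $n^4$ nodes (plus connector nodes $V_2$ and filler nodes $V_3$ with cost $5$), gives each square node a tiny reward $-5/n^4$, and sets $\Delta=\tfrac{n^5}{2}+4n$; correctness then hinges on the isoperimetric bound of Lemma \ref{lemma: box formula} to rule out partially filled squares and on a capacity-counting argument to force exactly $n/2$ full squares, after which cut lines between squares count the original cut edges. You instead work directly on the bounding box of $G$: freeze all non-vertices with cost $5>4\alpha$, cancel the frozen-edge bias through the linear costs, give every vertex a uniform reward $\epsilon=\alpha\lvert A\rvert$, set $\Delta=\lceil\lvert V\rvert/2\rceil$, and enforce the missing lower half of the balance condition by making the per-unit reward exceed $\alpha$ times any possible cut, so that the optimum is exactly $\alpha b-\epsilon\lceil\lvert V\rvert/2\rceil$; your stratification argument (reward is constant on the stratum $\lvert S\rvert=\Delta$, and losing one unit of capacity costs at least $\epsilon\ge\alpha b$) is sound, as are the freezing and parity-fix steps. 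What each approach buys: yours is shorter, needs no blow-up and no isoperimetric lemma, and produces much smaller instances, at the price of letting the TV weight shrink like $a/n^4$ relative to the fixed frozen cost $5$ (so the hard family lives in a degenerate $\alpha$-regime, whereas the paper's family has $\alpha=1$ with costs perturbed only at resolution $1/n^4$, which matches the corollary's phrasing ``including $\alpha=1$''); the paper's Lemma \ref{lemma: box formula} also does double duty later for the bounding-box cuts in Section \ref{sec:bounding_box_cut}. Minor points you should tighten in a full write-up: state the freezing argument for an optimal solution (a single flip of any frozen cell at $1$ changes the TV by at most $4\alpha<5$, contradicting optimality), reserve a small frozen margin so the parity cell really has only frozen neighbours, and note explicitly that (MBFH) instances are vertex-induced subgraphs of the grid with polynomially bounded bounding box, which both your reduction and the paper's implicitly use.
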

\begin{proof} {Proof}
    Without loss of generality we assume that the subgraph of the grid $G$ has an even number of nodes and more than $2$ to be nontrivial. Otherwise, we add a single node not connected to the rest of the graph and the following construction still works. In particular, $n \coloneqq |G| \geq 4$ is assumed. 
    
    We construct a new grid for which an optimal solution to \eqref{eq:trip} corresponds to an optimal solution to the bisection problem on the original graph.  
    We replace every node in the graph by a square of $n^4$ nodes and connect a square with a straight line to another square if the corresponding nodes in $G$ were connected by an edge. The straight line is connected to the middle of the sides of the squares on which sides of the nodes the edge in the original graph was adjacent to. We choose the node which lies closer to the bottom or the left of the square to decide the ties. The straight lines each contain one node with two adjacent edges which connect to one square respectively. This ensures that nodes of two different squares do not share an edge. Thus we obtain a connected graph with $n$ squares of size $n^4$. In total we connect the squares by less than $4 n$ straight lines each containing one node. We call the set of nodes in a square $V_1$ and the connecting nodes $V_2$. We now add nodes and the corresponding edges until we obtain a square grid with a size polynomially bounded in $n$ and in which every node of $V_1$ has exactly $4$ neighbouring nodes. We call the newly added nodes the set $V_3$. The constructed grid is visualized exemplarily in Figure \ref{fig: np proof}.
    We set $x \equiv 0.$ For a node $v$ in $V_1$ (nodes in a square) we set the cost $c(v) = -\frac{5}{n^4} - |\{(v,w)| w \in V_3 \}|$ while we set $c(v)= - |\{(v,w)| w \in V_3 \}| = -2$ for $v \in V_2$. For $v$ in $V_3$ we set $c(v) = 5$.  We set $\alpha=1$. This construction ensures that $d_v=0$ for all $v \in V_3$ as the increased objective, specifically $c_v d_v$, outweighs any possible reduction in the jumps regarding this node. Furthermore, the cost of a node $v \in V_2$ is constructed such that if $d_v=1$ the term $c_v d_v$ cancels out the jumps to adjacent nodes in $V_3$. The same holds true for the second term in the definition of the cost term for nodes in $V_1$. We choose $\Delta = \frac{n^5}{2}+ 4n$. This means that half the squares can be set to $1$ as well as all the nodes in $V_2$ which are not part of squares. When we say that we set a node $v$ to $1$ we mean that $d_v=1$.
    
    We now show that it is required for optimality for \eqref{eq:trip} for the feasible point to set half of the squares as well as the connecting nodes in $V_2$ to $1$.  These feasible points of \eqref{eq:trip} then corresponds to feasible points of the minimum bisection and minimizing the cost function of \eqref{eq:trip} is equivalent to minimizing the cut lines between the squares which correspond to minimizing the cut edges in the original subgraph. We start by showing that only feasible points of \eqref{eq:trip} can be optimal which set the nodes of $\frac{n}{2}$ entire squares to $1$. 
    
    We first show that it is suboptimal to set more than $0$ but less than $n^4-25$ nodes in a square to $1$. To this end we know that the formula $\min \{ \lfloor \sqrt{K} \rceil + \lceil \sqrt{K} \rceil, n^2,  \lfloor \sqrt{ n^4 - K} \rceil +\lceil \sqrt{ n^4 - K} \rceil \}$ provides a lower bound for the amount of jumps in the square if we set $K$ nodes to $1$. For $5 \leq K \leq n^4 -25$ the inequality 
    $ - \frac{5K}{n^4} + \min \{ \lfloor \sqrt{K} \rceil + \lceil \sqrt{K} \rceil, n^2, \lfloor \sqrt{ n^4 - K} \rceil +\lceil \sqrt{n^4 - K} \rceil \}  > 4  $ holds which is easy to check due to the concavity of the left side of the inequality. The left side is a lower bound for any feasible point which sets between $5$ and $n^4-25$ nodes in the square to $1$ while the right side is an upper bound for setting all nodes to $0$. We remind the reader that the second term $-|\{(v,w) | w \in V_3 \}|$ in the cost for a node $v$ in the square cancels out the jumps to adjacent nodes in $V_3$.  It follows that the cost occurring in the square is higher than a possible reduction in jumps of at most $4$ as there are at most $4$ nodes from $V_2$ connected to this square. Because these nodes in $V_2$ are at least $n^2-1$ nodes apart from each other the same argumentation that the cost in the square outweighs the cost reduction outside the square holds for $0<K<5$. We would need one separate component for each jump from a square node to a node in $V_2$ we want to eliminate which would instead produce at least two jumps in the square. Thus a feasible point can only be optimal if for all squares either all the nodes in the square are set to $0$ or at least $n^4-25$ nodes are set to $1$.

    We note that it takes a capacity of at least $(n^4-25) (\frac{n}{2}+1) = \frac{n^5}{2} + n^4 - \frac{25n}{2}-25$ for more than $\frac{n}{2}$ squares to set at least $n^4-25$ nodes to $1$ which exceeds the capacity bound of $\frac{n^5}{2} + 4n$ for $n \geq 4$. Thus a feasible point for \eqref{eq:trip} can only be optimal if exactly $\frac{n}{2}$ squares are set to $1$, because setting less squares to $1$ would be suboptimal as can be seen by the cost function (setting a whole square to $1$ reduces the total cost by at least $1$ even if all connecting nodes in $V_2$ are set to $0$).

    A feasible point which sets $\frac{n}{2}$ entire squares to $1$ minimizes the objective value inside the squares. If also the connecting nodes in $V_2$ in between these squares are set to $1$ the objective is further decreased outside of the squares. It does not effect optimality if a node in $V_2$ connecting a square which is set to $0$ and a square set to $1$ is itself set to $0$ or to $1$ due to the construction of the costs. Setting any other node, meaning a node in $V_1$ or $V_3$, to $1$ would increase the objective as previously shown. So we now need to choose the best feasible point from the set of feasible points which adhere to the described conditions in order to obtain an optimal solution for \eqref{eq:trip}. Thus the feasible point which sets exactly $\frac{n}{2}$ squares as well as the connecting nodes to $1$ is optimal if the amount of straight lines to the remaining squares is minimized which corresponds to the cut edges for the minimum bisection. The objective value is $-\frac{5n}{2} + C$ where $C$ is the minimal bisection of the original subgraph. Thus we obtain the desired value by adding $\frac{5n}{2}$ to the objective value of the optimal solution to \eqref{eq:trip}. 
\end{proof}
\begin{corollary}
    If (MBFH) is NP-hard and $NP\neq P$ then no (pseudo-) polynomial algorithm for \eqref{eq:trip} 
    and the two-dimensional variant of \eqref{eq:trip-w} exists.
\end{corollary}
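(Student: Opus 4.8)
The plan is to upgrade the polynomial reduction of the Theorem to a reduction in the strong (pseudo-polynomial) sense and then invoke the folklore fact that a strongly NP-hard problem admits no pseudo-polynomial algorithm unless $\mathrm{P}=\mathrm{NP}$. Concretely, I would take an arbitrary instance of (MBFH), i.e.\ a subgraph $G$ of $\mathbb{Z}\times\mathbb{Z}$ with holes and $n\coloneqq|G|$ nodes, run the construction of the Theorem, and make explicit that \emph{every} datum of the resulting binary \eqref{eq:trip} instance is bounded by a polynomial in $n$: the auxiliary grid has side length polynomial in $n$ (hence $O(\mathrm{poly}(n))$ cells), $\alpha=1$, the capacity is $\Delta=\tfrac{n^5}{2}+4n$, and every entry of $c$ lies in $[-9,5]$ and turns into an integer of absolute value at most $9n^4$ after multiplying the objective by $n^4$. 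Since multiplying the objective by the positive constant $n^4$ changes neither feasibility nor the minimizers, this yields an equivalent instance of \eqref{eq:trip} with integral data whose encoding length \emph{and} whose largest number are polynomial in $n$ — precisely the property needed to speak of a pseudo-polynomial algorithm.

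Next I would feed this scaled instance to a hypothetical pseudo-polynomial algorithm for \eqref{eq:trip}: by definition its running time is polynomial in the encoding length and in the magnitude of the largest number of the input, both polynomial in $n$, so it terminates in time polynomial in $n$. By the correspondence established in the proof of the Theorem — the optimal value equals $-\tfrac{5n}{2}+C$ with $C$ the bisection width of $G$, and an optimal $d$ sets exactly $\tfrac n2$ squares to one — one reads off a minimum bisection of $G$ in polynomial time, so (MBFH) would lie in $\mathrm{P}$, contradicting ``(MBFH) NP-hard'' together with $\mathrm{P}\neq\mathrm{NP}$. A polynomial-time algorithm is a special case of a pseudo-polynomial one, hence it is excluded as well; this is what ``no (pseudo-)polynomial algorithm'' means, and it says exactly that binary \eqref{eq:trip} is strongly NP-hard. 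The conclusion for \eqref{eq:trip-w} follows in the same way because \eqref{eq:trip-w} subsumes \eqref{eq:trip} upon specialising the weights $\tilde\alpha,h$ to one, so a (pseudo-)polynomial algorithm for \eqref{eq:trip-w} would provide one for \eqref{eq:trip}.

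I expect the only real work — and the only place an error could creep in — to be the bookkeeping that certifies the reduction is ``strong'': one has to trace through the construction in the Theorem that the grid dimensions, $\Delta$, and the $n^4$-scaled cost coefficients are all genuinely polynomially bounded in $n$, with the denominator $n^4$ being the sole source of large integers, and one should state cleanly the equivalence between ``strongly NP-hard'' and ``no pseudo-polynomial algorithm unless $\mathrm{P}=\mathrm{NP}$'' (and, if one wants to be fully precise about \eqref{eq:trip-w}, note which weighted variant is meant so that the specialisation argument literally applies). No genuinely new combinatorics is needed beyond what the Theorem already supplies; the corollary drops out once the size bounds are recorded.
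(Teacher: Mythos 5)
Your argument is correct and takes essentially the same route as the paper: the paper's proof is exactly the bookkeeping you describe, namely that after multiplying by $n^4$ the costs become integers of magnitude polynomial in $n$ and that $\Delta = \tfrac{n^5}{2}+4n$ is polynomial in the grid size, so a pseudo-polynomial algorithm applied to the instances from the Theorem would solve (MBFH) in polynomial time, contradicting its NP-hardness under $P\neq NP$. Your closing caveat about which weighted variant \eqref{eq:trip-w} is meant (as stated it is the one-dimensional problem, so the ``set the weights to one'' specialisation does not literally apply) is a fair point that the paper itself glosses over equally briefly.
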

\begin{proof} {Proof}
    The weights (including $\alpha=1$) are all integer values polynomial in $N$ if we multiply by $n^4$ and the trust-region radius is a polynomial in the size of the subgraph of the grid. 
\end{proof}

\begin{figure} 
    \centering
    \includegraphics[scale = 0.45]{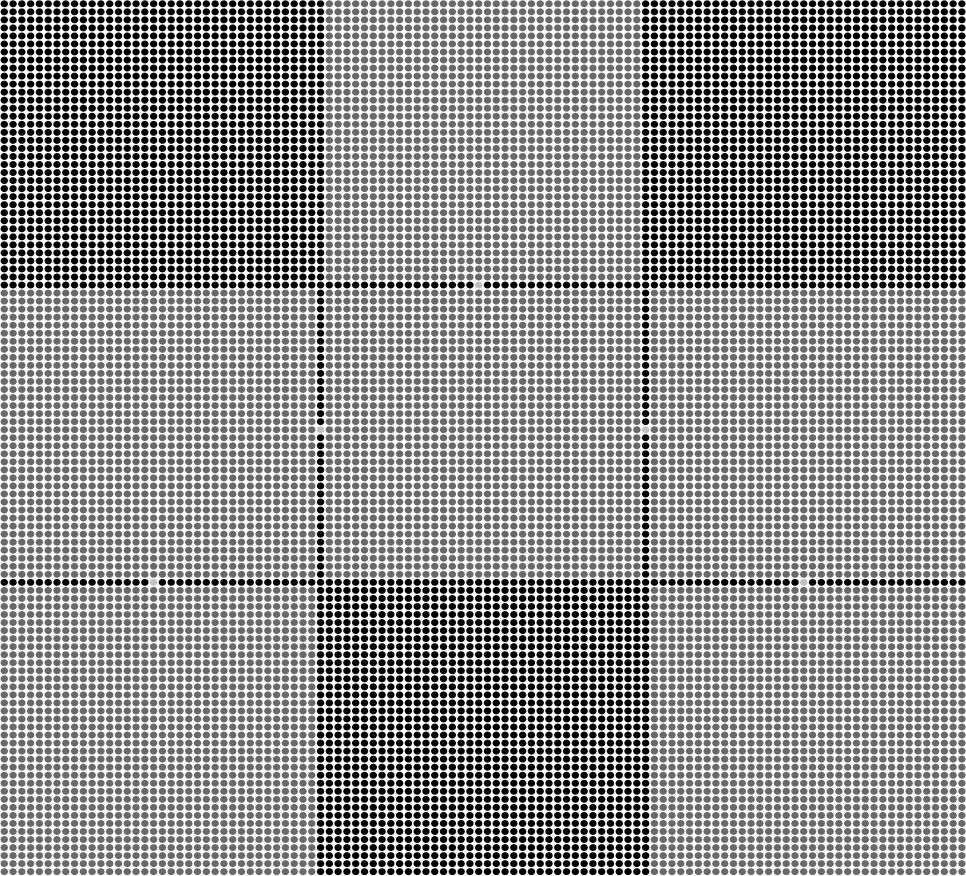}
    \caption{The original graph consisted of $6$ nodes. Each node becomes one square of $36 \times 36$ nodes in $V_1$. 
    	The set $V_3$ contains the black nodes.
    	The set $V_1$ is illustrated in dark grey. Two squares are connected by a node in $V_2$ if the corresponding nodes in the original graph were connected. 
    	The set $V_2$ contains the light grey nodes on the interfaces between these squares of nodes of $V_1$.}
    \label{fig: np proof}
\end{figure}

\subsection{Relaxation of the capacity constraint}
\label{subsec: relax cap}
We use a Lagrangian relaxation to move the capacity constraint as a penalty term into the objective. We will show that the resulting problem is polynomially solvable and provides a conditional $p$-approximation. As noted in the beginning of this section, we refer to this relaxation as the \emph{Lagrangian relaxation} for the remainder of the paper. We end up with the problem formulation
\begin{gather}\label{eq:lr}
\begin{aligned}
    \max_{\mu \geq 0} \min_{d,\delta,\beta,\gamma}\quad & \sum_{i=1}^N \sum_{j=1}^M c_{i,j} d_{i,j} + \alpha  \left (\sum_{j=1}^{M} \sum_{i=1}^{N-1} \beta_{i,j} +  \sum_{i=1}^{N} \sum_{j=1}^{M-1} \gamma_{i,j} \right )  + \mu \left ( \sum_{i=1}^N \sum_{j=1}^M  \delta_{i,j} - \Delta \right ) \\
    \text{s.t.}\quad
    & (d,\delta,\beta,\gamma) \in P \text{ and } d \in \mathbb{Z}^{N \times M}.
\end{aligned}
\tag{LR-$\Delta$}
\end{gather}
The parameter $\mu \geq 0$ penalizes the capacity consumption and we can ensure that an optimal solution adheres to the capacity constraint by choosing $\mu$ large enough. We will see in Subsection \ref{subsec: connect graph} that for a fixed $\mu$ the inner minimization problem can be solved in polynomial time and the optimal $\mu$ can be determined with a binary search.
An optimal solution to this relaxation provides, based on the used capacity, a $p$-approximation for the problem 

\begin{gather}\label{eq:trip-og}
\begin{aligned}
    \min_{d \in \mathbb{Z}^{N\times M}}\quad & \sum_{i=1}^N \sum_{j=1}^M c_{i,j} d_{i,j} + \alpha  \sum_{j=1}^{M} \sum_{i=1}^{N-1}  \vert x_{i+1,j} + d_{i+1,j} - x_{i,j} - d_{i,j} \vert \\
    &+ \alpha  \sum_{i=1}^{N} \sum_{j=1}^{M-1}  \vert x_{i,j+1} + d_{i,j+1} - x_{i,j} - d_{i,j} \vert\\
    &- \alpha  \sum_{j=1}^{M} \sum_{i=1}^{N-1}  \vert x_{i+1,j}   - x_{i,j} \vert 
    - \alpha  \sum_{i=1}^{N} \sum_{j=1}^{M-1}  \vert x_{i,j+1} - x_{i,j}  \vert\\
    \text{s.t.}\quad
    & x_{i,j} + d_{i,j} \in \Xi \text{ for all } i \in [N], j \in [M]\quad\text{and}\quad \sum_{i=1}^N \sum_{j=1}^M  \vert  d_{i,j} \vert \le \Delta,
\end{aligned}
\tag{OG-TR-IP}
\end{gather}
which is the problem \eqref{eq:trip} before dropping the last two constant terms corresponding to $-\alpha \TV(v)$ in \eqref{eq:tr} from the objective. A $p$-approximation guarantee for this problem, which is the negative predicted reduction in the trust-region algorithm, allows to use feasible points satisfying the $p$-approximation in similar ways as Cauchy points instead of optimal points in trust-region algorithms while retaining the convergence properties. For sake of clarity we now define $ \omega(d) \coloneqq \alpha  \sum_{j=1}^{M} \sum_{i=1}^{N-1}  \vert x_{i+1,j} + d_{i+1,j} - x_{i,j} - d_{i,j} \vert + \alpha  \sum_{i=1}^{N} \sum_{j=1}^{M-1}  \vert x_{i,j+1} + d_{i,j+1} - x_{i,j} - d_{i,j} \vert $ and $\bar C \coloneqq \alpha  \sum_{j=1}^{M} \sum_{i=1}^{N-1}  \vert x_{i+1,j}   - x_{i,j} \vert  + \alpha  \sum_{i=1}^{N} \sum_{j=1}^{M-1}  \vert x_{i,j+1} - x_{i,j} \vert .$

\begin{theorem}[Conditional $p$-approximation]
\label{theorem: cond p}
    Let $( \bar d, \bar \delta, \bar \beta, \bar \gamma,  \bar \mu)$ be optimal for \eqref{eq:lr} and let $d^*$ be optimal for \eqref{eq:trip-og}. If $\Delta \geq \sum_{i=1}^N \sum_{j=1}^M \bar \delta_{i,j} \geq p \Delta$ for $p \in (0,1]$ then it holds that 

    \[ c^T \bar d + \omega(\bar d) - \bar C \leq p(c^T d^* + \omega(d^*) - \bar C)       . \]
\end{theorem}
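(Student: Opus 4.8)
The plan is to use nothing more than the minimality of $(\bar d,\bar\delta,\bar\beta,\bar\gamma)$ in the inner minimization of \eqref{eq:lr} at the fixed multiplier $\bar\mu$, tested against two specific competitors: the optimizer $d^*$ of \eqref{eq:trip-og} and the trivial step $d=0$. First I would reduce the inner problem to a problem in $d$ alone. Since $\alpha>0$ and $\bar\mu\ge 0$, all coefficients of $\beta,\gamma,\delta$ in the inner objective are nonnegative, so by the reasoning of Remark~\ref{remark:sol construction} an optimal inner point has $\beta,\gamma,\delta$ at their minimal feasible values, i.e.\ $\sum_{i,j}\bar\delta_{i,j}=\sum_{i,j}|\bar d_{i,j}|$ and $\alpha(\sum_{i,j}\bar\beta_{i,j}+\sum_{i,j}\bar\gamma_{i,j})=\omega(\bar d)$, and likewise if we pair $d^*$ (resp.\ $0$) with its minimal auxiliary variables. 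Writing $f(d)\coloneqq c^T d+\omega(d)-\bar C$ for the objective of \eqref{eq:trip-og}, and using $\omega(0)=\bar C$ together with feasibility of $d^*$ for \eqref{eq:trip-og} (hence $x+d^*\in\Xi^{N\times M}$ and $\sum_{i,j}|d^*_{i,j}|\le\Delta$), minimality of $\bar d$ at $\mu=\bar\mu$ then yields, after cancelling the common terms $\bar C$ and $-\bar\mu\Delta$,
\begin{align}
 f(\bar d)+\bar\mu\Big(\textstyle\sum_{i,j}|\bar d_{i,j}|-\Delta\Big) &\;\le\; f(d^*)+\bar\mu\Big(\textstyle\sum_{i,j}|d^*_{i,j}|-\Delta\Big)\;\le\; f(d^*), \notag\\
 f(\bar d)+\bar\mu\,\textstyle\sum_{i,j}|\bar d_{i,j}| &\;\le\; 0, \notag
\end{align}
where in the first line I used $\sum_{i,j}|d^*_{i,j}|\le\Delta$ and $\bar\mu\ge 0$, and the second line comes from the competitor $d=0$.

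The key step is to form the convex combination of these two inequalities with weights $p$ and $1-p$: multiplying the first by $p$, the second by $1-p$, and adding gives
\[
 f(\bar d)+\bar\mu\Big(\textstyle\sum_{i,j}|\bar d_{i,j}|-p\Delta\Big)\;\le\; p\,f(d^*).
\]
By hypothesis $\sum_{i,j}|\bar d_{i,j}|=\sum_{i,j}\bar\delta_{i,j}\ge p\Delta$, and $\bar\mu\ge 0$, so the bracketed correction term is nonnegative and may be dropped, which gives $f(\bar d)\le p\,f(d^*)$; substituting $f(\bar d)=c^T\bar d+\omega(\bar d)-\bar C$ and $f(d^*)=c^T d^*+\omega(d^*)-\bar C$ is exactly the assertion.

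I do not anticipate a substantive obstacle: the only points requiring care are the bookkeeping with the auxiliary variables (making sure, via Remark~\ref{remark:sol construction}, that $\sum_{i,j}\bar\delta_{i,j}$ really equals $\sum_{i,j}|\bar d_{i,j}|$, and that both $d^*$ and $0$ together with their minimal $\beta,\gamma,\delta$ are admissible inner competitors) and spotting that the weights $p$ and $1-p$ are the right ones to make the multiplier term collapse against the hypothesis $\sum_{i,j}\bar\delta_{i,j}\ge p\Delta$. Everything else is elementary cancellation. It is worth noting that optimality of $\bar\mu$ for the outer maximization is never used; it suffices that $\bar\mu\ge 0$ and that the induced inner minimizer consumes capacity in $[p\Delta,\Delta]$, which is exactly what the hypothesis encodes.
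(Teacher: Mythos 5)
Your argument is correct and rests on the same essential ingredients as the paper's proof: the inner Lagrangian optimality at $\bar\mu$ tested against the two competitors $d^*$ and $d=0$, together with $\bar\mu\ge 0$, $\sum_{i,j}|d^*_{i,j}|\le\Delta$, and the hypothesis $\sum_{i,j}\bar\delta_{i,j}\ge p\Delta$ (your identification $\sum_{i,j}\bar\delta_{i,j}=\sum_{i,j}|\bar d_{i,j}|$ is only forced when $\bar\mu>0$, but when $\bar\mu=0$ the correction term vanishes anyway, so this is harmless). The only difference is organisational: you combine the two inequalities directly with weights $p$ and $1-p$, whereas the paper argues by contradiction, treats $p=1$ separately, and multiplies by $p/(1-p)$; your direct version handles all $p\in(0,1]$ uniformly and correctly notes that outer optimality of $\bar\mu$ is never used.
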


\begin{proof} {Proof}
Let $(\bar{d},\bar{\delta},\bar{\beta},\bar{\gamma},\bar{\mu})$ be optimal for \eqref{eq:lr} and we note that $p \in (0,1]$ gives that
$\bar{d}$ is also feasible for \eqref{eq:trip-og}. 
We first prove the case $p=1$.
The inequality $c^T \bar{d} + w(\bar{d})-\bar C + \bar{\mu} (\sum_{i=1}^N \sum_{j=1}^M  \bar{\delta}_{i,j} - \Delta) \leq c^Td+w(d)-\bar C $
holds for every $d$ feasible for \eqref{eq:trip-og}.
If $\sum_{i=1}^N \sum_{j=1}^M \bar{\delta}_{i,j} = \Delta$ then the objective values of \eqref{eq:trip-og} and \eqref{eq:lr} coincide.

We now turn to the case $p \in (0,1).$
    We prove this result by way of contradiction and assume
     \begin{align}
          c^T \bar d + \omega(\bar d) - \bar C > p(c^T d^* + \omega(d^*) - \bar C)
          ,
          \label{eq: approx: contra}
     \end{align}
     where we note that both the left and right hand side of \eqref{eq: approx: contra} are non-positive because $d = 0$ is feasible
     for both optimization problems.
     Because $( \bar d, \bar \delta, \bar \beta, \bar \gamma, \bar \mu)$ is optimal for \eqref{eq:lr} it holds that 
     \begin{align}
          c^T \bar d + \omega(\bar d) - \bar C + \bar \mu \sum_{i=1}^N \sum_{j=1}^M \bar \delta_{i,j}   \leq c^T d^* + \omega(d^*) - \bar C  + \bar \mu \sum_{i=1}^N \sum_{j=1}^M  \delta^*_{i,j}.
          \label{eq: approx: lagrangeopt}
     \end{align}
     Using the two inequalities \eqref{eq: approx: contra} and \eqref{eq: approx: lagrangeopt} as well as $ \sum_{i=1}^N \sum_{j=1}^M \bar \delta_{i,j} \geq p \Delta$ and $\sum_{i=1}^N \sum_{j=1}^M  \delta^*_{i,j} \leq \Delta,$ we obtain that 
     $ (1-p) (c^Td^* + \omega(d^*) - \bar C) > \sum_{i=1}^N \sum_{j=1}^M  (\bar \delta_{i,j} - \delta^*_{i,j}) \geq - (1-p) \bar \mu \Delta. $
     After multiplication with $\frac{p}{1-p}>0$ we obtain that
     \begin{align}
         p (c^Td^* + \omega(d^*) - \bar C) > - \bar \mu p \Delta.
         \label{eq:approx1}
     \end{align}
The point $\tilde d \equiv 0$ is feasible for \eqref{eq:lr} with an objective value of $\bar C - \bar \mu\Delta $ and thus
$c^T \bar d + \omega(\bar d) - \bar C + \bar \mu (\sum_{i=1}^N \sum_{j=1}^M \bar \delta_{i,j} - \Delta) \leq  - \bar \mu \Delta$
holds.
From $\sum_{i=1}^N \sum_{j=1}^M \bar \delta_{i,j} - \Delta \geq (p-1) \Delta$ it follows that 
\begin{align}
    c^T \bar d + \omega(\bar d) - \bar C   \leq  - \bar \mu p \Delta.
    \label{eq:approx2}
\end{align}  
Combining \eqref{eq:approx1} and \eqref{eq:approx2} we obtain the contradiction $c^T \bar d + \omega(\bar d) - \bar C  < p (c^Td^* + \omega(d^*) - \bar C)$.
\end{proof}

\begin{remark}
Theorem \ref{theorem: cond p} provides a $p$-approximation if the trust region is used at least by a fraction $p$ at the optimal solution to
\eqref{eq:lr}. Intuitively, this holds when the LP-relaxation
is already integer-valued in many entries of $d$ and thus close to the solution to
\eqref{eq:lr}. Note that they always coincide in the integer entries of the
LP-relaxation, which follows from the proof of Theorem \ref{thm:equivalenceLrLp} below.
Since \eqref{eq:lr} can generally be solved much faster than
\eqref{eq:trip} or \eqref{eq:trip-og} respectively, this can be used to accept
steps faster within the superordinate trust-region algorithm although early
numerical experiments suggest that the possible improvements are not very significant.
\end{remark}

The following corollary restates the case $p=1$ and was proven in Proposition 14 in \cite{severitt2023efficient} for the one-dimensional case, now extended to the two-dimensional case.

\begin{corollary}
Let $(d^*,\delta^*,\beta^*,\gamma^*,\mu^*)$ be optimal for \eqref{eq:lr} and $\sum_{i=1}^N \sum_{j=1}^M  \delta^*_{i,j} = \Delta$.
Then $(d^*,\delta^*,\beta^*,\gamma^*)$ is optimal for \eqref{eq:ip}.
\end{corollary}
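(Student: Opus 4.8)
The plan is to derive the corollary directly from the $p=1$ case of Theorem~\ref{theorem: cond p}, whose proof already does the essential work; the only additional step is to translate the statement about \eqref{eq:trip-og} into one about \eqref{eq:ip}.

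First I would recall that \eqref{eq:trip-og} and \eqref{eq:ip} have the same feasible set in the $d$-variable, namely $\{d \in \mathbb{Z}^{N\times M} : x_{i,j}+d_{i,j}\in\Xi \text{ for all } i,j,\ \sum_{i,j}|d_{i,j}|\le\Delta\}$, and that by Remark~\ref{remark:sol construction}, for any such $d$ the value of the \eqref{eq:ip} objective at the associated tuple with $\beta,\gamma,\delta$ chosen minimally equals $c^Td + \omega(d)$, which differs from the \eqref{eq:trip-og} objective only by the constant $\bar C$. Hence minimizing \eqref{eq:ip} is equivalent to minimizing $d \mapsto c^Td+\omega(d)-\bar C$ over that common feasible set.

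Next, from the hypothesis $\sum_{i,j}\delta^*_{i,j}=\Delta$ and $|d^*_{i,j}| \le \delta^*_{i,j}$ it follows that $\sum_{i,j}|d^*_{i,j}|\le\Delta$, so $d^*$ is feasible for \eqref{eq:trip-og} (and for \eqref{eq:ip}), and I may invoke Theorem~\ref{theorem: cond p} with $p=1$ and $\bar d = d^*$ to obtain $c^Td^*+\omega(d^*)-\bar C \le c^Td+\omega(d)-\bar C$ for every $d$ feasible for \eqref{eq:trip-og}. By the equivalence above, $d^*$ together with minimally chosen $\delta,\beta,\gamma$ minimizes the \eqref{eq:ip} objective. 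Since at an optimum of \eqref{eq:lr} with $\alpha>0$ the components $\beta^*,\gamma^*$ are already forced to equal the corresponding absolute values, and $\delta^*$ enters \eqref{eq:ip} only through the capacity constraint, which it satisfies with equality, the given tuple $(d^*,\delta^*,\beta^*,\gamma^*)$ is itself feasible for \eqref{eq:ip} with objective value $c^Td^*+\omega(d^*)$ and is therefore optimal for \eqref{eq:ip}.

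Alternatively, one can argue in a self-contained way: fix $\mu=\mu^*$ in the inner minimization of \eqref{eq:lr}; optimality of $(d^*,\delta^*,\beta^*,\gamma^*)$ there, together with the complementarity $\mu^*\bigl(\sum_{i,j}\delta^*_{i,j}-\Delta\bigr)=0$, yields $c^Td^*+\omega(d^*) \le c^Td+\omega(d)+\mu^*\bigl(\sum_{i,j}|d_{i,j}|-\Delta\bigr)$ for every $d$ feasible for \eqref{eq:ip}, and the last term is $\le 0$ because $\mu^*\ge0$ and the capacity constraint holds. I do not expect a genuine obstacle here; the only thing that needs care is the bookkeeping of the constant $\bar C$ and of the auxiliary variables $\beta,\gamma,\delta$ — ensuring the comparison is carried out for the same minimal choice of these variables on both sides, and using the complementarity $\mu^*\bigl(\sum_{i,j}\delta^*_{i,j}-\Delta\bigr)=0$ that the equality $\sum_{i,j}\delta^*_{i,j}=\Delta$ provides.
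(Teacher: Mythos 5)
Your proposal is correct and follows essentially the same route as the paper: the paper obtains the corollary as the $p=1$ case of Theorem~\ref{theorem: cond p}, whose proof is exactly your ``self-contained'' argument (Lagrangian optimality at $\mu^*$ plus the fact that $\sum_{i,j}\delta^*_{i,j}=\Delta$ makes the penalty term vanish on the left and nonpositive on the right), with the constant $\bar C$ and the minimal choice of $\beta,\gamma,\delta$ handled as you describe. No gaps.
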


\subsection{Connection to efficient graph algorithms}
\label{subsec: connect graph}
In one dimension ($M=1$ or $N=1$) an optimal solution for \eqref{eq:trip} can be computed by means of a reformulation as a shortest-path problem, see \cite{severitt2023efficient}. This approach has pseudo-polynomial complexity because the size of the graph grows linear in the input value $\Delta$, but is polynomial if $\Xi$ is a fixed contiguous set of integers, where $\Delta$ can be bounded by $|\Xi|N$.

This approach can not be applied in two dimensions by traversing the underlying grid in a one-dimensional sequence, as either half of the terms modeling the total variation would have to be ignored or the size of the graph would have to grow exponentially to encode the necessary information of at least the previous $\min\{N,M\}$ graph layers to guarantee optimality, see Figure \ref{fig:traversal}.

\begin{figure}
    \centering
	\resizebox{0.35 \textwidth}{!}{%
	
	\begin{tikzpicture}[node distance={65mm}, thick, main/.style = {draw, circle}]  
	\node[main, minimum size = 2.25 cm] (1) {\Huge $1$};
	\node[main, minimum size = 2.25 cm] (2) [right of=1] {\Huge $4$};
    \node[main, minimum size = 2.25 cm] (3) [right of=2] {\Huge $7$};

    \node[main, minimum size = 2.25 cm] (4) [below of=1]{\Huge $2$};
	\node[main, minimum size = 2.25 cm] (5) [below of=2] {\Huge $5$};
    \node[main, minimum size = 2.25 cm] (6) [below of=3] {\Huge $8$};
    
    \node[main, minimum size = 2.25 cm] (7) [below of=4]{\Huge $3$};
	\node[main, minimum size = 2.25 cm] (8) [below of=5] {\Huge $6$};
    \node[main, minimum size = 2.25 cm] (9) [below of=6] {\Huge $9$};

    	\node[main, minimum size = 2.25 cm] (10) [right of=3] {\Huge $10$};
	\node[main, minimum size = 2.25 cm] (11) [right of=6] {\Huge $11$};
    \node[main, minimum size = 2.25 cm] (12) [right of=9] {\Huge $12$};
    
    \draw[red] (1) -- (2);
    \draw (1) -- (4);
    \draw (2) -- (3);
    \draw (2) -- (5);
    \draw (3) -- (6);
    \draw (4) -- (5);
    \draw (4) -- (7);
    \draw (5) -- (6);
    \draw (5) -- (8);
    \draw (6) -- (9);
     \draw (7) -- (8);
    \draw (8) -- (9);
    \draw (3) --(10);
    \draw(6) -- (11);
    \draw(9) --(12);
    \draw (10)--(11);
    \draw(11)--(12);
	\end{tikzpicture}
}
\caption{We use a shortest-path approach to determine the optimal control in each node in the order $1$ to $12$. To guarantee optimality we need to take the control value in node $1$ into account when choosing the control value in node $4$ in order to accurately model the red edge between the nodes. Thus we need to encode the last $\min\{N,M\}=3$ control values in a graph construction which means the size grows exponentially in $\min\{N,M\}$. If all row edges are ignored, the resulting problem can be solved by a pseudo-polynomial algorithm in the same manner as the one-dimensional \eqref{eq:trip}.}
\label{fig:traversal}
\end{figure}
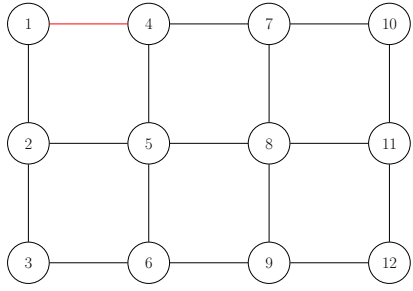
Instead, a capacity-constrained minimum $s$-$t$ cut seems like a better fit for the two-dimensional case because dropping the capacity constraint gives a polynomially solvable minimum cut problem.
One can reformulate the problem \eqref{eq:trip} as a capacity-constrained minimum $s$-$t$ cut problem on a graph $G = (V,A)$, which searches for a minimum $s$-$t$ cut $C$ with regards to a weight function $w_1: A \rightarrow \mathbb{R}_{\geq 0}$ and adheres to a capacity constraint $w_2(C) \leq \Delta$ with a capacity consumption function $w_2: A \rightarrow \mathbb{R}_{\geq 0}$. The graph construction is derived from \cite{Veksler99} which tackles a similar problem without a capacity constraint which in our case is modelled by $w_2$. The idea of using a minimum cut approach for energy minimization is common in image segmentation, see e.g.\ \cite{boykov2001}. The construction of the minimum $s$-$t$ cut problem after dropping
the capacity constraint mirrors the one in \cite{Veksler99}.
This also shows that the Lagrangian relaxation problem \eqref{eq:lr} is polynomially solvable for a fixed $\mu$.
Capacity-constrained global minimum cuts can be calculated efficiently as  bicriteria minimum cuts, see Theorem 2.4 in \cite{armon2006} . The bicriteria $s$-$t$ minimum cut problem is NP-hard in general, see Theorem 6 in \cite{Papadimitriou892068}, but their proof does not extend to grid graphs. 

\section{Structure of the polyhedron $P$}
\label{sec: structure}
In this section we will analyze the polyhedron described by the inequalities of \eqref{eq:ip} and \eqref{eq:lp} as well as the relationship between the two relaxations \eqref{eq:lp} and \eqref{eq:lr}.
The underlying polyhedron has a special structure which will later lead to valid cutting planes for \eqref{eq:ip}. To describe this special characteristic of the polyhedron we need the following definition.

\begin{definition}
We call the set $\Omega \coloneqq \{(i,j) | i,j \in [N] \}$ the \textbf{set of all index pairs}. Two index pairs $(i,j),(k,l) \in \Omega $ are \textbf{adjacent} if the index pair $(k,l)$ is equal to one of the index pairs $(i+1,j),(i-1,j),(i,j+1), (i,j-1)$. We say that an index pair $(k,l)$ is \textbf{adjacent to a subset of index pairs} if it is adjacent to at least one index pair in the subset but is not part of the subset itself.

A subset of index pairs $M \subset \Omega$ is a \textbf{connected component} if it contains only one element or for every index pair $(i,j) \in M$ at least one adjacent index pair is also in $M$ and for any two index pairs $(i,j),(k,l) \in M$ the condition $d_{i,j}+x_{i,j}=d_{k,l}+x_{k,l}$ holds. We refer to $d_{i,j}+x_{i,j}$ as the \textbf{value of the index pair} $(i,j)$. A connected component is \textbf{maximal} if for every $(i,j) \in M $ the condition $d_{i,j}+x_{i,j} \neq d_{o,p}+x_{o,p}$ for $(o,p) \in \{(i+1,j),(i-1,j),(i,j+1), (i,j-1)\} \cap (\Omega \setminus M)$ holds.
\end{definition}

Our goal is to show that all vertices of the polyhedron only take values in $\Xi$ for $x+d$ except in at most one maximal connected component. We will 
use this property to
construct a solution to \eqref{eq:lr} from a solution to \eqref{eq:lp} and vice versa. In Subsection \ref{subsec:cuts} we will construct cuts based on this property of the polyhedron. Before stating and proving the result,
we need an auxiliary lemma.

\begin{lemma}
    For $c_1,c_2>0$ and $k,\ell \in \mathbb{R}$ the problem 
\begin{gather}
    \min_{s_1,s_2}\enskip 0 \quad\textup{s.t.}\quad
    ks_1+\ell s_2=0 \textup{ and } -c_1 \leq s_1 \leq c_1 \textup{ and } -c_2 \leq s_2 \leq c_2
\end{gather} 
has a non-trivial optimal solution $s=(s_1,s_2)$ for which $-s$ is also an optimal solution.
\end{lemma}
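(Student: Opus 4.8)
The plan is to exploit the fact that the objective is the constant $0$, so that ``optimal'' and ``feasible'' mean the same thing, and that the feasible set is symmetric about the origin. Concretely, if $(s_1,s_2)$ satisfies $ks_1+\ell s_2=0$ and $-c_i\le s_i\le c_i$, then $(-s_1,-s_2)$ satisfies these constraints as well, because the equality constraint is homogeneous linear and the two box constraints are invariant under sign flip. Hence it suffices to produce a \emph{single} nonzero feasible point $s$; its negative $-s$ is then automatically feasible, and both are optimal since every feasible point attains the objective value $0$.

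To produce such a point I would argue as follows. The set $L\coloneqq\{(s_1,s_2)\in\mathbb{R}^2 : ks_1+\ell s_2=0\}$ is the kernel of the linear functional $(s_1,s_2)\mapsto ks_1+\ell s_2$ on $\mathbb{R}^2$, so $L$ is a linear subspace of dimension at least $1$; in particular it contains some vector $v\neq 0$. Since $c_1,c_2>0$, the box $[-c_1,c_1]\times[-c_2,c_2]$ is a neighbourhood of the origin, so there is $t>0$ small enough that $tv$ lies in this box. Then $s\coloneqq tv$ is nonzero, lies in $L$, and satisfies the box constraints, i.e.\ it is feasible, and so is $-s=-tv$.

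If one prefers an explicit construction over the subspace argument, the same conclusion follows by a short case distinction on whether $k$ and $\ell$ vanish: if $k=\ell=0$ take $s=(c_1,c_2)$; if exactly one of them is zero, say $\ell=0\neq k$, take $s=(0,c_2)$; and if $k\ell\neq 0$ take $s=\bigl(-\tfrac{\ell}{k}t,\,t\bigr)$ with $t\coloneqq\min\{c_2,\,c_1|k|/|\ell|\}>0$, which is nonzero and satisfies $|s_1|\le c_1$, $|s_2|\le c_2$, and $ks_1+\ell s_2=0$.

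There is essentially no hard step here: the only thing to be careful about is the degenerate case $k=\ell=0$ (where the ``direction'' of $L$ is not unique), and the subspace/neighbourhood-of-the-origin formulation sidesteps it cleanly since then $L=\mathbb{R}^2$. The statement is really just recording that a homogeneous linear equation in two real unknowns has a nontrivial solution that can be scaled into any given box around the origin, together with the trivial sign symmetry of that box.
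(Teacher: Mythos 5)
Your proposal is correct and takes essentially the same approach as the paper: the explicit case distinction on whether $k$ or $\ell$ vanishes, with $s_1=-\tfrac{\ell}{k}s_2$ and $s_2=\min\{c_2,|k/\ell|c_1\}$ in the generic case, is exactly the paper's construction, and the sign-symmetry observation for $-s$ is the same. The abstract kernel-plus-scaling formulation you add is a clean way to package the same idea but does not change the argument.
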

\begin{proof} {Proof}
If $k=0$ then a possible optimal solution is $s=(c_1,0)$. It also holds that $(-c_1,0)$ is an optimal solution. The same argument holds for $\ell=0$ and $s=(0,c_2)$. Let $k \neq 0$ and $ \ell \neq 0 $ then we can choose $s_1=-\frac{\ell}{k} s_2$ and $s_2= \min\{c_2, \vert \frac{k}{\ell} \vert c_1\}$ which is feasible for the problem and thus an optimal solution. $-s$ is also an optimal solution because the bounds for $s_1,s_2$ are symmetric. 
\end{proof}

\begin{theorem}
    Every vertex solution to \eqref{eq:lp} has at most one maximal connected component with fractional values.
\end{theorem}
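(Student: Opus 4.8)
The plan is to argue by contrapositive: assume a feasible point $(d,\delta,\beta,\gamma) \in P_\Delta$ has two distinct maximal connected components $M_1, M_2$ with fractional values (meaning $x_{i,j}+d_{i,j} \notin \mathbb Z$, equivalently $d_{i,j} \notin \mathbb Z$, on those components), and show the point is not a vertex by exhibiting a nonzero perturbation $\eta$ such that both $(d,\delta,\beta,\gamma) \pm \eta$ remain in $P_\Delta$. Being the midpoint of two distinct feasible points, the original point cannot be a vertex. The perturbation will be supported on $M_1 \cup M_2$ only, and within each $M_k$ it will be a constant shift $s_k$ of all the $d$-values; the two constants $s_1, s_2$ are coupled by a single linear equation so that the capacity consumption $\sum \delta_{i,j}$ is preserved to first order, which is where the auxiliary Lemma (the $2$-variable feasibility lemma with the symmetric box) comes in — it produces a nontrivial $(s_1,s_2)$ with $\pm(s_1,s_2)$ both admissible.

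The key steps, in order: (1) Reduce to perturbing only $d$: by Remark~\ref{remark:sol construction}-type reasoning, once $d$ is perturbed we must re-derive $\delta,\beta,\gamma$; I will choose the shift small enough that all the absolute-value expressions defining $\beta_{i,j}$ and $\gamma_{i,j}$ keep their sign and that $|d_{i,j}|$ keeps its sign on the perturbed cells, so $\delta_{i,j}, \beta_{i,j}, \gamma_{i,j}$ change \emph{linearly} in the shift. This makes every constraint of $P$ locally linear in $(s_1,s_2)$. (2) Check the box constraints $\min\Xi \le x_{i,j}+d_{i,j} \le \max\Xi$: since the component values are strictly fractional (strictly between consecutive integers, hence strictly inside $[\min\Xi,\max\Xi]$), a small enough shift of either sign stays feasible. (3) Check the TV-edge constraints: an edge interior to $M_k$ has both endpoints shifted equally, so $\beta$/$\gamma$ there is unchanged; an edge between $M_k$ and the outside has one endpoint shifted — but by maximality the value across that edge differs, so the absolute-value term is bounded away from the kink and small shifts keep it linear and feasible; an edge between $M_1$ and $M_2$ directly is handled the same way (their values differ, else they'd be one component). (4) Check the capacity constraint: here I need $\sum_{(i,j)\in M_1} \pm s_1 \cdot \sgn(d_{i,j}) + \sum_{(i,j)\in M_2} \pm s_2 \cdot \sgn(d_{i,j}) = 0$; writing $k = \sum_{(i,j)\in M_1}\sgn(d_{i,j})$ and $\ell = \sum_{(i,j)\in M_2}\sgn(d_{i,j})$, this is exactly $k s_1 + \ell s_2 = 0$, and the Lemma (with $c_1,c_2$ the max feasible shift magnitudes from steps 2–3, both positive) supplies a nontrivial symmetric solution. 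If $\sum\delta_{i,j} < \Delta$ strictly I don't even need the equation — any small shift of one component works — so the binding case $\sum\delta_{i,j}=\Delta$ is the one requiring the Lemma. (5) Conclude: $(d\pm\eta,\ldots)$ are two distinct points of $P_\Delta$ with midpoint the original, so it is not a vertex.

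I expect the main obstacle to be step (1)/(3) bookkeeping: making sure that shifting the $d$-values on a whole component does not accidentally make some $d_{i,j}$ cross zero (changing $\delta$ nonlinearly) or make a boundary edge hit its kink, and that the induced changes in $\delta,\beta,\gamma$ are genuinely the minimal ones (so that the perturbed tuple is still in $P$, not just that $d$ is feasible). A subtle point worth spelling out: cells in $M_k$ with $d_{i,j}=0$ contribute $\sgn(0)$ ambiguously to the capacity — on such a cell $\delta_{i,j}=0$ and shifting $d_{i,j}$ by $\pm t$ forces $\delta_{i,j}=t$ regardless of sign, so the capacity consumption increases by $t$ under \emph{both} $+\eta$ and $-\eta$; to keep the capacity invariant I should restrict the fractional component to the case where this doesn't occur, or absorb it by noting such cells have fractional $d_{i,j}\ne 0$ anyway (a fractional value is never $0$), so in fact $\sgn(d_{i,j})$ is well-defined and nonzero on every cell of a fractional component, and $k,\ell$ are honest nonzero-friendly integers. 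That last observation is what makes the argument go through cleanly, and I'd state it explicitly before invoking the Lemma.
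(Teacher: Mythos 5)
Your proposal is correct and follows essentially the same route as the paper's proof: a constant shift $s_k$ on each fractional component, chosen small enough to preserve all relevant signs (of the $d_{i,j}$ and of the jump differences across component boundaries, including between the two components when they are adjacent), coupled by the sign-weighted linear equation that keeps the capacity consumption fixed, with the auxiliary lemma supplying a nontrivial symmetric solution $\pm(s_1,s_2)$ and hence a convex-combination contradiction. Your explicit remark that fractional entries are nonzero, so each cell's capacity contribution is genuinely linear in the shift, is exactly the fact the paper uses implicitly when it splits the sums by the sign of $d_{i,j}$.
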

\begin{proof} {Proof}
Let $(d^*,\delta^*, \beta^*, \gamma^*)$ be optimal for \eqref{eq:lp} with two fractional connected components $M_1,M_2$  and for all $(i,j) \in \Omega$ the equalities $|d_{i,j}^*| = \delta_{i,j}^*$, $|d_{i+1,j}^*+x_{i+1,j}- d_{i,j}^*-x_{i,j}| = \beta_{i,j}^*$ and $|d_{i,j+1}^*+x_{i,j+1}- d_{i,j}^*-x_{i,j}| = \gamma_{i,j}^*$ hold (this is wlog because for every feasible point of the relaxation, we can find a feasible point of this form, which would have a better objective value in the case that the second or third condition was no met by the original point, see Remark \ref{remark:sol construction}). Because the set $\Omega$ contains a finite amount of elements, all connected components are finite. Thus we can assume that both connected components are maximal (otherwise we add the missing index pairs).

All index pairs adjacent to one of the connected components have a strictly larger or smaller value. Let $v_1$ be the value of the index pairs in $M_1$ and $v_2$ be the value of the index pairs in $M_2$. Because $\Xi$ is a finite set we can find $\ell_1 \coloneqq \max \{ \xi \in \Xi | \xi < v_1\}$ and $u_1 \coloneqq \min \{ \xi \in \Xi | \xi > v_1\}$ for $M_1$ and $\ell_2 \coloneqq \max \{ \xi \in \Xi | \xi < v_2\}$ and $u_2 \coloneqq \min \{ \xi \in \Xi | \xi > v_2\}$ for $M_2$.

We now have to distinguish between two cases. For the first case we assume that $M_1$ and $M_2$ are adjacent to each other. Because both components are maximal it follows that $v_1 \neq v_2$.
We introduce 
\begin{align}
\label{cond1}
t_k(i,j) = \begin{cases}
s_k &\text{ if } (i,j) \in M_k,\\
0 &\text{ otherwise }
\end{cases}
\end{align}
with $s_k \in [\max\{\ell_k-v_k,v_k-u_k, - \frac{1}{2}|v_1-v_2|\},\min\{u_k - v_k,v_k-\ell_k,\frac{1}{2}|v_1-v_2|\}]$ with $k \in {1,2}$.  

We set $t \coloneqq t_1+t_2$.
Because $x_{i,j} \in \Xi$ holds for all $(i,j) \in \Omega$, our choices for the lower bounds $\ell_1, \ell_2$ and for the upper bounds 
$u_1,u_2$ ensure that the condition $\sign(d_{i,j}+t(i,j))= \sign(d_{i,j}-t(i,j)) =\sign(d_{i,j}) $  holds, which follows from $\max\{\ell_k-v_k,v_k-u_k\} \leq s_k \leq \min\{u_k - v_k,v_k-\ell_k\}$ and where we assume that the condition also holds if one  side is $0$. We obtain that
\begin{align*}
    \sum_{(i,j) \in M_1} (\vert d_{i,j} +t(i,j) \vert - \vert d_{i,j} \vert) + \sum_{(i,j) \in M_2} (\vert d_{i,j} +t(i,j) \vert - \vert d_{i,j} \vert)  \\
= \sum_{\substack{(i,j) \in M_1, \\ d_{i,j}>0}} s_1+ \sum_{\substack{(i,j) \in M_1, \\ d_{i,j}<0}} -s_1 + \sum_{\substack{(i,j) \in M_2, \\ d_{i,j}>0}} s_2 + \sum_{\substack{(i,j) \in M_2, \\ d_{i,j}<0}} -s_2
\end{align*} 
and 
\begin{align*}
    \sum_{(i,j) \in M_1} (\vert d_{i,j} -t(i,j) \vert - \vert d_{i,j} \vert) + \sum_{(i,j) \in M_2} (\vert d_{i,j} -t(i,j) \vert - \vert d_{i,j} \vert) \\
= \sum_{\substack{(i,j) \in M_1, \\ d_{i,j}>0}} -s_1+ \sum_{\substack{(i,j) \in M_1, \\ d_{i,j}<0}} s_1 + \sum_{\substack{(i,j) \in M_2, \\ d_{i,j}>0}} -s_2 + \sum_{\substack{(i,j) \in M_2, \\ d_{i,j}<0}} s_2 
\end{align*} 
hold for any $t$ constructed as above.
If the equality 
\begin{align}
\label{cond3}
\sum_{\substack{(i,j) \in M_1, \\ d_{i,j}>0}} -s_1+ \sum_{\substack{(i,j) \in M_1, \\ d_{i,j}<0}} s_1 + \sum_{\substack{(i,j) \in M_2, \\ d_{i,j}>0}} -s_2 + \sum_{\substack{(i,j) \in M_2, \\ d_{i,j}<0}} s_2 = 0
\end{align} 
holds, it guarantees that the two points with $\bar d=d^*+t$ and $\tilde d=d^*-t$ have the same capacity consumption as the point with $d^*$, if we set $|\bar d_{i,j}| = \bar \delta_{i,j}$ and  $|\tilde d_{i,j}| = \tilde \delta_{i,j}$ for all $i,j$, which means that they adhere to the capacity constraint of (LP). This also shows that the assumption $\delta^*_{i,j} = |d^*_{i,j}|$ was not a restriction, because we could also choose larger $\bar \delta_{i,j}$ and $\tilde \delta_{i,j}$ if $\delta^*_{i,j}$ were to be larger than $|d^*_{i,j}|$. In combination with $t = t_1 + t_2$, the conditions \eqref{cond1} and \eqref{cond3} describe the linear program
\begin{gather}
\begin{aligned}
    \min_{s_1,s_2}\quad & 0 \\
    \text{s.t.}\quad
    & \sum_{\substack{(i,j) \in M_1, \\ d_{i,j}>0}} s_1+ \sum_{\substack{(i,j) \in M_1, \\ d_{i,j}<0}} -s_1 + \sum_{\substack{(i,j) \in M_2, \\ d_{i,j}>0}} s_2 + \sum_{\substack{(i,j) \in M_2, \\ d_{i,j}<0}} -s_2 = 0, \\
    & \max\{\ell_1-v_1,v_1-u_1, -\tfrac{1}{2}|v_1-v_2|\}\leq s_1 \leq \min\{u_1 - v_1,v_1-\ell_1, \tfrac{1}{2}|v_1-v_2|\}, \\
    & \max\{\ell_2-v_2,v_2-u_2, -\tfrac{1}{2}|v_1-v_2|\}\leq s_2 \leq \min\{u_2 - v_2,v_2-\ell_2, \tfrac{1}{2}|v_1-v_2|\}.
\end{aligned}
\end{gather}

From our auxiliary lemma we obtain that there exists $s =(s_1,s_2)\neq 0$ for which $s$ and $-s$ solve the linear program. We choose $s_1$ and $s_2$ for $t$ accordingly. We set  $\bar\beta_{i,j} = |\bar d_{i+1,j}+x_{i+1,j}- \bar d_{i,j}-x_{i,j}| ,$   $\bar \gamma_{i,j} = |\bar d_{i,j+1}+x_{i,j+1}- \bar d_{i,j}-x_{i,j}|,$ $ \tilde \beta_{i,j} = |\tilde d_{i+1,j}+x_{i+1,j}- \tilde d_{i,j}-x_{i,j}|$, and $\tilde \gamma_{i,j} = |\tilde d_{i,j+1}+x_{i,j+1}- \tilde d_{i,j}-x_{i,j}|$. We now show
\begin{align*}
    \sign(\bar d_{i+1,j}+x_{i+1,j}- \bar d_{i,j}-x_{i,j}) &= \sign( d_{i+1,j}^*+x_{i+1,j}-  d_{i,j}^*-x_{i,j}) \\ &= \sign(\tilde d_{i+1,j}+x_{i+1,j}- \tilde d_{i,j}-x_{i,j}),
\end{align*}
where we assume that equality also holds if one side is $0$. It then follows that $\frac{1}{2} \bar \beta + \frac{1}{2} \tilde \beta = \beta^*$. 
If $(i,j), (i+1,j) \in \Omega$ are in the same or in neither fractional component, then it follows from $t(i+1,j)=t(i,j)$ that
\begin{align*}
d_{i+1,j}^*+x_{i+1,j}-  d_{i,j}^*-x_{i,j} 
&= d_{i+1,j}^* + t(i+1,j) +x_{i+1,j}-  d_{i,j}^*-t(i,j)-x_{i,j} \\
&= d_{i+1,j}^* - t(i+1,j) +x_{i+1,j}-  d_{i,j}^*+t(i,j)-x_{i,j}.
\end{align*}

If $(i,j) \in M_1$ and $(i+1,j)  \in \Omega \setminus (M_1 \cup M_2)$ then $d_{i,j}+t(i,j)$ and $d_{i,j}-t(i,j)$ are bounded by the closest integer values and thus the sign remains the same because $d_{i+1,j}+x_{i+1,j}$ is an integer and $t(i+1,j)=0$.

If $(i,j) \in M_1$ and $(i+1,j) \in M_2$ then the condition $-\frac{1}{2}|v_1-v_2|\leq s_k \leq \frac{1}{2}|v_1-v_2|$ ensures that the sign does not change because $d_{i,j} + x_{i,j} > d_{i+1,j}+x_{i+1,j}$ implies
\[d_{i,j}+t(i,j) + x_{i,j} =v_1+s_1 \geq v_2+s_2 = d_{i+1,j}+t(i+1,j)+x_{i+1,j}\] and vice versa.

Thus in all possible cases the signs remain the same.
By the same argumentation $\frac{1}{2} \bar \gamma + \frac{1}{2} \tilde \gamma = \gamma^*$ holds.
Thus $(\bar d, \bar \delta, \bar \beta,\bar \gamma)$ and $(\tilde d, \tilde \delta, \tilde \beta, \tilde \gamma)$ are both feasible for (LP) and $(d^*,\delta^*, \beta^*, \gamma^*)$ is a convex combination of the two, which means that it is not a vertex. 

In the second case in which the fractional components are not adjacent, the proof remains the same except that the condition $-\frac{1}{2}|v_1-v_2| \leq s_k \leq \frac{1}{2}|v_1-v_2|$ is dropped.

It follows that no feasible point with more than one maximal connected component with fractional values is a vertex of the polyhedron. 
\end{proof}

\begin{theorem}\label{thm:capacity_consumption}
    Every vertex solution $(d,\delta,\beta,\gamma)$ of \eqref{eq:lp} fulfills the capacity constraint with equality or $x+d \in \Xi^{N \times N}$.
\end{theorem}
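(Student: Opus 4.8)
The plan is to argue by contradiction, leveraging the previous theorem. Suppose $(d,\delta,\beta,\gamma)$ is a vertex of $P_\Delta$ for which the conclusion fails, that is, $\sum_{i=1}^N\sum_{j=1}^M \delta_{i,j} < \Delta$ and $x_{i,j}+d_{i,j} \notin \Xi$ for some $(i,j)$. By the previous theorem there is then exactly one maximal connected component $M \subset \Omega$ carrying a fractional value $v$. Since the constraints defining $P$ enforce $\min\Xi \le x_{i,j}+d_{i,j} \le \max\Xi$ and $\Xi$ is a contiguous set of integers, $v$ lies strictly between $\ell \coloneqq \max\{\xi \in \Xi : \xi < v\}$ and $u \coloneqq \min\{\xi \in \Xi : \xi > v\}$, both of which exist.

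Before perturbing I would make two reductions. First, at such a vertex we may assume $\delta_{i,j} = |d_{i,j}|$ and that $\beta_{i,j}$, $\gamma_{i,j}$ equal the absolute values of the corresponding jumps for all $i,j$: if, say, $\delta_{i_0,j_0} > |d_{i_0,j_0}|$, then $\delta_{i_0,j_0}$ sits strictly inside all constraints it appears in (its own box constraints and, by the slackness assumption, the capacity constraint), so it can be both increased and decreased while keeping feasibility, contradicting that $(d,\delta,\beta,\gamma)$ is a vertex; the same reasoning applies to $\beta$ and $\gamma$ (compare Remark~\ref{remark:sol construction}). Second, for every $(i,j) \in M$ we have $d_{i,j} = v - x_{i,j}$, which is not an integer and hence is bounded away from $0$; and every index pair adjacent to $M$ but not contained in $M$ has an integer value (otherwise there would be a second maximal connected component with a fractional value, contradicting the previous theorem), so that value is $\le \ell$ or $\ge u$.

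Then I would run the single-component version of the perturbation argument from the proof of the previous theorem. Let $t \colon \Omega \to \mathbb{R}$ equal a constant $s$ on $M$ and $0$ elsewhere. For $|s|$ small enough the following all hold at once: $\sign(d_{i,j} \pm t(i,j)) = \sign(d_{i,j})$ for all $(i,j)$ (only the coordinates in $M$ are nontrivial, and there $d_{i,j}$ is bounded away from $0$); $\ell \le v \pm s \le u$, hence $\min\Xi \le x_{i,j}+d_{i,j}\pm t(i,j) \le \max\Xi$ everywhere; the sign of every horizontal and vertical jump of $x+d\pm t$ agrees with that of $x+d$ (the jumps inside $M$ remain $0$, across the boundary of $M$ the value on the $M$-side stays in the open interval $(\ell,u)$ while the neighbour is integer, and all other jumps are unchanged); and, because $\sum_{i,j}\delta_{i,j} < \Delta$ strictly, the perturbed capacity consumption $\sum_{i,j}|d_{i,j}\pm t(i,j)|$ — which differs from $\sum_{i,j}|d_{i,j}|$ by at most $|s|\,|M|$ — still does not exceed $\Delta$. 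Setting $\bar d \coloneqq d + t$ and $\tilde d \coloneqq d - t$ with $\bar\delta,\bar\beta,\bar\gamma$ and $\tilde\delta,\tilde\beta,\tilde\gamma$ determined as the corresponding absolute values, the sign-preservation properties yield $\tfrac12\bar\delta+\tfrac12\tilde\delta = \delta$, $\tfrac12\bar\beta+\tfrac12\tilde\beta = \beta$ and $\tfrac12\bar\gamma+\tfrac12\tilde\gamma = \gamma$ by the same computation as in the proof of the previous theorem. Hence $(d,\delta,\beta,\gamma)$ is the midpoint of two distinct points feasible for \eqref{eq:lp}, contradicting that it is a vertex, and the theorem follows.

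The sign bookkeeping across the boundary of $M$ and the identities $\tfrac12\bar\beta+\tfrac12\tilde\beta=\beta$, $\tfrac12\bar\gamma+\tfrac12\tilde\gamma=\gamma$ are routine: they are the one-component specialization of what was already carried out in the proof of the previous theorem, and in fact simpler, since with a single fractional component there is no second component whose value has to be separated from $v$, so the bounds of type $\pm\tfrac12|v_1-v_2|$ disappear and a single scalar $s$ suffices. The only genuinely new ingredient, and the sole place where the hypothesis that the capacity constraint is slack enters, is that strict slackness lets us absorb the change $\pm s\bigl(\#\{(i,j)\in M : d_{i,j}>0\}-\#\{(i,j)\in M : d_{i,j}<0\}\bigr)$ in the used capacity for small $s$ — which is why, unlike in the previous proof, no capacity-preserving equation on $s$ is needed. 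I therefore expect no real obstacle beyond keeping these cases straight.
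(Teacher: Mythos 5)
Your argument is correct and follows essentially the same route as the paper: perturb the single fractional connected component by $\pm s$, use sign preservation to show the point is the midpoint of two distinct feasible points, and use the strict slackness of the capacity constraint to guarantee a nonzero admissible $s$. The only cosmetic difference is that you explicitly invoke the previous theorem to justify that neighbours of the component are integer-valued and that $\delta,\beta,\gamma$ may be taken tight at a vertex, whereas the paper folds these points into a reference to ``the arguments from above.''
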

\begin{proof} {Proof}
Let $(d^*,\delta^*, \beta^*, \gamma^*)$ be optimal for \eqref{eq:lp} with a fractional connected component for which the capacity constraint is inactive. We define $\ell \coloneqq \max \{ \xi \in \Xi | \xi < v\}$ and $u \coloneqq \min \{ \xi \in \Xi | \xi > v\}$ where $v$ is the fractional value of the connected component $M$. We define  \[t(i,j) = \Bigg\{\begin{matrix}
s &\text{ if } (i,j) \in M,\\
0 &\text{ otherwise }
\end{matrix} \]
with $s \in [\max(\ell-v, v-u),\min(u - v, v-\ell)]$. Furthermore, we ensure $\sum_{j=1}^N \sum_{i=1}^N  \vert  d_{i,j}+t(i,j) \vert \le \Delta$ and $\sum_{j=1}^N \sum_{i=1}^N  \vert  d_{i,j}-t(i,j) \vert \le \Delta$. Because $\sum_{j=1}^N \sum_{i=1}^N  \vert  d^*_{i,j} \vert < \Delta$ it follows from $\ell-v<0<u-v$ that we can find a $t \neq 0$ that satisfies all conditions with the arguments from above. We obtain that $\bar d = d^*+t$ and $\tilde d = d^*-t$ satisfy the capacity constraint. We set $\bar \delta_{i,j} 
= |\bar d_{i,j}|$, $\bar \beta_{i,j} = |\bar d_{i,j} + x_{i,j} -\bar d_{i+1,j} - x_{i+1,j}| $ and $\bar \gamma_{i,j} = |\bar d_{i,j+1} +x_{i,j+1} - \bar d_{i,j}-x_{i,j}|$ and define $\tilde \delta$, $\tilde \beta$ and $\tilde \gamma$ in analogously. Then $(\bar d, \bar \delta, \bar \beta, \bar \gamma)$ and $(\tilde d, \tilde \delta, \tilde \beta, \tilde \gamma)$ are feasible for \eqref{eq:lp} which means the convex combination $(d^*,\delta^*, \beta^*, \gamma^*)$  is not a vertex of the polyhedron of \eqref{eq:lp}.  It follows that no feasible point with at least one maximal connected component with fractional values and inactive capacity constraint is a vertex. 
\end{proof}

We will use this structure in Subsection \ref{subsec:cuts} to obtain valid cuts for our linear relaxation \eqref{eq:lp}. The previous result directly implies the connection between the two relaxations $\eqref{eq:lp}$ and $\eqref{eq:lr}$.
\begin{corollary}
        Every vertex solution $(d,\delta,\beta,\gamma, \mu) $ of \eqref{eq:lr} fulfills $x+d \in \Xi^{N \times N}$ even when the integrality constraint is dropped.
\end{corollary}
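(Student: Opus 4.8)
The plan is to reduce the corollary to Theorem~\ref{thm:equivalenceLrLp} (the stated equivalence between \eqref{eq:lp} and \eqref{eq:lr}) together with the two structural theorems just proven. First I would use the equivalence of \eqref{eq:lp} and \eqref{eq:lr}: by the remark preceding the corollary, the optimal values of the inner minimization in \eqref{eq:lr} for a fixed $\mu$ can be described through the LP, and more importantly the solutions coincide on the integer entries. Concretely, given a vertex $(d,\delta,\beta,\gamma,\mu)$ of \eqref{eq:lr}, the point $(d,\delta,\beta,\gamma)$ solves the inner minimization problem of \eqref{eq:lr} for the fixed multiplier $\mu$, which — after absorbing the penalty $\mu(\sum \delta_{i,j} - \Delta)$ into the linear part of the objective — is itself a linear program over the polyhedron $P$ (not $P_\Delta$). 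So the question becomes: why must an optimal vertex of this penalized LP over $P$ have $x+d \in \Xi^{N\times N}$?

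The key step is to transfer the two structural theorems from the capacity-constrained polyhedron $P_\Delta$ to the capacity-free polyhedron $P$. The arguments in the proofs of the two preceding theorems are almost entirely local: they perturb $d$ on one or two maximal fractional connected components by $\pm t$, and the only place where the capacity constraint $\sum \delta_{i,j} \le \Delta$ is used is to ensure the perturbed points $\bar d = d^* + t$ and $\tilde d = d^* - t$ remain feasible. Over $P$ there is no capacity constraint at all, so this obstruction disappears: any sufficiently small perturbation $t$ supported on a single fractional connected component, with $s$ chosen in the interval $[\max(\ell-v,v-u),\min(u-v,v-\ell)]$ so that all sign patterns of $d$ and of the neighbouring differences are preserved, produces two feasible points of $P$ whose midpoint is $(d^*,\delta^*,\beta^*,\gamma^*)$. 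Hence any vertex of $P$ — and therefore of the penalized LP, whose feasible region is $P$ — can have no fractional maximal connected component, i.e. $x + d \in \Xi^{N\times N}$. Since $x \in \Xi^{N\times M}$ is integer and the perturbation keeps every difference within the nearest pair of lattice values, integrality of $x+d$ in fact forces $d$ itself to be integer, so dropping the integrality constraint in \eqref{eq:lr} changes nothing.

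The main obstacle I anticipate is purely a bookkeeping one: making precise the reduction of a vertex of the max-min problem \eqref{eq:lr} to a vertex of an ordinary LP. One has to argue that at an optimal $(\bar d,\bar\delta,\bar\beta,\bar\gamma,\bar\mu)$, the tuple $(\bar d,\bar\delta,\bar\beta,\bar\gamma)$ is an optimal — indeed extreme — solution of the inner LP $\min\{c^Td + \alpha(\sum\beta + \sum\gamma) + \bar\mu\sum\delta_{i,j} : (d,\delta,\beta,\gamma)\in P\}$; if it were a strict convex combination of two other feasible points of $P$ with the same objective, those would contradict extremality of the original tuple in \eqref{eq:lr} for the fixed $\bar\mu$. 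Once that is in place, the structural argument of Theorem~\ref{thm:capacity_consumption} applies verbatim with the capacity bookkeeping deleted, which the proof can simply cite rather than repeat. A one- or two-sentence remark that the whole construction never requires $d$ to be non-integer (so the last clause of the corollary is immediate) finishes it.
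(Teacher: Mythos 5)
Your proposal is correct and rests on the same underlying idea as the paper---the perturbation argument behind Theorem~\ref{thm:capacity_consumption}---but you organize the reduction differently, and the paper's route is considerably shorter. Rather than re-running the perturbation argument on $P$ with the capacity bookkeeping deleted (and worrying about how a vertex of the max--min problem relates to a vertex of the inner LP for fixed $\mu$), the paper simply observes that every feasible point of \eqref{eq:lr} has capacity consumption at most $N^2(\max \Xi - \min \Xi)$, so adding the constraint $\sum_{i,j}\delta_{i,j} \le N^2(\max \Xi - \min \Xi)+1$ leaves the feasible set unchanged while turning it into a polyhedron of the form $P_\Delta$. Theorem~\ref{thm:capacity_consumption} then states that every vertex either satisfies that capacity constraint with equality or has $x+d\in\Xi^{N\times N}$; since the added constraint is strictly slack at every feasible point, the first alternative is impossible and the conclusion follows with no new perturbation argument. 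Your version is sound---the capacity constraint really is the only global obstruction in the proof of Theorem~\ref{thm:capacity_consumption}, and your closing remark that integrality of $x+d$ forces integrality of $d$ is correct because $x\in\Xi^{N\times M}\subset\mathbb{Z}^{N\times M}$---but it re-proves, in several paragraphs, a statement the paper extracts for free from the dichotomy it has already established.
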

\begin{proof} {Proof}
Regardless of the integrality constraint, every feasible point of \eqref{eq:lr} has a capacity consumption of at most $N^2(\max \Xi-\min \Xi)$. Therefore adding the constraint $\sum_{j=1}^N \sum_{i=1}^N  \delta_{i,j} \le N^2(\max \Xi -\min \Xi )+1$ to $\eqref{eq:lr}$ without the integrality constraint does not change the 
feasible set. Theorem \ref{thm:capacity_consumption} gives that every vertex solution fulfills the condition $x+d \in \Xi^{N \times N}$. 
\end{proof}

We can, however, make further statements regarding the vertices.

\begin{theorem}
    Let $d$ be feasible for the binary \eqref{eq:trip}  with $\delta,\beta,\gamma$ constructed as in Remark \ref{remark:sol construction}. Then $(d,\delta,\beta,\gamma)$ is a vertex of the polyhedron.
\end{theorem}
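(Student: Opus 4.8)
The plan is to verify the vertex property directly from the definition: $(d,\delta,\beta,\gamma)$ is a vertex of the polyhedron precisely when it cannot be written as $\lambda(d^1,\delta^1,\beta^1,\gamma^1) + (1-\lambda)(d^2,\delta^2,\beta^2,\gamma^2)$ with $\lambda \in (0,1)$ and the two points distinct and feasible. So I would assume such a representation and show the two points must both equal $(d,\delta,\beta,\gamma)$. Here $\delta,\beta,\gamma$ are the ones from Remark \ref{remark:sol construction}, i.e.\ $\delta_{i,j}=|d_{i,j}|$, $\beta_{i,j}=|x_{i+1,j}+d_{i+1,j}-x_{i,j}-d_{i,j}|$, and $\gamma_{i,j}=|x_{i,j+1}+d_{i,j+1}-x_{i,j}-d_{i,j}|$, and $(d,\delta,\beta,\gamma)$ lies in $P_\Delta$ because $d$ is feasible for the binary \eqref{eq:trip}.

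The key step is to pin down the $d$-component, and this is where binarity is used. Since the control is binary we may take $\min\Xi = 0$ and $\max\Xi = 1$, and feasibility of $d$ gives $x_{i,j}+d_{i,j}\in\{0,1\}$ for every $(i,j)$; that is, for each index pair one of the two box inequalities $0 \le x_{i,j}+d_{i,j}\le 1$ is active. For any feasible point we have $0 \le x_{i,j}+d^k_{i,j}\le 1$, $k\in\{1,2\}$, and since $x_{i,j}+d_{i,j}$ is a convex combination of $x_{i,j}+d^1_{i,j}$ and $x_{i,j}+d^2_{i,j}$: if $x_{i,j}+d_{i,j}=0$ then both terms must be $0$, and if $x_{i,j}+d_{i,j}=1$ then both must be $1$. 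In either case $d^1_{i,j}=d^2_{i,j}=d_{i,j}$, so $d^1=d^2=d$. Once this is established, the auxiliary variables follow for free: the constraints in $P$ force $\delta^k_{i,j}\ge|d^k_{i,j}|$, $\beta^k_{i,j}\ge|x_{i+1,j}+d^k_{i+1,j}-x_{i,j}-d^k_{i,j}|$, $\gamma^k_{i,j}\ge|x_{i,j+1}+d^k_{i,j+1}-x_{i,j}-d^k_{i,j}|$, and since $d^k=d$ the right-hand sides are exactly $\delta_{i,j},\beta_{i,j},\gamma_{i,j}$. As each of $\delta_{i,j},\beta_{i,j},\gamma_{i,j}$ is a convex combination of two quantities each no smaller than itself, all of them coincide; hence $\delta^1=\delta^2=\delta$, $\beta^1=\beta^2=\beta$, $\gamma^1=\gamma^2=\gamma$. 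This contradicts distinctness and proves the claim.

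I do not expect a genuine obstacle here; the only thing to get right is the observation that binarity makes $x+d$ attain only the \emph{endpoints} of $\Xi$, so the box constraints alone determine $d$ in any convex decomposition. It is worth noting in the write-up that the capacity constraint plays no role, so the point is in fact a vertex of $P$ as well, and that this argument is exactly what breaks in the non-binary case: when $x_{i,j}+d_{i,j}$ is an interior integer of $\Xi$, one can perturb $d^1$ and $d^2$ in opposite directions within $\Xi$, which is why feasible integer points need not be vertices there.
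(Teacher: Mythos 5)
Your proof is correct, but it takes a genuinely different route from the paper. The paper proves the statement by exhibiting a supporting linear objective: it constructs a cost vector with $c_{i,j}=-5\alpha$ where $x_{i,j}+d_{i,j}=1$ and $c_{i,j}=5\alpha$ where $x_{i,j}+d_{i,j}=0$, and argues that any change in $d$ increases the linear term by $5\alpha$ per unit while the total-variation terms can decrease by at most $4\alpha$ per unit (four neighbours), so the given point is the optimal solution of an LP over the polyhedron and hence a vertex. You instead verify extremality directly: in any convex decomposition, the box constraints $0\le x_{i,j}+d^k_{i,j}\le 1$ together with the fact that $x_{i,j}+d_{i,j}$ sits at an endpoint force $d^1=d^2=d$, and then $\delta,\beta,\gamma$ are pinned down because each is a convex combination of quantities bounded below by itself. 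Your argument is more elementary and, arguably, tighter on one point: since $\delta$ does not appear in the objective of \eqref{eq:lp}, the paper's constructed LP does not literally have a unique optimizer (one could enlarge $\delta$ within the capacity budget without changing the objective), so the "unique optimizer implies vertex" reasoning needs the extra convexity observation about $\delta$ that you supply explicitly. Your proof is also objective-independent, so it transfers immediately to the weighted variants the paper mentions after its proof; the paper's construction has the separate virtue of producing an explicit exposing cost vector, which is occasionally useful in its own right. Your closing remark about why the argument fails for non-binary $\Xi$ (interior integers admit opposite perturbations) matches the paper's subsequent counterexample with $\Xi=\{0,1,2\}$.
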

\begin{proof} {Proof}
    We construct our cost vector $c$ such that $(d,\delta,\beta,\gamma)$ is the only optimal point. We set
      \[
    c_{i,j} \coloneqq \Bigg\{\begin{matrix}
      -5 \alpha & \text{if $x_{i,j}+d_{i,j} = 1$}\\
       5 \alpha & \text{if $x_{i,j}+d_{i,j} = 0$}
    \end{matrix} 
  \] and it follows directly that every other choice of $d$ is suboptimal for the corresponding \eqref{eq:trip} problem. If a value $d_{i,j}$ is set to a different value than in the given point the first term in the objective would increase by $5 \alpha$ times the absolute value of this difference, because the $d_{i,j}$ value can only be changed in one direction. This is suboptimal because the second term can not be changed by more than $4 \alpha$ times the absolute value of this difference. 
\end{proof}
It is evident by the proof that this property extends to the case in which the $\beta, \gamma$, and $ \delta$ are weighted by multiplying the constructed $c$ with the maximum weight entry. 
However, this property does not translate to the case of a non-binary control even without weights as we show below.

\begin{example}
    Let $\Xi=\{0,1,2\}$, $N=M=3$, $\Delta = 3$, $\alpha=1$, and $x \equiv 0$. Define 
     \[d \coloneqq \begin{pmatrix}
        1 & 0 & 1 \\ 1 & 0 & 0 \\ 0 & 0 & 0
    \end{pmatrix}  \text{ and } d^1 \coloneqq \begin{pmatrix}
        0.5 & 0 & 2 \\ 0.5 & 0 & 0 \\ 0 & 0 & 0 
    \end{pmatrix} \text{and } d^2 \coloneqq \begin{pmatrix}
        1.5 & 0 & 0 \\ 1.5 & 0 & 0 \\ 0 & 0 & 0
    \end{pmatrix} 
    \text{ and }
    c\coloneqq \begin{pmatrix}
    -2 & 100 & -2.1 \\ -2 & 100 & 100 \\ 100 & 100 & 100
    \end{pmatrix}
    \] 
    We now show that $d$ with the corresponding $\beta,\gamma, \delta$ is not a vertex of the polyhedron $P$. We note that $d^1$ and $d^2$ are points in the polyhedron $P$ with the corresponding $\beta^1,\gamma^1, \delta^1$ and $\beta^2,\gamma^2, \delta^2$. It is obvious that $d= 0.5 d^1 +0.5 d^2$. Due to $x \equiv 0$ it also holds that $(\beta,\gamma,\delta) = 0.5 (\beta^1,\gamma^1,\delta^1) +   0.5 (\beta^2,\gamma^2,\delta^2) $. Thus $(d, \delta, \beta, \gamma)$ is not a vertex of the polyhedron $P$. Furthermore, $(d,\delta, \beta, \gamma)$ is optimal for the problem \eqref{eq:trip} with the cost vector c, which shows that the optimal solution to \eqref{eq:trip} does not have to be a vertex of the polyhedron. 
\end{example}

The relaxation $\eqref{eq:lr}$ provides a lower bound that is at least as good as the lower bound provided by
$\eqref{eq:lp}$. Both bounds are 
identical when the underlying polyhedron of the Lagrangian relaxation is already the convex hull of the integer-valued points, see e.g.\ p.\ 125 in \cite{korte2018}, which is the case for this problem as we have seen in the previous corollary. We now show that an optimal solution to \eqref{eq:lr} directly gives us an optimal solution to \eqref{eq:lp} and vice versa.

\begin{theorem}\label{thm:equivalenceLrLp}
    The problems \eqref{eq:lr} and \eqref{eq:lp} are equivalent in the sense that we can construct an optimal solution to one problem from 
    an optimal solution to the other problem.
\end{theorem}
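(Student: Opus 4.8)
The plan is to use that \eqref{eq:lr} is the partial Lagrangian dual of \eqref{eq:ip} obtained by dualizing only the capacity constraint, so that---as already observed above using the $d$-integrality of the polyhedron $P$ (the corollary to Theorem~\ref{thm:capacity_consumption}) together with the textbook fact on p.~125 of \cite{korte2018}---the optimal values of \eqref{eq:lp} and \eqref{eq:lr} coincide; the content of the theorem is to turn this value equality into an explicit construction. The engine is complementary slackness: for every $\mu\ge0$ and every $(d,\delta,\beta,\gamma)\in P_\Delta$ we have $\mu(\sum_{i,j}\delta_{i,j}-\Delta)\le0$, so the \eqref{eq:lp}-objective of such a point never falls below the inner objective of \eqref{eq:lr} at $\mu$, with equality precisely when $\mu=0$ or $\sum_{i,j}\delta_{i,j}=\Delta$. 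Hence a pair consisting of an optimal multiplier $\bar\mu$ together with a point $\hat d$ that minimizes the inner problem at $\bar\mu$, is feasible for \eqref{eq:lp}, and satisfies $\bar\mu(\sum_{i,j}\hat\delta_{i,j}-\Delta)=0$ is automatically optimal for both problems; the whole proof reduces to producing such a pair from a given optimum of either problem, with $\delta,\beta,\gamma$ reconstructed from $d$ as in Remark~\ref{remark:sol construction}.

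For the direction from \eqref{eq:lr} to \eqref{eq:lp} I would take an optimal $(\bar d,\bar\delta,\bar\beta,\bar\gamma,\bar\mu)$ and consider the face $F$ of $P$ on which the $\bar\mu$-Lagrangian is minimized. Since $\mu\mapsto\min_{P}[\,\cdot\,]$ is concave, piecewise affine, and finitely maximized (its value tends to $-\infty$ as $\mu\to\infty$ because of the point $d=0$), optimality of $\bar\mu$ forces $0$ into its superdifferential at $\bar\mu$, which is the convex hull of the slopes $\sum_{i,j}\delta_{i,j}-\Delta$ over the vertices of $F$. If $\bar\mu>0$ this yields a convex combination $\hat d$ of vertices of $F$ with $\sum_{i,j}\hat\delta_{i,j}=\Delta$; if $\bar\mu=0$ it yields a vertex of $F$ with $\sum_{i,j}\delta_{i,j}\le\Delta$. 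Either way $\hat d\in P_\Delta$ and complementary slackness holds, so $\hat d$ is optimal for \eqref{eq:lp}. For the reverse direction I would take an optimal vertex $(d^*,\delta^*,\beta^*,\gamma^*)$ of \eqref{eq:lp}; linear-programming duality supplies an optimal multiplier $\mu^*\ge0$ for the capacity constraint with $\mu^*(\sum_{i,j}\delta^*_{i,j}-\Delta)=0$ such that $(d^*,\delta^*,\beta^*,\gamma^*)$ minimizes the $\mu^*$-Lagrangian over $P$. Because $P$ is $d$-integral, the optimal face of this inner linear program also contains an integral vertex $\hat d$, so $(\hat d,\hat\delta,\hat\beta,\hat\gamma,\mu^*)$ is feasible and optimal for \eqref{eq:lr}; by the structural result of Section~\ref{sec: structure} it differs from $d^*$ only on the single fractional connected component, which is the coincidence on integer entries referred to in the remark above.

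The step I expect to be the crux is exactly this transfer from ``equal optimal value'' to ``explicit pair'': when passing from \eqref{eq:lr} to \eqref{eq:lp} one must argue that the optimal multiplier's subproblem admits a minimizer that is simultaneously capacity-feasible and complementary-slack, which is where the superdifferential/convex-combination argument and the case split $\bar\mu=0$ versus $\bar\mu>0$ are unavoidable; when passing from \eqref{eq:lp} to \eqref{eq:lr} one must reconcile a possibly fractional \eqref{eq:lp}-optimum with the $d$-integrality demanded inside \eqref{eq:lr}, for which the integrality of $P$ from the preceding corollary is precisely the tool. The remaining ingredients---weak duality, existence of the LP dual multiplier, concavity and coercivity of the dual function, and the bookkeeping of $\delta,\beta,\gamma$ via Remark~\ref{remark:sol construction}---are routine.
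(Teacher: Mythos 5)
Your proof is correct, and it establishes the same equivalence the paper does, but it routes one of the two directions through machinery the paper deliberately avoids. In the direction from \eqref{eq:lr} to \eqref{eq:lp} the two arguments are essentially identical: your observation that $0$ lies in the superdifferential of the piecewise-affine dual function at an optimal $\bar\mu>0$ is exactly the paper's statement that there exist inner minimizers with capacity consumption on either side of $\Delta$ (``otherwise we could improve the bound by increasing or decreasing $\mu$''), and both of you then take the convex combination that meets the capacity constraint with equality and invoke weak duality plus complementary slackness; your explicit treatment of the case $\bar\mu=0$ is a small gain in rigor over the paper's version. In the direction from \eqref{eq:lp} to \eqref{eq:lr} the mechanisms genuinely differ: the paper gives a fully explicit construction that exploits the structure theorem of Section~\ref{sec: structure} --- the optimal vertex has a single fractional connected component with a common value, which is rounded up or down to the adjacent element of $\Xi$ according to which direction decreases the capacity consumption, and $\mu$ is then computed as the difference quotient of objective change over capacity change between the two points --- whereas you obtain $\mu^*$ from the LP dual and the integral point as a $d$-integral vertex of the optimal face of the inner problem, relying on the corollary to Theorem~\ref{thm:capacity_consumption}. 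The paper's version buys a closed-form recipe (no auxiliary LP or dual extraction is needed, which matters since the theorem is meant to be used algorithmically), while your version buys brevity and supplies a cleaner justification of why the constructed multiplier is in fact optimal for \eqref{eq:lr}, a point the paper's proof passes over rather tersely. Both directions ultimately rest on the same two pillars --- weak duality with complementary slackness, and the $d$-integrality of the vertices of $P$ --- so I see no gap in your argument.
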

\begin{proof} {Proof}
    Let $(d^*,\delta^*,\beta^*,\gamma^*)$ be optimal for \eqref{eq:lp}. We now want to construct a feasible point $(d,\delta,\beta,\gamma, \mu)$ for \eqref{eq:lr} and show the optimality afterwards. We start with the construction of $d$. We keep every entry in $d^*$ which is not fractional the same for the corresponding entry in $d$. If all entries are not fractional then $(d^*,\delta^*,\beta^*,\gamma^*,0)$ is already optimal for the integer program and thus also optimal for the Lagrangian relaxation which is a lower bound for the integer program and an upper bound for the linear programming relaxation. All fractional entries are either rounded up or rounded down to the same next value in $\Xi$ depending on which rounding step decreases the capacity consumption. The values of $\delta,\beta,\gamma$ are chosen as described in Remark \ref{remark:sol construction}. The value of $\mu$ is set as the quotient of the difference in the objective values regarding the cost function of \eqref{eq:ip} and the difference in the capacity consumption  of the two points $(d^*,\delta^*,\beta^*,\gamma^*)$ and $(d,\delta,\beta,\gamma)$. The objective value of $(d^*,\delta^*,\beta^*,\gamma^*)$ for the problem \eqref{eq:lp} and the objective value of  $(d,\delta,\beta,\gamma,\mu)$ regarding the objective function of \eqref{eq:lr} are identical which shows the optimality of $(d,\delta,\beta,\gamma,\mu)$.

    Now let $(d^1,\delta^1,\beta^1,\gamma^1, \mu)$ and $(d^2,\delta^2,\beta^2,\gamma^2,\mu)$ with $\sum_{i,j} \delta^1_{i,j} \leq \Delta$ and $\sum_{i,j} \delta^2_{i,j} \geq \Delta$ be optimal for \eqref{eq:lr}. In the case that both points are identical, the capacity constraint is fulfilled with equality and $(d^1,\delta^1,\beta^1,\gamma^1)$ is optimal for \eqref{eq:ip} and thus for \eqref{eq:lp}. In the case that both points are not identical the existence is ensured because otherwise we could improve the bound provided by increasing or decreasing $\mu$. We now use the convex combination $(d^3,\delta^3,\beta^3,\gamma^3,\mu)$ of the two points $(d^1,\delta^1,\beta^1,\gamma^1,\mu)$ and $(d^2,\delta^2,\beta^2,\gamma^2,\mu)$ which fulfills the capacity constraint with equality. The convex combination has the same objective value as the other two points regarding the objective function of \eqref{eq:lr}. Due to the capacity constraint being fulfilled with equality the point $(d^3,\delta^3,\beta^3,\gamma^3)$ also has the same objective value regarding the objective function of \eqref{eq:lp}. Thus this proves the optimality as the bound provided by the linear programming relaxation is not larger than the bound provided by the Lagrangian relaxation. 
\end{proof}

\begin{remark}
    In particular, the previous proof showed 
    for $\Xi = \{0,1\}$
    and a solution $d$ to the linear programming relaxation \eqref{eq:lp} that $d^t$ with 
    \begin{align*}
        x_{i,j} + d^t_{i,j} = \Bigg\{\begin{matrix}
            1 &\text{ if } x_{i,j} + d_{i,j} >t, \\
            0 &\text{ otherwise}
        \end{matrix}
    \end{align*}
    is optimal for every $t \in (0,1)$ for \eqref{eq:lr}. This result parallels  the
    infinite-dimensional thresholding result in \cite[Theorem 2.2]{burger2012exact}.
\end{remark}

\section{Integer programming solver-based solution}
\label{sec: IP based}
We have shown in Section \ref{subsec: NP hard} that the problem \eqref{eq:trip} is 
strongly NP-hard under the assumption that the minimum bisection problem is
NP-hard on grid graphs with an arbitrary number of holes. The latter has been conjectured in
\cite{Papadimitriou1990TheBW}. For the minimum bisection problem on solid subgraphs of the grid the best currently known algorithm has a run time of $\mathcal{O}(n^4)$, see \cite{Feldmann2011}. Even if the binary \eqref{eq:trip} is not NP-hard we believe it is likely that a polynomial algorithm would also have a high run time complexity. Due to these reasons we propose to employ an integer programming solver for solving \eqref{eq:trip}.
Based on our previous analysis, we derive several tools to reduce the run time of the integer programming solver. Specifically, we propose cutting planes, a primal heuristic, and a branching rule.

\subsection{Valid inequalities}
\label{subsec:cuts}
We introduce two different classes of cutting planes which use the structure of the polyhedron presented in Section \ref{sec: structure}, namely that we have one single fractional component. While the original constraints describing the polyhedron are very sparse except for the capacity constraint, the resulting cuts will not be sparse, but contain a number of variables depending on the size of the fractional component. As the latter may be as large as the whole graph the resulting inequalities would in turn be very dense. Thus, in computational practice, we have to add the cuts conservatively to ensure that the improvement of the linear relaxation is more impactful on the run time than the increase in computational demand for the relaxation.
While we will assume $\Xi = \{0,1\}$ for the derivation of the cutting planes, we believe that this may be relaxed to more general sets $\Xi$.

\subsubsection{Cutting plane derived from a fully connected graph}
\label{subsec:fully connected cut}
As shown in Section \ref{sec: structure} any vertex solution to \eqref{eq:lp}
has one maximal connected component with the same values $d+x$. Compared to any feasible $\Xi$-valued points with the same capacity consumption on this component the relaxation does not create any jumps while the $\Xi$-valued points do. To penalize this behavior
we can construct a cut which enforces that capacity consumption on the fractional component is reflected in the amount of jumps. The key ingredient is the minimum cut ratio on the fractional component. 

The notation in this as well as the following subsection is as follows. The tuple $(d^*,\delta^*,\beta^*,\gamma^*)$ denotes an arbitrary
but fixed optimal solution to \eqref{eq:lp}. The set of index pairs $F = \{(i,j) \in  \Omega \ | \ x_{i,j}+d_{i,j}^* \not \in \Xi \}$ contains those for which
the solution $x + d^*$ has fractional values. The set of index pairs $H = \{(i,j) \in \Omega \ | \ x_{i,j}+d_{i,j}^* \in \Xi, d_{i,j}^* \neq 0 \}$ contains
those for which the solution $x + d^*$ has values in $\Xi$ and a strictly positive capacity consumption (the corresponding entries of $d^*$ are non-zero).
For the capacity bound $\Delta$ and the capacity consumption outside of the fractional component
$\Delta_{out} \coloneqq \sum_{(i,j) \in \Omega \setminus F} \delta_{i,j}^* = \sum_{(i,j) \in H} \delta_{i,j}^*$, we obtain that
$\Delta_r \coloneqq \Delta - \Delta_{out}$ is the implied capacity bound for the fractional component. Due to 
Theorem \ref{thm:capacity_consumption}, this coincides with the capacity consumption of $d^*$ on $F$. We denote the largest connected component in $F$
in which the previous control values $x_{i,j}$ are identical by $G$.

The component $G$ can be interpreted as a connected subgraph of the grid. 
We define the cut ratio on this subgraph as 
\[ \rho \coloneqq \min_{U \subset G,\ 0<|U| \leq \Delta_r} \frac{|\partial U|}{|U|}\]
where $\partial U = \{ (v,w) \text{ is an edge in the subgraph} \ | \ v \in U, \ w \in G \setminus U \}$ is the set of cut edges from the graph partition into the sets $U$ and $G \setminus U$.  From the definition of $\rho$
it follows that $\rho |U| \le |\partial U|$
holds for all possible subsets $U\subset G$ of the fractional component
with capacity consumption $0 < |U| \le \Delta_r$. Consequently, every
feasible integer assignment on $G$ with capacity consumption less than
or equal to $\Delta_r$ corresponds to such a subset $U$.
The cut size $|\partial U|$ is the amount of jumps for this integer assignment
within $G$. Consequently, multiplying $\rho$ with the capacity consumption
of a feasible integer assignment that consumes less than or equal to
$\Delta_r$ capacity on $G$ gives a lower bound on its amount of jumps.

\begin{theorem}
	Let $\Xi = \{0,1\}$. Let $\Delta_{out}$ be a fixed integer value between $0$ and $\Delta$. Let $G, H$ be disjoint subgraphs of the grid such that $|H| = \Delta_{out}$ and $G$ is a connected component with the same value $x_{i,j}$ for every $(i,j) \in G$ and a minimum cut ratio $\rho$. Every feasible point $(d,\delta,\beta,\gamma)$ of \eqref{eq:ip} which fulfills
    $ \sum_{(i,j) \in H} \delta_{i,j} = \Delta_{out} $
    also fulfills the inequality
\begin{align*}
0 \leq - \rho \sum_{(i,j) \in G} \delta_{i,j} + \sum_{\substack{ ((i,j),(i+1,j)) \\ \in G \times G }} \beta_{i,j} + \sum_{ \substack{((i,j),(i,j+1)) \\ \in G \times G }} \gamma_{i,j}.
\end{align*}
For an $M$ sufficiently large the inequality
\begin{align*}
    0 \leq - \rho \sum_{(i,j) \in G} \delta_{i,j} + \sum_{\substack{((i,j),(i+1,j)) \\ \in G \times G} } \beta_{i,j} + \sum_{\substack{ ((i,j),(i,j+1)) \\ \in G \times G }} \gamma_{i,j} + M (\Delta_{out} - \sum_{(i,j) \in H} \delta_{i,j})
\end{align*}
holds for every feasible point of \eqref{eq:ip}.
\end{theorem}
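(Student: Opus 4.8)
The plan is to establish the first (unconditional) inequality directly from the definition of the minimum cut ratio $\rho$ and the combinatorial structure of feasible integer points on $G$, and then obtain the big-$M$ version by a routine case distinction.

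\textbf{Step 1: Reduce to the structure of a feasible integer point on $G$.}
Let $(d,\delta,\beta,\gamma)$ be feasible for \eqref{eq:ip} with $\sum_{(i,j)\in H}\delta_{i,j} = \Delta_{out}$. By Remark \ref{remark:sol construction} we may assume $\delta_{i,j} = |d_{i,j}|$ and that $\beta,\gamma$ equal the corresponding absolute values of control jumps. Since the control is binary and $x_{i,j}$ is constant on $G$ (say equal to $x_G \in \{0,1\}$), the set $U \coloneqq \{(i,j)\in G : x_{i,j}+d_{i,j} \neq x_G\}$ is precisely the set of index pairs in $G$ on which capacity is consumed, i.e.\ $\sum_{(i,j)\in G}\delta_{i,j} = |U|$. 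Moreover, for an edge $((i,j),(i',j'))$ inside $G\times G$, the jump term $\beta_{i,j}$ (resp.\ $\gamma_{i,j}$) equals $1$ exactly when one endpoint lies in $U$ and the other does not, and $0$ otherwise; hence $\sum_{\text{edges in }G\times G}(\beta + \gamma) = |\partial U|$ where $\partial U$ is the cut set of the partition $(U, G\setminus U)$ inside the subgraph $G$.

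\textbf{Step 2: Apply the cut-ratio bound.}
If $U = \emptyset$ the claimed inequality reads $0 \le 0$ and holds trivially. If $U \neq \emptyset$, then since the total capacity consumption is at most $\Delta$ and $\Delta_{out}$ of it is used on $H$, the consumption on $G$ satisfies $|U| \le \Delta - \Delta_{out} = \Delta_r$; combined with $|U|\ge 1$ this means $U$ is an admissible set in the minimization defining $\rho$, so $\rho \le |\partial U|/|U|$, i.e.\ $\rho\, |U| \le |\partial U|$. Substituting the identities from Step 1 gives
\[
\rho \sum_{(i,j)\in G}\delta_{i,j} \;\le\; \sum_{\substack{((i,j),(i+1,j))\in G\times G}}\beta_{i,j} + \sum_{\substack{((i,j),(i,j+1))\in G\times G}}\gamma_{i,j},
\]
which is the first asserted inequality after rearranging.

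\textbf{Step 3: The big-$M$ version.}
For a feasible point with $\sum_{(i,j)\in H}\delta_{i,j} = \Delta_{out}$ the extra term $M(\Delta_{out} - \sum_{(i,j)\in H}\delta_{i,j})$ vanishes and the inequality reduces to the one just proved. For a feasible point with $\sum_{(i,j)\in H}\delta_{i,j} < \Delta_{out}$, the term $M(\Delta_{out} - \sum_{(i,j)\in H}\delta_{i,j})$ is at least $M$, while the remaining quantity $-\rho\sum_{(i,j)\in G}\delta_{i,j} + \sum(\beta+\gamma)$ is bounded below by $-\rho\sum_{(i,j)\in G}\delta_{i,j} \ge -\rho\,\Delta$ (using $\delta_{i,j}\le 1$ and $|G|\le\Delta$-type bounds, or simply $\sum\delta_{i,j}\le\Delta$), so any $M \ge \rho\Delta$ makes the whole right-hand side nonnegative. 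The case $\sum_{(i,j)\in H}\delta_{i,j} > \Delta_{out}$ cannot occur in our intended application but if desired one notes it never arises for integer points that respect the capacity split, or one simply restricts $M$ to be applied with the understanding that $\Delta_{out}\ge\sum_H\delta$; in any event choosing $M$ sufficiently large (e.g.\ $M \ge \rho\Delta$) yields validity for all feasible points.

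\textbf{Main obstacle.}
The only genuinely delicate point is Step 1: verifying that on a binary-valued feasible point the jump variables on $G$ really do coincide with the cut-set indicator of the partition induced by $U$, and that no jumps are "hidden" (this uses that $x$ is constant on $G$, so that $x+d$ takes only the two values $x_G$ and $1-x_G$ on $G$, making every internal jump exactly a boundary crossing of $U$). Everything else — the cut-ratio inequality and the big-$M$ argument — is then a short computation, with the choice of $M$ being the routine quantitative detail.
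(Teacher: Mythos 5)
Your proof is correct and follows essentially the same route as the paper: the paper's own (very terse) proof just points back to the identification of an integer assignment on $G$ with a subset $U$ satisfying $|U| = \sum_{(i,j)\in G}\delta_{i,j} \le \Delta_r$ and $|\partial U| = \sum\beta + \sum\gamma$, the bound $\rho|U|\le|\partial U|$, and a big-$M$ lifting — exactly your Steps 1--3, with your explicit choice $M \ge \rho\Delta$ being a legitimate (if crude) instance of the paper's ``sufficiently large $M$''. Note only that your wlog normalization $\delta = |d|$, $\beta,\gamma$ minimal via Remark \ref{remark:sol construction} is genuinely needed (the inequality can fail for feasible points with slack $\delta$ on $G$), but this is the same convention the paper itself adopts throughout, so your argument matches the intended statement.
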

\begin{proof}{Proof}
    The first part follows from the construction of $\rho$ and the insights presented above. The second part is the
    so-called big-M formulation of the implication. 
\end{proof}
We do not expect that we can calculate $\rho$ without significant computational demand for arbitrary subgraphs of the grid. Instead we want to use a simpler structure, a fully connected graph, for which we can determine the minimum cut ratio $\tilde \rho$ of the resulting subgraph in a straightforward manner. Thus we add the missing edges to the subgraph until we obtain the fully connected graph. For this graph we know that $|\partial U| = |G \setminus U| |U|$ and thus that the minimum cut ratio of the fully connected graph is $\tilde \rho = |G|-\Delta_r$. 

We now would have to add quadratically many variables to the constructed inequality in order to model all new edges which is not viable as it would significantly increase the solution time for the
linear programming relaxation. Instead we return to our original structure by replacing the new edges by weights on the original edges. 
Assume that the edge $e=(v,w)$ is added to construct the fully connected graph. If  $e$ is a cut edge then every path from $v$ to $w$ is also cut. We increase the weight by $1$ for all edges along a path from $v$ to $w$ in the original subgraph. We choose the path randomly among the set of shortest paths between the nodes. If we repeat this for every added edge then the sum of weighted jumps for the original edges bounds the amount of jumps for the edges of the fully connected graph from above. The success of the cutting plane also depends on $M$. We need to choose $M$ just large enough such that the inequality holds for feasible $\Xi$-valued points with a higher capacity use on the fractional component. This directly leads to the following theorem.

\begin{theorem}
 Let $\Xi = \{0,1\}$. 
 Let $\Delta_{out} \,\in \{0,\ldots,\Delta\}$. Let $G, H$ be disjoint subgraphs of the grid such that $|H| = \Delta_{out}$ and $G$ is a connected component with the same value $x_{i,j}$ for every $(i,j) \in G$ and $\tilde \rho = |G|-\Delta + \Delta_{out}>0$. Every feasible point $(d, \delta, \beta, \gamma)$ of \eqref{eq:ip} fulfills the inequality
    \begin{align}
    \label{ineq: fully connect}
    \tilde \rho \sum_{(i,j) \in G} \delta_{i,j} - M \Delta_{out} \leq &  \sum_{ \substack{((i,j),(i+1,j)) \\ \in G \times G }} w^\beta_{i,j} \beta_{i,j} + \sum_{ \substack{((i,j),(i,j+1)) \\ \in G \times G} }  w^\gamma_{i,j} \gamma_{i,j}  
     - M \sum_{(i,j) \in H} \delta_{i,j}
\end{align}
for all $M \ge M_0$, where
\begin{align*}
    M_0 \coloneqq &\tilde \rho - \min_{U \subset G, \Delta \geq |U|>\Delta_r} \frac{ (|G|-|U|) |U|  - (|G|-\Delta_r) \Delta_r }{|U|-\Delta_r} \\
    = & \tilde \rho -  \frac{ (|G|-\min\{ |G|,\Delta \} ) \min \{ |G|,\Delta \}  - (|G|-\Delta_r) \Delta_r }{ \min \{ |G|,\Delta \} -\Delta_r}.
\end{align*}
\end{theorem}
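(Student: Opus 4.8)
The plan is to reduce the assertion to an elementary inequality about ``flip sets'' on the subgraph $G$ and then settle it by distinguishing whether $G$ carries at most, or more than, $\Delta_r$ units of capacity. By the convention fixed at the end of Remark \ref{remark:sol construction} it suffices to consider a feasible point in which $\delta,\beta,\gamma$ are the minimal values compatible with $d$. Since $\Xi=\{0,1\}$ and $x+d\in\Xi^{N\times M}$ with $d$ integer, every $|d_{i,j}|\in\{0,1\}$, and on $G$ all $x_{i,j}$ equal a common value. I would introduce the flip sets $U\coloneqq\{(i,j)\in G\mid d_{i,j}\neq 0\}$ and $U_H\coloneqq\{(i,j)\in H\mid d_{i,j}\neq 0\}$, so that $\sum_{(i,j)\in G}\delta_{i,j}=|U|$, $\sum_{(i,j)\in H}\delta_{i,j}=|U_H|\le|H|=\Delta_{out}$, and for an edge of $G$ the value $\beta_{i,j}$ (resp.\ $\gamma_{i,j}$) equals $1$ exactly when that edge is cut by $U$ and $0$ otherwise. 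Writing $s\coloneqq\sum_{(i,j)\in G}\delta_{i,j}$, $t\coloneqq\sum_{(i,j)\in H}\delta_{i,j}$ and $J_G$ for the weighted jump sum on $G$, the claimed inequality is then equivalent to $\tilde\rho\,s+M\,t\le J_G+M\,\Delta_{out}$.

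The second ingredient is the lower bound $J_G\ge|U|\,(|G|-|U|)$, valid for every $U\subseteq G$ with the fixed choice of paths used in the weight construction: each edge added when completing $G$ to the fully connected graph raises the weight by $1$ along an entire path in $G$ between its endpoints, so if such an added edge is cut by $U$ its path crosses the boundary of $U$ inside $G$ at least once and deposits an extra unit of weight on some cut original edge; together with the base weight (at least $1$) of each cut original edge this sums to at least the number $|U|\,(|G|-|U|)$ of cut edges in the fully connected graph. This is precisely the bounding property announced before the theorem.

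It remains to combine the two. If $|U|\le\Delta_r$, then $|G|-|U|\ge|G|-\Delta_r=\tilde\rho$, hence $J_G\ge|U|\,(|G|-|U|)\ge\tilde\rho\,|U|=\tilde\rho\,s$, which together with $t\le\Delta_{out}$ and $M\ge0$ yields $\tilde\rho s+Mt\le J_G+M\Delta_{out}$. If $|U|>\Delta_r$, the capacity constraint $\sum_{(i,j)}\delta_{i,j}=\sum_{(i,j)}|d_{i,j}|\le\Delta$ forces $|U|+|U_H|\le\Delta$, so $t=|U_H|\le\Delta-|U|<\Delta_{out}$, the last strict inequality because $|U|>\Delta_r=\Delta-\Delta_{out}$. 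Substituting $t\le\Delta-|U|$ and $J_G\ge|U|\,(|G|-|U|)$ into $\tilde\rho s+Mt\le J_G+M\Delta_{out}$ and using $\tilde\rho=|G|-\Delta_r$, the inequality collapses to $|U|\,(|U|-\Delta_r)\le M\,(|U|-\Delta_r)$, i.e.\ $M\ge|U|$; since $|U|\le\min\{|G|,\Delta\}$ this holds whenever $M\ge\min\{|G|,\Delta\}$. Finally $\min\{|G|,\Delta\}$ is exactly the $M_0$ in the statement: the identity $\tilde\rho\,(q-\Delta_r)-\big((|G|-q)q-(|G|-\Delta_r)\Delta_r\big)=q\,(q-\Delta_r)$ shows that the secant quotient defining $M_0$ equals $\tilde\rho-q$, whose minimum over $\Delta_r<q\le\min\{|G|,\Delta\}$ is attained at $q=\min\{|G|,\Delta\}$.

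The genuinely delicate point is the weighted-cut bound $J_G\ge|U|\,(|G|-|U|)$: one must arrange the weight construction so that a single family of paths works for all subsets $U$ simultaneously, and check that the bound survives several added edges routing through one common original edge (it does, since the weights add). Everything afterwards is bookkeeping with the flip sets together with the one-line algebra above, plus the optional verification that $M_0=\min\{|G|,\Delta\}$.
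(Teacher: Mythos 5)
Your proof is correct and follows essentially the same route as the paper's (much terser) argument: you make explicit the weighted-path bound $\sum w^\beta_{i,j}\beta_{i,j}+\sum w^\gamma_{i,j}\gamma_{i,j}\ge |U|\,(|G|-|U|)$ for the flip set $U\subset G$, handle $|U|\le\Delta_r$ via the complete-graph cut ratio $\tilde\rho=|G|-\Delta_r$, and handle $|U|>\Delta_r$ by exactly the affine (secant) underestimation of the concave jump count $|U|(|G|-|U|)$ that the paper encodes in $M_0$, whose closed form $M_0=\min\{|G|,\Delta\}$ you verify correctly. One caveat: passing to minimal $\delta,\beta,\gamma$ is a true without-loss-of-generality step only for $\beta,\gamma$ (they appear on the favourable side), whereas for $\delta$ it is an interpretation rather than a reduction, since the inequality can be violated when $\delta_{i,j}>|d_{i,j}|$ is permitted on $G$; however, this reading coincides with the convention of Remark \ref{remark:sol construction} (and with the binary implementation, which eliminates $\delta$), under which the paper's statement and proof are also to be understood.
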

\begin{proof} {Proof}
    The inequality \eqref{ineq: fully connect} follows from the considerations above and using a big-M formulation. For the valid choice of $M$ we analyze both terms in its definition. The first term negates the additional effect of the first term on the left-hand side in the inequality \eqref{ineq: fully connect} if more than $\Delta_r$ capacity is used on the fractional component. The second term ensures that the inequality remains valid as the amount of jumps changes as more than $\Delta_r$ capacity is used on the fractional component. We can interpret the amount of jumps $|\partial U| = |G \setminus U| |U|$ as a concave function in $U$ meaning in the amount of capacity used on the fractional component and thus the construction of $M$ ensures that we affinely underestimate the amount of jumps for a given capacity.

\end{proof}

\begin{figure} 
    \hspace*{ -1 cm} 
    \centering
    \includegraphics[scale = 0.3]{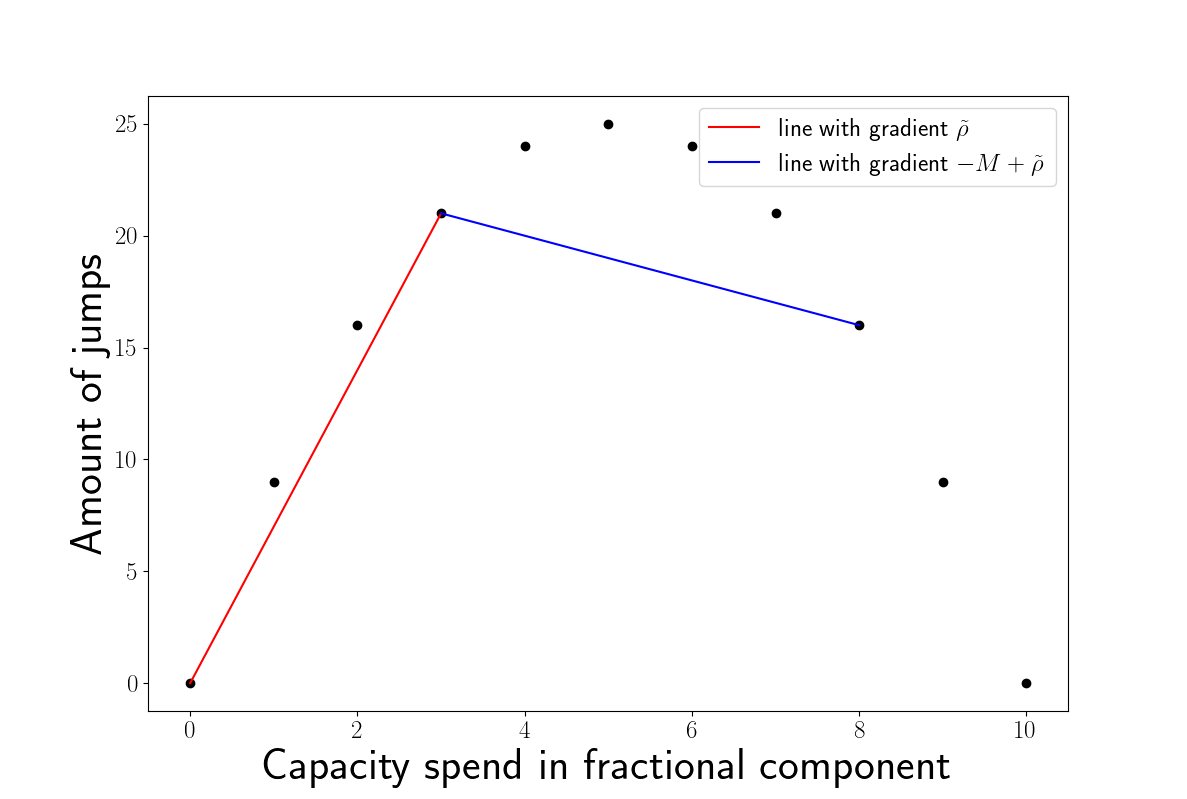}
    \caption{Visualization of the connection between the capacity used on the fractional component and the amount of jumps (cut edges) in the fully connected graph. For the example visualized above the values are given by $\Delta=8$, $\Delta_r = 3$ and $|G|=10$. We can see that $\tilde \rho$ and $M$ are chosen such that the amount of jumps is always affinely underestimated.}
    \label{fig:concave}
\end{figure}

For the optimal solution to \eqref{eq:lp}, the right hand side of the equation is equal to $-M \Delta_{out}$ as all $\beta_{i,j}$ and $\gamma_{i,j}$  are equal to zero and the sum of the $\delta_{i,j}$ regarding the subset $H$ is exactly $\Delta_{out}$ by construction. The left side of the equation is strictly larger than $M \Delta_{out}$ because $\tilde \rho >0$ and $\sum_{(i,j) \in G} \delta_{i,j} >0$. Thus this solution is cut off by the constructed inequality.

\subsubsection{Cutting plane derived from a bounding box}\label{sec:bounding_box_cut}
In the previous subsection we have used the fully connected graph to obtain a cut. It admits the drawback that the computational time to calculate the weights grows quadratically in the size of the fractional component. In the worst case the fractional component is identical to the whole graph. Thus
the trade off between the compute time of the cut and the run time reduction obtained from adding the cut might not be worth it. Instead we want to calculate a, potentially weaker, cut with significantly lower
compute time.

The idea is that instead of only considering the jumps in the fractional component we find the smallest bounding box containing the component and add a cut using the minimum cut ratio on this rectangular subgraph. We use the setting from the previous subsection and define $B \coloneqq \{ (i,j) \in \Omega \ | \ \exists (i,k), (\ell,j) \in F \}$ as the smallest bounding box containing $F$ for which we assume that the previous control in the nodes is $0$ for now. Furthermore, we define $H_B \coloneqq H \setminus B$ and $\Delta_B = \Delta- \sum_{(i,j) \in H_B} \delta_{i,j}^* $ and $\Delta_{out_B} = \sum_{(i,j) \in H_B} \delta_{i,j}^*$. 

We recall that for the bounding box we can underestimate the amount of cut edges for a given used capacity $K$ with the formula given in Lemma  \ref{lemma: box formula} which we already used for the NP-hardness conjectures. Note that in the original formulation our $B$ was called $G$.

\begin{theorem}
   Let $\Xi = \{0,1\}$.	
   Let $\Delta_{out_B}\ \in \{0,\ldots,\Delta\}$. Let $B$ be a $\tilde n \times \tilde m$ rectangular subgraph of the grid with $x_{i,j}=0$ for all $(i,j) \in B$ and $H_B$ be a subgraph of the grid with $|H_B|= \Delta_{out_B}$. Let $B$ and $H_B$ be disjoint. Then every feasible point of \eqref{eq:ip} fulfills
   \begin{align*}
       c_\rho \sum_{(i,j) \in B} \delta_{i,j} \leq \sum_{\substack{((i,j),(i+1,j)) \\ \in B \times B} } \beta_{i,j} + \sum_{\substack{ ((i,j),(i,j+1)) \\ \in B \times B }} \gamma_{i,j} + M (\Delta_{out_B}- \sum_{(i,j) \in H_B} \delta_{i,j}).
\end{align*}
with  $c_\rho \coloneqq \min_{0<K\leq \Delta-\Delta_{out_B}} \frac{\min \{ \lfloor \sqrt{K} \rceil + \lceil \sqrt{K} \rceil, \tilde n, \tilde m, \lfloor \sqrt{\tilde n \tilde m - K} \rceil +\lceil \sqrt{\tilde n \tilde m - K} \rceil \} }{K}$ and $M \coloneqq c_{\rho}+2$.
\end{theorem}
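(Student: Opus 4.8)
The plan is to run the same argument as for the fully connected cut, replacing the minimum cut ratio of the fully connected graph by the rectangular estimate of Lemma~\ref{lemma: box formula}. First, following Remark~\ref{remark:sol construction}, I would restrict attention to feasible points of \eqref{eq:ip} in which $\delta,\beta,\gamma$ are the minimal values induced by $d$; this normalization is genuinely needed (the inequality can be violated by inflating $\delta$ on $B$ while keeping $d$ fixed) and is the convention used throughout the paper. Since $x_{i,j}\in\{0,1\}$ and $x_{i,j}+d_{i,j}\in\Xi=\{0,1\}$ at a feasible point, we have $d_{i,j}\in\{-1,0,1\}$ everywhere, and the assumption $x\equiv 0$ on $B$ then forces $d|_B$ to be $\{0,1\}$-valued, so that it is encoded by $U\coloneqq\{(i,j)\in B : d_{i,j}=1\}$. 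Writing $K\coloneqq|U|=\sum_{(i,j)\in B}\delta_{i,j}$, $\Delta_B\coloneqq\Delta-\Delta_{out_B}$ and $b(K)\coloneqq\min\{\lfloor\sqrt{K}\rceil+\lceil\sqrt{K}\rceil,\tilde n,\tilde m,\lfloor\sqrt{\tilde n\tilde m-K}\rceil+\lceil\sqrt{\tilde n\tilde m-K}\rceil\}$, the two sums over $B\times B$ on the right-hand side equal the number $|\partial U|$ of edges of the rectangular subgraph $B$ cut by $U$, and Lemma~\ref{lemma: box formula} gives $|\partial U|\ge b(K)$. From $|d_{i,j}|\le 1$ and the disjointness of $B$ and $H_B$, the capacity constraint yields both $\sum_{(i,j)\in H_B}\delta_{i,j}\le|H_B|=\Delta_{out_B}$ and $K+\sum_{(i,j)\in H_B}\delta_{i,j}\le\Delta$.

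Next I would distinguish two cases according to whether $K\le\Delta_B$. If $K\le\Delta_B$, then $\Delta_{out_B}-\sum_{(i,j)\in H_B}\delta_{i,j}\ge 0$, so the big-$M$ term is nonnegative; for $K=0$ the inequality is trivial, and for $0<K\le\Delta_B$ the definition of $c_\rho$ gives $c_\rho K\le b(K)\le|\partial U|$, which is already dominated by the right-hand side. If $K>\Delta_B$, then $\sum_{(i,j)\in H_B}\delta_{i,j}\le\Delta-K$ forces $\Delta_{out_B}-\sum_{(i,j)\in H_B}\delta_{i,j}\ge K-\Delta_B\ge 1$, so the right-hand side is at least $b(K)+M(K-\Delta_B)$. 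Substituting $M=c_\rho+2$ and using $c_\rho\Delta_B\le b(\Delta_B)$ together with the monotonicity claim $b(K)+2K\ge b(\Delta_B)+2\Delta_B$ for $K\ge\Delta_B$, this lower bound equals $b(K)+2(K-\Delta_B)+c_\rho(K-\Delta_B)\ge b(\Delta_B)+c_\rho(K-\Delta_B)\ge c_\rho\Delta_B+c_\rho(K-\Delta_B)=c_\rho K$, which closes the case. The degenerate situations $\Delta_{out_B}=\Delta$ (where $c_\rho$ is vacuous) and $\Delta_B\ge\tilde n\tilde m$ (where only the first case occurs) are treated separately.

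The heart of the argument — and the step I expect to be the main obstacle — is the monotonicity claim that $K\mapsto b(K)+2K$ is non-decreasing on $\{0,\dots,\tilde n\tilde m\}$; this is precisely what pins down the additive constant $2$ in $M=c_\rho+2$ and is the discrete counterpart of the concavity/affine-underestimation picture used for the fully connected cut (cf.\ Figure~\ref{fig:concave}). I would establish it by writing $b(K)+2K=\min\{A(K)+2K,\ \tilde n+2K,\ \tilde m+2K,\ C(K)+2K\}$ with $A(K)\coloneqq\lfloor\sqrt{K}\rceil+\lceil\sqrt{K}\rceil$ and $C(K)\coloneqq\lfloor\sqrt{\tilde n\tilde m-K}\rceil+\lceil\sqrt{\tilde n\tilde m-K}\rceil$: a minimum of non-decreasing functions is non-decreasing, so it suffices to check that each of the four arguments is non-decreasing in $K$, which reduces to the elementary fact that $\ell\mapsto\lfloor\sqrt{\ell}\rceil+\lceil\sqrt{\ell}\rceil$ is non-decreasing with consecutive increments at most $2$ (giving $A(K)+2K$ non-decreasing and $C(K+1)-C(K)\ge-2$). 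That fact follows from a short case analysis of the rounding functions: $\lceil\sqrt{\ell}\rceil$ only increases when $\ell$ crosses a perfect square, $\lfloor\sqrt{\ell}\rceil$ only increases when $\ell$ crosses a value of the form $p^2+p$, and these two events coincide only for the step from $\ell=0$ to $\ell=1$, where the increment is exactly $2$. With this lemma in hand, the remaining bookkeeping and the big-$M$ formulation of the implication conclude the proof exactly as for the fully connected cut.
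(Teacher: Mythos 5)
Your proposal is correct and takes essentially the same route as the paper's own proof: Lemma \ref{lemma: box formula} covers the case $K \le \Delta-\Delta_{out_B}$, and for larger $K$ the big-$M$ slack of at least $K-\Delta_B$ combined with the fact that the jump lower bound drops by at most $2$ per additional node set to $1$ justifies $M=c_\rho+2$; your explicit monotonicity lemma that $K\mapsto b(K)+2K$ is non-decreasing is precisely the step the paper asserts informally by appealing to the proof of Lemma \ref{lemma: box formula}. Your additional observation that the statement must be read under the normalization of Remark \ref{remark:sol construction} (since inflating $\delta$ on $B$ with $d$ fixed can violate the inequality while remaining feasible for \eqref{eq:ip}) is accurate and makes explicit a convention the paper uses implicitly.
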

\begin{proof} {Proof}
    The Lemma \ref{lemma: box formula} showed that $c_\rho$ is a lower bound for the amount of jumps in a $\tilde n \times \tilde m$ rectangular graph if at most $\Delta-\Delta_{out_B}$ nodes are set to $1$. Thus, it follows that the inequality holds if the $M$ is chosen sufficiently large. In the proof of Lemma \ref{lemma: box formula} it was shown that the amount of jumps for $K$ nodes set to $1$ and $K+1$ nodes set to $1$ differs by at most $2$. It follows that the first term in the definition of $M$ negates the effect of the term on the left-hand side for values of $K$ larger than $\Delta-\Delta_{out_B}$ while the second term accounts for the highest possible decrease in the amount of jumps which is no more than $2$ for each additional node set to $1$.  
\end{proof}

The relevant computational demands are determining the bounding box and the capacity used by the solution to \eqref{eq:lp} in the nodes of the bounding box. These demands are linear in the size of the bounding box. In the worst case the size of the bounding box is quadratic in the size of the fractional component, but is bounded by the size of the grid which ensures that the calculation of the cut is significantly faster than the calculation of the previous cut for large fractional components.

\begin{remark}
    For the construction we have assumed that the previous controls in all nodes in the whole bounding box are $0$. This is however not needed and only done for the sake of clarity. If $c_2$ is the number of nodes with a previous control of $1$ the described cuts are valid if we instead use the formula $\min \{ \lfloor \sqrt{K} \rceil + \lceil \sqrt{K} \rceil, \tilde n, \tilde m, \lfloor \sqrt{\tilde n \tilde m - K-c_2} \rceil +\lceil \sqrt{\tilde n \tilde m - K-c_2} \rceil \}$ to underestimate the amount of jumps and only consider $\delta_{i,j}$ on the left-hand side with $x_{i,j}=0$. This is a direct consequence of applying the original formula in Lemma \ref{lemma: box formula} with $\tilde K = K + n_c$ for $n_c \in [0,c_2] \cap \mathbb{N}$.
\end{remark}

\subsection{Primal heuristics}
\label{subsec: primal}
We have already seen in Subsection \ref{subsec: connect graph} that substructures of the grid can correspond to one-dimensional problems of \eqref{eq:trip}. We want to use this property to improve any feasible point found by the integer programming solver. Let $(\tilde d, \tilde \delta, \tilde \beta, \tilde \gamma)$ be a feasible point of $\eqref{eq:ip}$ with $N=M$. We assume $N$ to be even for sake of clarity but the arguments hold for odd $N$. We consider the problem 
\begin{gather}\label{eq:ip-1D}
\begin{aligned}
    \min_{d,\delta,\beta,\gamma}\quad &  \sum_{i=1}^N \sum_{j=1}^M c_{i,j} d_{i,j} + \alpha  \left (\sum_{j=1}^{M} \sum_{i=1}^{N-1} \beta_{i,j} +  \sum_{i=1}^{N} \sum_{j=1}^{M-1} \gamma_{i,j} \right )\\
    \text{s.t.}\quad
    &  (d,\delta,\beta,\gamma) \in P_\Delta,\\    
    &  d \in \mathbb{Z}^{N \times M} \text{ and } d_{i,j} = \tilde d_{i,j} \text{ for } i \in \{2,4,\ldots,N\}, j \in \{1,\ldots, N\}.
\end{aligned}
\tag{red IP}
\end{gather}
Like the one-dimensional problem, the problem \eqref{eq:ip-1D} can be solved by a shortest-path approach with the same graph structure and a slight variation of the weights to consider the jumps to the fixed nodes, see Appendix \ref{appendix; primal improve} for more details. We now show that a solution of \eqref{eq:ip-1D} will always have an objective value no worse than the point $(\tilde d, \tilde \delta, \tilde \beta, \tilde \gamma)$  used for the construction of the problem.

\begin{theorem}
    Let $(\tilde d, \tilde \delta, \tilde \beta, \tilde \gamma)$ be a feasible point of $\eqref{eq:ip}$ with $N=M$. Then every solution of the problem \eqref{eq:ip-1D} is feasible for \eqref{eq:ip} and has a lower or equal objective value compared to the objective value of  $(\tilde d, \tilde \delta, \tilde \beta, \tilde \gamma)$ regarding the objective function of \eqref{eq:ip}.
\end{theorem}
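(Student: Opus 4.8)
The plan is to observe that the feasible set of \eqref{eq:ip-1D} is nonempty — indeed the point $(\tilde d, \tilde \delta, \tilde \beta, \tilde \gamma)$ itself satisfies the additional equality constraints $d_{i,j} = \tilde d_{i,j}$ for $i \in \{2,4,\ldots,N\}$ and all $j$, since those are merely a subset of the coordinates of $\tilde d$. Moreover $(\tilde d, \tilde \delta, \tilde \beta, \tilde \gamma) \in P_\Delta$ by assumption, so it is feasible for \eqref{eq:ip-1D}.

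Next I would note that any solution $(d,\delta,\beta,\gamma)$ of \eqref{eq:ip-1D} lies in $P_\Delta$ with $d \in \mathbb{Z}^{N\times M}$, hence is feasible for \eqref{eq:ip}: the constraints of \eqref{eq:ip-1D} are exactly those of \eqref{eq:ip} together with the extra equalities, so relaxing the extra equalities only enlarges the feasible set. This gives the first claim.

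For the objective comparison: \eqref{eq:ip-1D} has the same objective function as \eqref{eq:ip}, and it is a minimization over a feasible set that contains $(\tilde d, \tilde \delta, \tilde \beta, \tilde \gamma)$. Therefore the optimal value of \eqref{eq:ip-1D} is at most the objective value of $(\tilde d, \tilde \delta, \tilde \beta, \tilde \gamma)$, and hence every solution of \eqref{eq:ip-1D} attains an objective value no larger than that of $(\tilde d, \tilde \delta, \tilde \beta, \tilde \gamma)$. (Implicitly one also uses Remark \ref{remark:sol construction} to say that when we speak of ``a solution of \eqref{eq:ip-1D}'' we may take $\delta,\beta,\gamma$ minimal for the given $d$, which can only improve the objective; but this is not even needed since $(\tilde d,\tilde\delta,\tilde\beta,\tilde\gamma)$ is directly a competitor.)

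The only substantive point — and the reason the theorem is worth stating — is not the inequality itself, which is essentially definitional, but rather the fact claimed in the surrounding text that \eqref{eq:ip-1D} is tractable: fixing every even row decouples the odd rows, so the residual problem over the odd rows, with the fixed even rows contributing known ``boundary'' jump terms $\gamma_{i,j}$, splits into independent one-dimensional \eqref{eq:trip}-type problems (one per odd row) coupled only through the single knapsack-type capacity constraint, and can be handled by the shortest-path/dynamic-programming machinery of \cite{severitt2023efficient} with modified edge weights. I expect the main obstacle, if any, to be making precise that this decoupling is exact — i.e.\ that every horizontal jump term $\beta_{i,j}$ either lies entirely within the fixed rows (a constant, droppable from the optimization) or connects a free row to an adjacent fixed row (hence becomes a linear boundary term in the free variables of that row), so that no genuine two-dimensional coupling survives except the capacity bound. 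But for the statement as written only the feasibility-and-domination argument above is required, and that part is routine.
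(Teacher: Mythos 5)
Your argument is correct and is essentially identical to the paper's proof: the feasible set of \eqref{eq:ip-1D} is a subset of that of \eqref{eq:ip} with the same objective, and it contains $(\tilde d, \tilde \delta, \tilde \beta, \tilde \gamma)$, so any minimizer is feasible for \eqref{eq:ip} and dominates that point. The additional remarks on tractability are outside the scope of the stated theorem but accurately reflect the surrounding discussion.
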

\begin{proof} {Proof}
    The problem \eqref{eq:ip-1D} can be derived from \eqref{eq:ip} by adding the constraints
    $d_{i,j} = \tilde d_{i,j} \text{ for } i \in \{2,4,\ldots,N\}, j \in \{1,\ldots, N\}$.
    Thus the objective functions conincide and the feasible set of \eqref{eq:ip-1D} is a subset of \eqref{eq:ip}.
    The point $(\tilde d, \tilde \delta, \tilde \beta, \tilde \gamma)$ is feasible for \eqref{eq:ip-1D}. Thus any solution of \eqref{eq:ip-1D} has a objective value lower or equal to the objective value of $(\tilde d, \tilde \delta, \tilde \beta, \tilde \gamma)$ and is feasible for \eqref{eq:ip}.
\end{proof}

 We can now use this improved feasible point to construct a new problem of the form \eqref{eq:ip-1D} with different fixed $d$ entries and repeat this process. In our algorithm we alternately fix the entries $d_{i,j}$ with $i \in \{1,\ldots,N\}, j \in \{1,3, \ldots, N-1\}$, then $d_{i,j}$ with $i \in \{1,\ldots,N\}, j \in \{2,4, \ldots, N\}$, then $d_{i,j}$ with $i \in \{1,3,\ldots,N-1\}, j \in \{1, \ldots, N\}$ and finally  $d_{i,j}$ with $i \in \{2,4,\ldots,N\}, j \in \{1, \ldots, N\}$. We continue this process until none of the $4$ variations produce an improved feasible point. This process terminates finitely as there are only finitely many feasible points for the original problem $\eqref{eq:trip}$ and in each finite loop we improve the objective value.

\subsection{Branching rules}
\label{subsec: branch}
We propose a branching rule motivated by the primal heuristic.
In the previous subsection we observed that fixing half of the nodes allows us to obtain a one-dimensional problem and use a shortest path approach to obtain an optimal solution. This shows us that fixing the nodes in an order which follows this strategy might be preferable in order to come closer to a problem class that is more efficiently solvable as seen in Subsection \ref{subsec: primal}. Thus we choose to fix the nodes in even rows first as these nodes are most significant to progress towards the case that all even rows are fixed and we optimize over the remaining nodes.

\section{Computational experiments} \label{sec: comp exp}
To assess the performance of the tools analyzed in Section \ref{sec: IP based} we introduce an advection-diffusion problem in Subsection \ref{sec:benchmark} that serves as our benchmark problem. We run the SLIP algorithm proposed in \cite{leyffer2022sequential} for a uniform square grid of size $N \times N$ and binary controls $\Xi = \{0,1\}$ to produce subproblems of the form \eqref{eq:trip} for $5$ different values for the parameter $\alpha$. We compare $8$ different combinations of the proposed tools for the time to reach optimality and the gap closed after a given time. Further details are given in Subsection \ref{sec:setup}. Moreover, we examine if the size of the fractional component in the root linear programming relaxation is an indicator for the hardness of the problem \eqref{eq:trip} and approximation quality of the Lagrangian relaxation.

\subsection{Benchmark problem}\label{sec:benchmark}
Our advection-diffusion benchmark problem on $\Omega=(0,1)^2$ reads
\begin{gather}
\begin{aligned}
\min_{u,w} & \frac{1}{2} ||u-u_d||^2_{L^2(\Omega)} + \alpha \TV(w)\\
\text{s.t. } & - \varepsilon \Delta  u + c  \cdot  \nabla u = w \text{ in }\Omega,\ u |_{\Gamma_1} = 0,\ u|_{\Gamma_2}(x,y) = \sin(2\pi(x-0.25)),\\
& w(s) \in \{0,1\}
\end{aligned}    \label{diffusion} \tag{AD}
\end{gather}
with $\varepsilon=0.075$ and $c= (\cos(\pi / 32), \sin(\pi / 32)) )^T$. 
For the boundary we define
two subsets $\Gamma_1= ([0,0.25) \cup (0.75,1]) \times \{0\} \cup \{0,1\} \times (0,1) $  and $\Gamma_2 = [0.25,0.75] \times \{0\}$ for Dirichlet
boundary conditions. The remaining subset has a free boundary condition. We execute the SLIP algorithm
on discretizations of the domain and PDE. We use the python package FEniCSx, see \cite{BasixJossfenicsx,ScroggsEtal2022fenicsx,BarattaEtal2023fenicsx,AlnaesEtal2014fenicsx}, for the discretization of the domain, the PDE, and the gradient computation, where we follow a \emph{first-discretize, then-optimize} principle.

\subsection{Computational Setup}
\label{sec:setup}
We run the SLIP algorithm with the values $N=M \in \{32, 64\}$ and $\alpha \in \{4 \times 10^{-4}, 4\sqrt{5} \times 10^{-4}, 2\times 10^{-3}, 2\sqrt{5} \times 10^{-3}, 1 \times 10^{-2}  \}$ as well as $N=M=96$ and $\alpha \in \{ 4\sqrt{5} \times 10^{-4}, 2 \times 10^{-3}, 2\sqrt{5} \times 10^{-3} \}$, where we
chose only three values due to the computational demand. We set $\Delta^0= \frac{1}{16} N^2$ and choose $\sigma = 0.0001$ in Algorithm \ref{alg:slip}. To compare the approaches we solve each of the subproblems with the different combinations of tools that are detailed in Table \ref{tab: abbreviations}. We model the integer programs without the variables $\delta$ because they are not needed for the binary case as we can just use $d$ as the absolute value and add signs in the remaining inequalties depending on the previous control value. We employ the integer programming solver Gurobi 10.0.0, see \cite{gurobi}, and set a time limit of $1$ hour before returning the current best primal point in the subproblem solver to generate instances of the form \eqref{eq:trip}. We keep the default optimality tolerances of Gurobi which means a solution is considered optimal if the gap is less than 0.001. We include the time to build the model in Gurobi in our measurements. Thus the results include the whole run times of the subproblems but not the whole SLIP algorithm. The tools are implemented in C++ and we use pybind11, see \cite{pybind11}, to call the implemented functions from python. The cuts are added as lazy constraints to ensure that they are added to the model description. For a fair comparison the value \emph{PreCrush} is set to $1$ even when
no cuts are added because this setting significantly reduced the run time in our preliminary experiments. The laptop for the experiments has an Intel(R) Core i7(TM) CPU with eight cores
clocked at 2.5\,GHz and 64 GB RAM.
\begin{table}
\caption{Abbreviations for the different solution approaches (combinations of tools from Section \ref{sec: IP based}).}\label{tab: abbreviations}
\begin{center}
\begin{tabular}
{l | c c c c c c c c}
tools (Section) & none & p & b & c & p-b & p-c & b-c & p-b-c\\
\midrule
Primal (Section \ref{subsec: primal})  & - & x & - & - & x & x & - & x \\
Branching Rule (Section \ref{subsec: branch}) & - & - & x & - & x & - & x & x \\
Cuts (Section \ref{subsec:cuts}) & - & - &- & x & - & x & x &x 
\end{tabular}
\end{center}
\end{table}

\subsection{Comparison of the computational results for the different approaches}
 For the smallest value $N=32$ solving all $114$ subproblems, see Table \ref{tab: number inst}, only takes around $2$ to $3$ minutes. The approaches \emph{b-c} and 
 \emph{p-b-c} produce the best cumulative times with $115$ and $116$ seconds as seen in Table \ref{tab: total}. On average it takes both approaches 1 second to solve an instance of this size. In general, the approaches using cuts (\emph{c}, \emph{p-c}, \emph{b-c}, \emph{p-b-c}) perform better compared to the alternatives which is also reflected in the median run times depicted in Table \ref{tab: median} where the approach \emph{c} performs best. The approaches \emph{b-c} and \emph{p-b-c} perform best with a cumulative run time improvement of around 25 percent compared to the approach \emph{none}.

\begin{table} 
\centering
\caption{Number of instances produced by the SLIP algorithm for the different values for $N$ and $\alpha$.}
\label{tab: number inst}
\begin{tabular}
{r | c c c c c}
$N$ & $ \alpha =4  \times 10^{-4}$ & $\alpha = 4 \sqrt{5} \times 10^{-4} $ & $ \alpha = 2 \times 10^{-3}$ & $ \alpha = 2 \sqrt{5} \times 10^{-3}$ & $ \alpha  = 1 \times 10^{-2}$  \\
\midrule
$N=32$ & 40 & 34 & 24 &15 &1 \\
$N=64$ & 70 & 38 & 53 &19 & 1 \\
$N=96$ & - & 74 & 173 & 35 & - 
\end{tabular}
\end{table}

\begin{table} 
	\caption{Cumulative run times of the different approaches for $N \in \{32,64,96\}$ and the $5$ values for $\alpha$. We note that \emph{all} is the cumulative run time for all instances for the value of $N$.}
	\label{tab: total}
\centering
\begin{tabular}
{l r| r r r r r r r r}
N & $\alpha$ & none& p & b & c & p-b & p-c & b-c & p-b-c \\
\midrule
 32 & $4 \times 10^{-4} $ &\textbf{19} & 20 & 20 & 22 & 21 & 22 & 22 & 22 \\
 & $4 \sqrt{5} \times 10^{-4}$ & 23 & 23 & 23 & \textbf{22} & 24 & 23 & 23 & 23\\
 & $2 \times 10^{-3} $ &35 & 36 & 36 & \textbf{27} & 37 & \textbf{27} & \textbf{27} & \textbf{27}\\
 & $2 \sqrt{5} \times 10^{-3}$ &48 & 47 & 40 & 33 & 39 & 31 & \textbf{30} & \textbf{30} \\
 & $1 \times 10^{-2}$ &30 & 31 & 23 & 17 & 23 & 17 & \textbf{14} & \textbf{14}\\
  & all& 154 & 157 & 142 & 120 & 143 & 120 & \textbf{115} & 116  \\ 
\midrule
64 & $4 \times 10^{-4} $ &2905 & 2932 & 2646 & 1492 & 2670 & 1470 & 971 & \textbf{955} \\
 & $4 \sqrt{5} \times 10^{-4}$  &1540 & 1537 & 1296 & \textbf{594} & 1289 & 606 & 645 & 706\\
 & $2 \times 10^{-3} $ &7343 & 7437 & 5370 & 3625 & 5497 & 3531 & 2894 & \textbf{2876}\\
 & $2 \sqrt{5} \times 10^{-3}$ &16643 & 14554 & 8586 & 9326 & 8822 & 8874 & 6624 & \textbf{6448}\\
 & $1 \times 10^{-2}$ &\textbf{3600} & 3601 & 3601 & 3602 & 3601 & 3601 & 3601 & 3601\\
 &  all& 32032 & 30061 &21498 & 18639 & 21878 & 18082 & 14736 & \textbf{14586}  \\  
\midrule
  96& $4 \sqrt{5} \times 10^{-4}$ & 61292 & - & - & - & - & - & - & \textbf{46382}\\
 & $2 \times 10^{-3} $ &383256 & - & - & - & - & - & - & \textbf{350115}\\
 & $2 \sqrt{5} \times 10^{-3}$ &78743 & - & - & - & - & - & - & \textbf{65587} \\
 & all & 523292 & - & - & - & - & - & - & \textbf{462083}    
\end{tabular}
\end{table}

\begin{table}
\caption{Median run times of the different approaches for $N \in \{32,64,96\}$ and the $5$ values for $\alpha$. We note that \emph{all} is the median run time for all instances for the value of $N$.} 
\label{tab: median}	
\adjustbox{max width=\textwidth}{%
\centering
\begin{tabular}
{l r | r r r r r r r r}
N & $\alpha$ & none& p & b & c & p-b & p-c & b-c & p-b-c \\
\midrule
 32 & $4 \times 10^{-4} $ & \textbf{0.31} & 0.32 & 0.33 & 0.43 & 0.34 & 0.50 & 0.44 & 0.47 \\
 & $4 \sqrt{5} \times 10^{-4}$ &0.68 & 0.63 & 0.66 & \textbf{0.61} & \textbf{0.61} & 0.64 & 0.67 & 0.69\\
 & $2 \times 10^{-3} $ &1.28 & 1.24 & 1.29 & 1.00 & 1.32 & 1.00 & \textbf{0.89} & 0.90\\
 & $2 \sqrt{5} \times 10^{-3}$ &2.71 & 2.64 & 2.35 & 1.67 & 2.33 & 1.66 & \textbf{1.57} & 1.62 \\
 & $1 \times 10^{-2}$ &29.71 & 30.72 & 22.55 & 16.83 & 22.59 & 16.80 & 13.79 & \textbf{13.76}\\
 & all &0.74 & 0.77 & 0.78 & \textbf{0.67} & 0.83 & 0.69 & 0.71 & 0.72  \\ 
\midrule
 64 & $4 \times 10^{-4} $ &12.47 & 13.38 & 12.11 & 11.32 & 12.18 & 11.42 & \textbf{10.89} & 11.00 \\
 & $4 \sqrt{5} \times 10^{-4}$ &10.98 & 11.43 & 10.26 & 9.23 & 10.35 & 8.66 & 8.95 & \textbf{8.44}\\
 & $2 \times 10^{-3} $ &82.84 & 83.35 & 61.40 & 37.48 & 63.12 & 37.40 & 29.83 & \textbf{29.66}\\
 & $2 \sqrt{5} \times 10^{-3}$ &153.04 & 152.87 & 137.63 & 83.62 & 137.80 & \textbf{83.58} & 86.41 & 86.38 \\
 & $1 \times 10^{-2}$ &\textbf{3600.46} & 3600.57 & 3600.55 & 3602.37 & 3600.54 & 3601.04 & 3601.15 & 3601.15\\
 &  all & 26.78 & 26.88 & 23.00 & 16.95 & 23.77 & 16.33 & 15.02 & \textbf{14.40}  \\ 
\midrule
   96& $4 \sqrt{5} \times 10^{-4}$ & 298.61 & - & - & - & - & - & - & \textbf{121.39}\\
 & $2 \times 10^{-3} $ & 2617.74 & - & - & - & - & - & - & \textbf{2433.26}\\
 & $2 \sqrt{5} \times 10^{-3}$ &3600.93 & - & - & - & - & - & - & \textbf{1705.99}\\
 & all & 1579.09 & - & - & - & - & - & - & \textbf{989.70}  
\end{tabular}}
\end{table}
 This behaviour extends to the case with $N=64$ with an improvement of $46$ percent. For this discretization most instances are solved within $600$ seconds for the different approaches. We note that the approaches using the cuts (\emph{c}, \emph{p-c}, \emph{b-c}, \emph{p-b-c}) perform better than those which do not. In general, the approaches which include the primal heuristic (\emph{p}, \emph{p-b}, \emph{p-c}, \emph{p-b-c}) only seem to improve the run time if combined with the cuts (\emph{p-c}, \emph{p-b-c}). Both observations are illustrated by the performance plots in Figure \ref{fig:performance}. 
The best approach for both $N=32$ and $N=64$ is the combination of all the tools as the cumulative run times are lowest or second lowest, see Table \ref{tab: total}. For $N=64$ and $\alpha = 1 \times 10^{-2}$ the instance could not be solved by any time limit. All approaches produce the same primal point with objective value $0$. The objective lower bounds produced vary from $-0.18$ by \emph{none} to $-0.08$ by \emph{p-b-c}. 

\begin{figure}
\begin{subfigure}{0.5\textwidth}
\centering
\includegraphics[scale = 0.275]{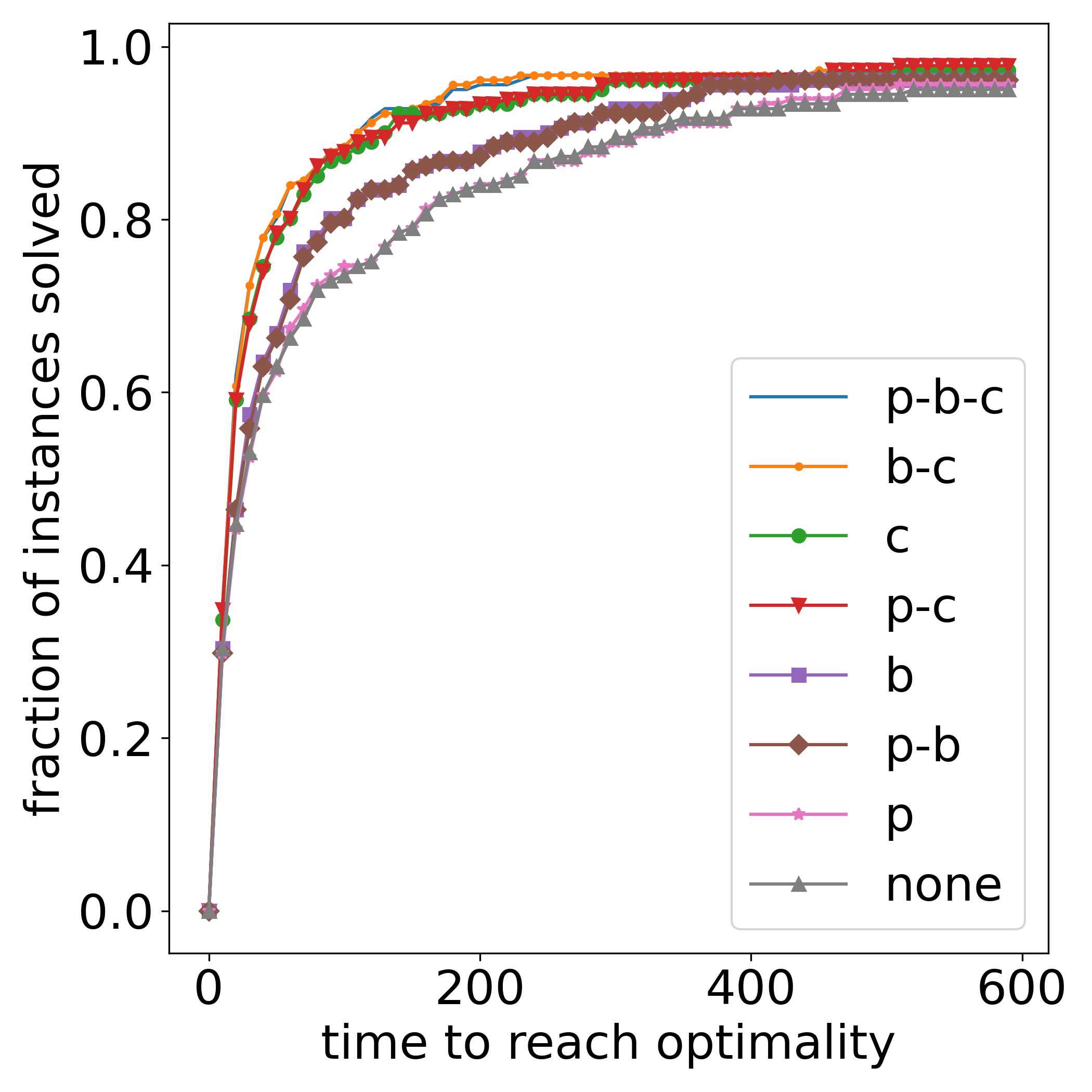}
\end{subfigure}
\begin{subfigure}{0.5\textwidth}
\centering
\includegraphics[scale = 0.278]{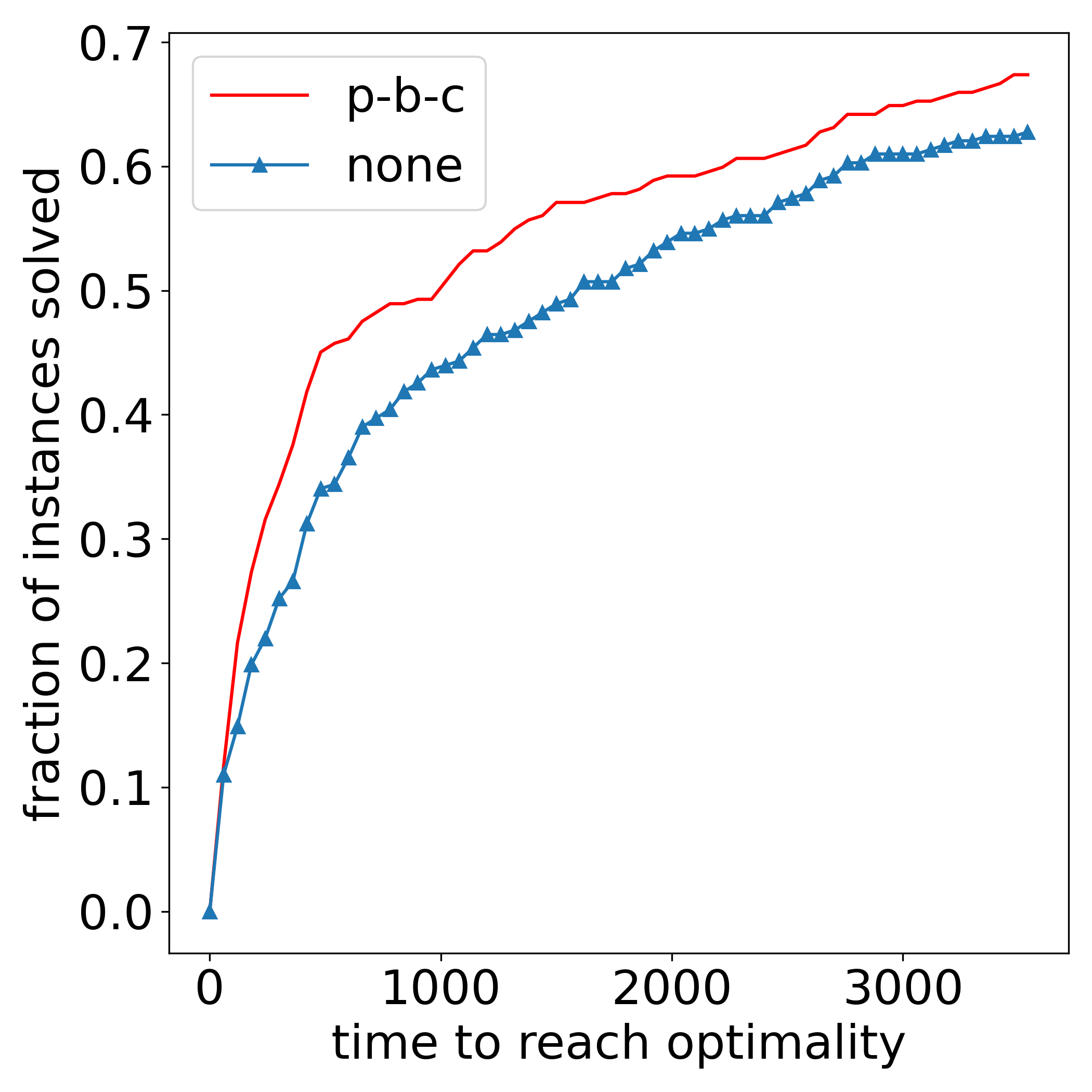}
\end{subfigure}
\caption{Performance plots for $N=64$ (left) and $N=96$ (right). The plots visualize what fraction of the instances are solved after a given time. For $N=64$ nearly all instances are solved after $600$ seconds while for $N=96$ a significant number of instances are not solved within the time limit of $1$ hour.}
\label{fig:performance}
\end{figure}

For $N=96$ we only compare the approaches with no tools and all tools combined. An improvement of $12$ percent for the cumulative run time is achieved by employing all proposed tools. We note that a larger improvement of $37$ percent is achieved regarding the median run times. This effect might be due to the significant number of instances reaching the time limit of $1$ hour which is visualized by the performance plot in Figure \ref{fig:performance}. 
There were $68$ instances which could not be solved within the time limit by either approach. Additionally $23$ instances could not be solved by \emph{p-b-c} but could be solved by \emph{none}, while the opposite case occurred for $37$ instances.
The mean duality gap is reduced from $2.6$ percent to $2.3$ percent by \emph{p-b-c}.  Furthermore, the highest occurring duality gap is also reduced as visualized by the violin plot in Figure \ref{fig:gap and violin}. The primal values were significantly better (exceeding the tolerance of the integer programming solver) for the approach "none" in $15$ cases while \emph{p-b-c} produced significantly better primal values in $23$ cases. 

We reran the experiments with a time limit of $3$ hours for all instances only solved by either \emph{p-b-c} or \emph{none} to get a clearer picture. We see that now the new cumulative run times of the approaches are 530781 seconds for \emph{p-b-c} and 655740 seconds for \emph{none}. Thus \emph{p-b-c} improves the run time by 19 percent compared to \emph{none}. We note that 3 instances for \emph{p-b-c} still reached the new time limit while 7 instances reached the new time limit for \emph{none}.

In many cases the feasible point obtained from the Lagrangian relaxation did not produce an approximation as no capacity was used. However, for $N=32$ there were $21$ instances for which at least half of the capacity was used by the feasible point. This effect gets smaller as $N$ increases as for $N=64$ there were $16$ such instances while for $N=96$ only $5$ instances produced such a feasible point from the Lagrangian relaxation. In these cases the approximation guarantee from  Theorem \ref{theorem: cond p} was always achieved. In $5$ cases the feasible point was optimal and in the remaining cases the feasible point was strictly better than the approximation guarantee derived in Theorem \ref{theorem: cond p}.

In Figure \ref{fig: size-time} we see that a larger fractional component in the root linear program corresponds to a higher run time. We note that for $N=96$ the linear regression is negatively impacted by the large amount of instances reaching the time limit and other regressions may be a better fit for the data but still shows the general trend.

\begin{figure}
\centering
\includegraphics[scale = 0.2]{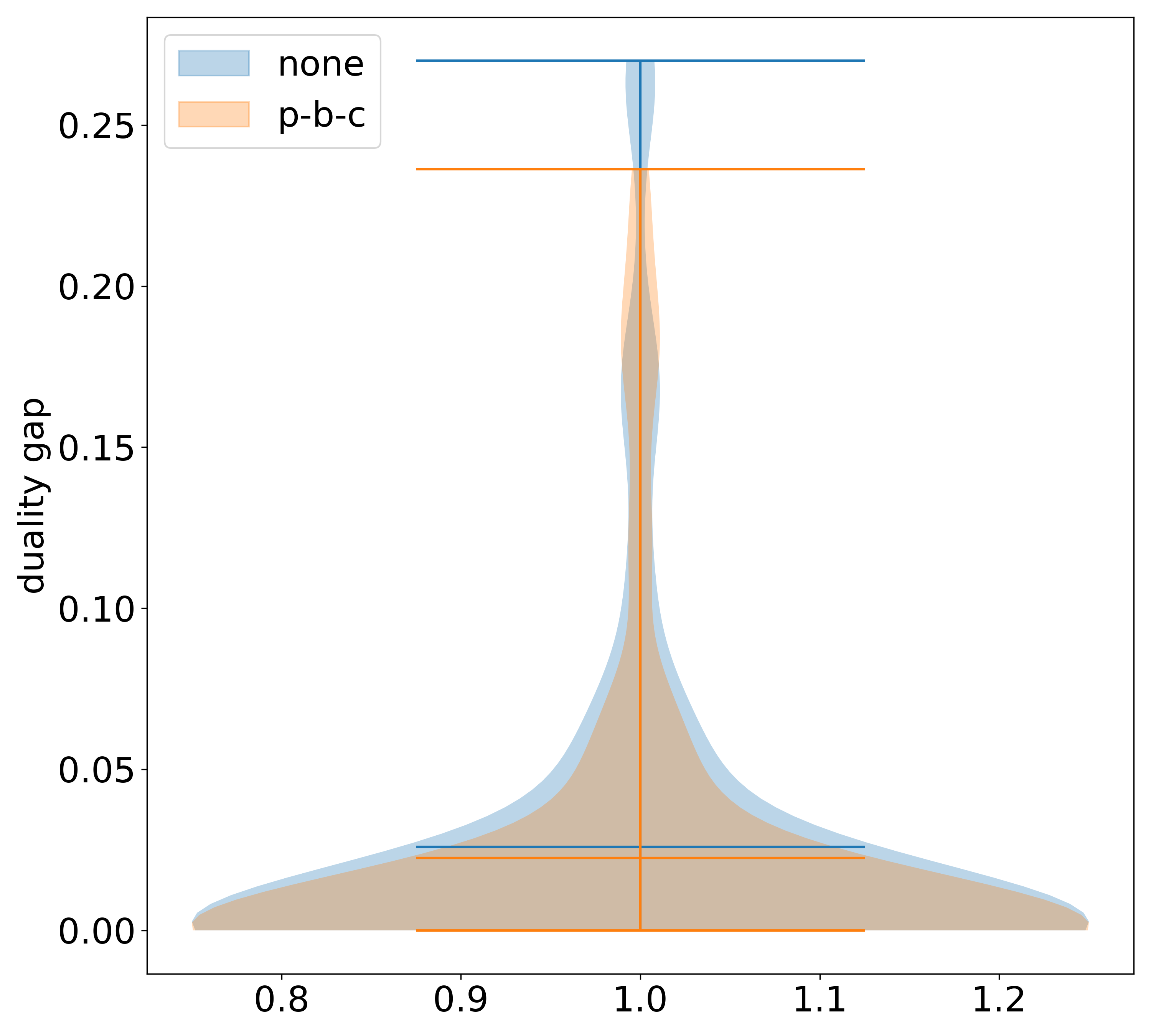} 
\caption{The violin plot shows the distribution of the remaining duality gap after the time limit of $1$ hour. The lines in the middle mark the mean duality gaps for all instances with $N=96$.}
\label{fig:gap and violin}
\end{figure}

\begin{figure}
\begin{subfigure}{0.32\textwidth}
\centering
\includegraphics[scale = 0.12]{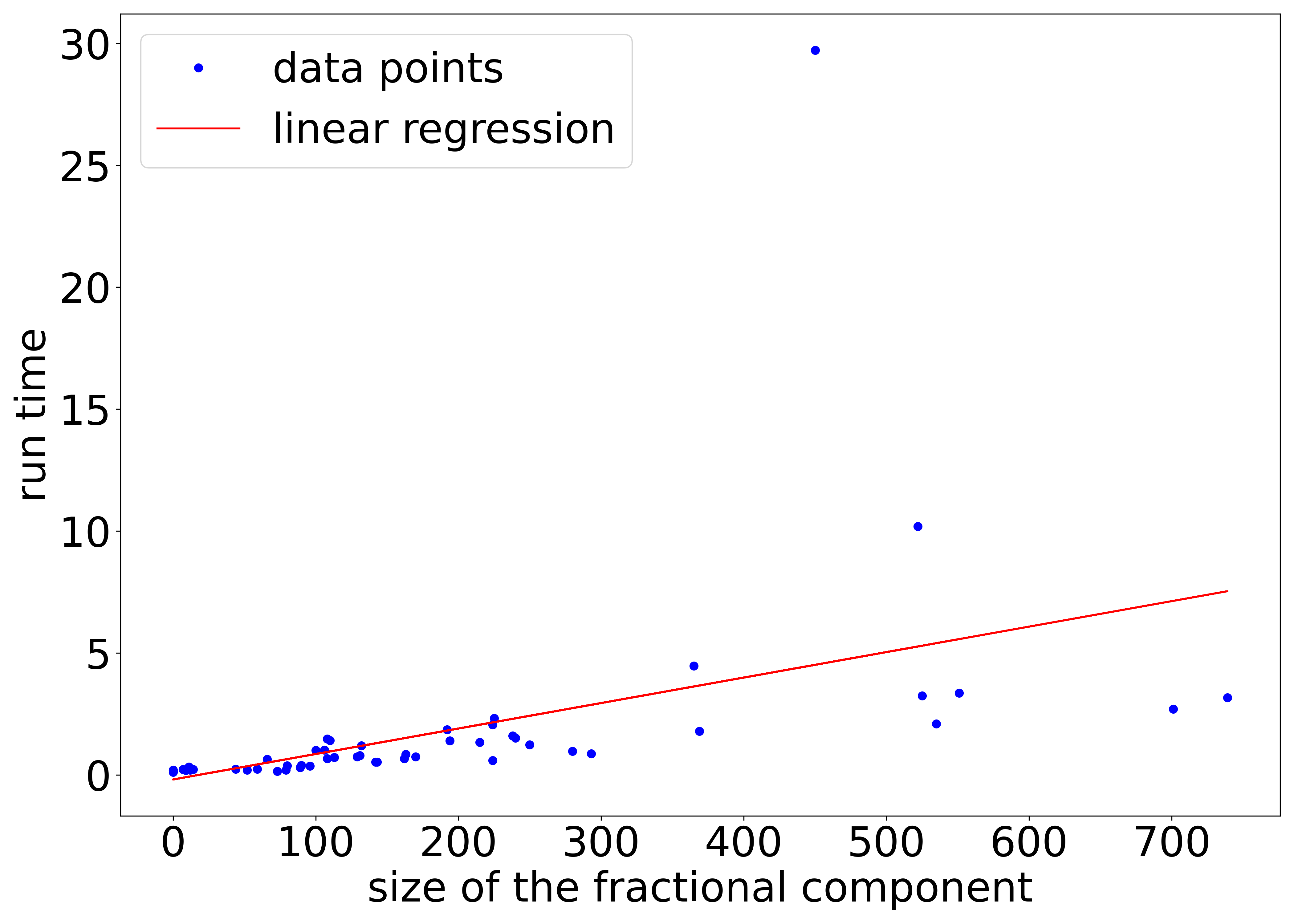}
\end{subfigure}
\hfill
\begin{subfigure}{0.32\textwidth}
\centering
\includegraphics[scale = 0.12]{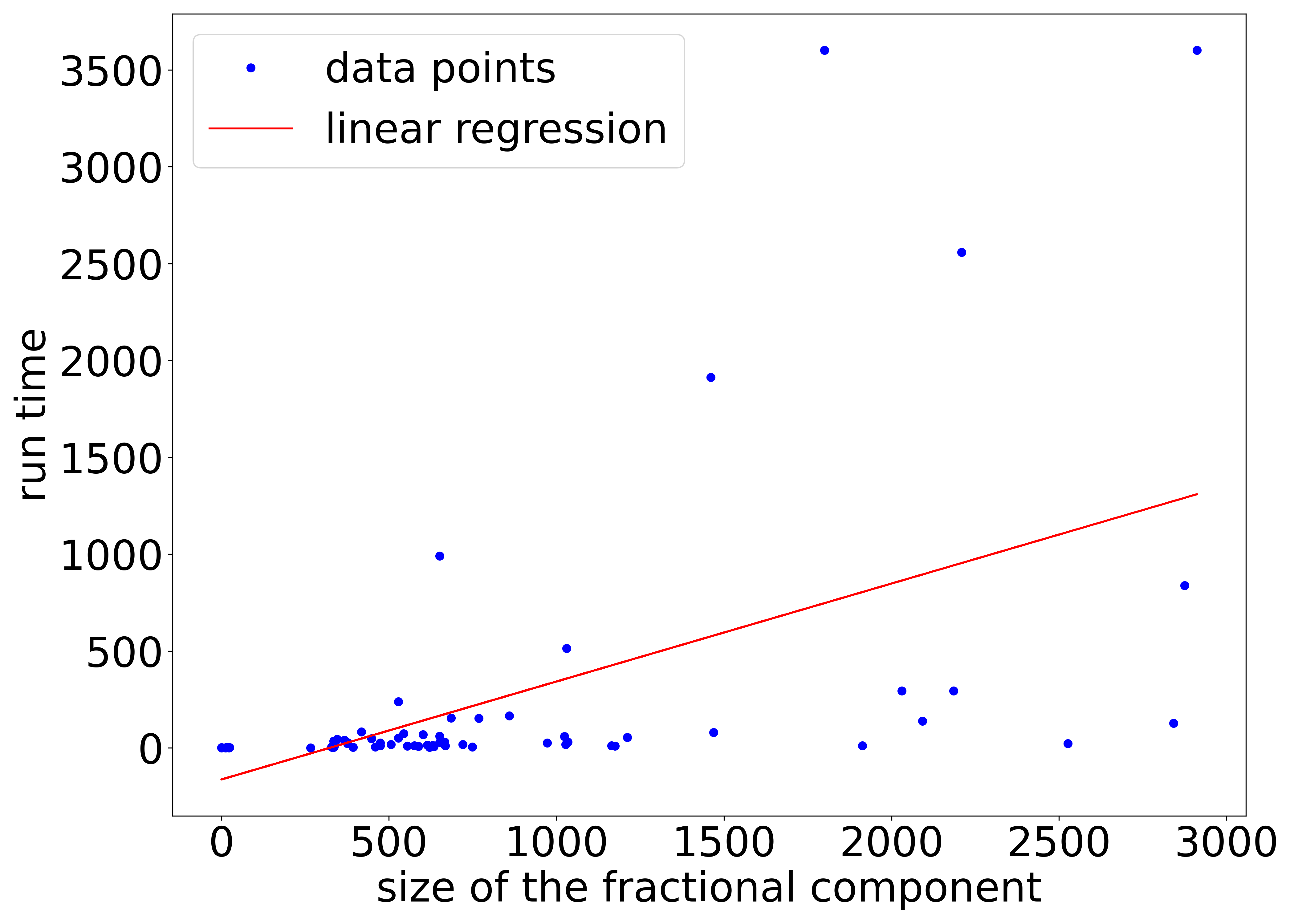}
\end{subfigure}
\hfill
\begin{subfigure}{0.32\textwidth}
\centering
\includegraphics[scale = 0.12]{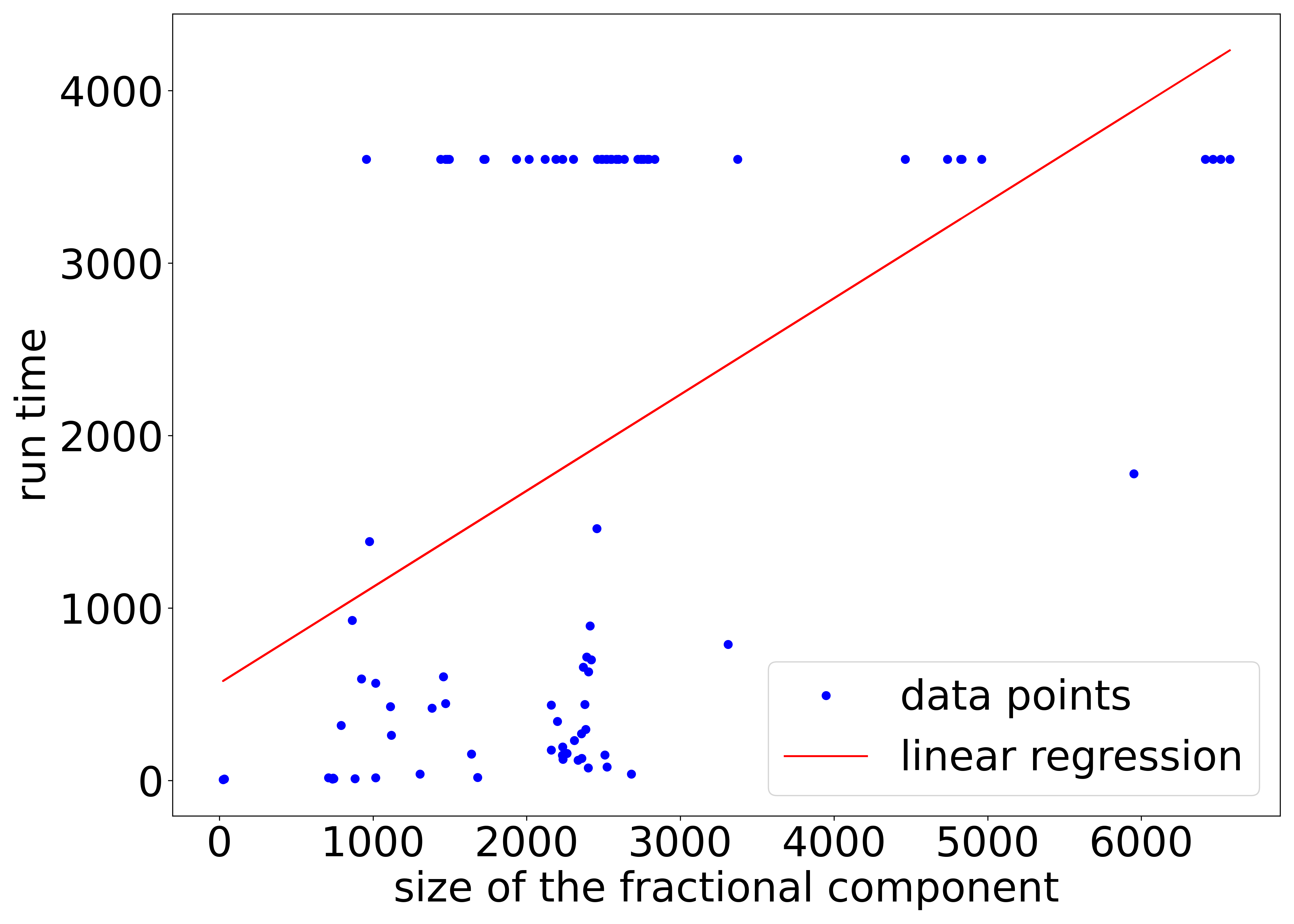}
\end{subfigure}
\caption{From left to right the plots show the relation between the size of the fractional component and the run time (none) for $N=32$ and $\Delta=64$, $N=64$ and $\Delta = 256$, $N=96$ and $\Delta=576$. In each plot the red line represents the linear regression of the data which itself is plotted as the blue points.}
\label{fig: size-time}
\end{figure}

\section{Conclusion}\label{sec: conclusion}
We have provided structural findings for the underlying polyhedron and its vertices as well as a conditional $p$-approximation.
Our findings have lead to tools to improve the run time of the integer programming solver. Our experimental results show that especially the proposed cutting planes reduce the run time substantially. Depending on the problem size the run time is reduced by up to $46$ \,\%. For larger values of $N$ this effect is reduced but improves if larger compute times are acceptable. We attribute this in parts to the fact that the inequalities describing the model only have a constant number of non-zero entries while the amount of non-zero entries in the added inequalities can grow with the size of the grid.

Our analysis and computational results motivate several avenues for future research.

The proposed cuts may be improved in two ways. First, for the cut using the bounding box
from Section \ref{sec:bounding_box_cut} we use the same coefficient for every node in the
fractional component in the added inequality. However, the construction of the cutting plane implies
that a minimum bisection which has to contain specific nodes on specific sides allows
for sharper coefficients and thus an improvement of the cut. Second, we have observed that the bounding
box can be significantly larger than the fractional component. In the worst case the size of the bounding box 
is quadratic in the size of the fractional component. Computing the actual minimum bisection of the 
fractional component instead of the bounding box would produce an inequality with fewer non-zero 
coefficients. 

In addition, cuts based on the fractional component itself also seem attractive because the component's size in the root linear program can indicate the hardness of the instance, see Figure \ref{fig: size-time}. 

While the Lagrangian relaxation provides a $p$-approximation, the feasible points were rarely useful 
as in many cases the used capacity is small and the approximation guarantee insufficient.

In order to leverage the achieved significant speed-up for medium-sized instances, we believe that
domain decomposition techniques on function space level are both attractive and viable so that one
obtains such instances of \eqref{eq:trip} in practice.

Moreover, as we have noted in Section \ref{sec: tr method}, the current discretization of the superordinate 
problem in function space currently implies an anisotropic discretization of the total variation.
Ongoing research shows that this may be overcome by successively adding linear inequalities to 
\eqref{eq:trip}.
Their effect on the problem structure and solution process is important for further research
and advancing the overall methodology but also significantly beyond the scope of this work.

We note that our results can still be applied when the problem contains additional fractional-valued control functions
if we impose the trust-region constraint $\max\{\|d_w\|_{L^1},\|d_u\|_{L^p}^p\} \le \Delta$, $p \ge 1$, where $d_w$ denotes
the step in the discrete-valued control and the $d_u$ denotes the step in the (newly added) fractional-valued control.
This is due to the fact that the trust-region subproblems completely decouple between these variables in this case if there
is no coupled regularization term. Coupling the discrete- and fractional-valued controls through the regularization term or
the trust-region may render the cutting planes invalid, however.

Regarding the dimension of the underlying domain (and the implied graph), we note that the suggested
branching rules and the primal heuristic can be transferred to higher dimensions straightforwardly but the implied
run time improvement is likely impaired because they rely on a serialization, which now contains less information
when the dimension increases. Moreover, the existence of the fractional component of the LP relaxation and the
cutting plane in Section \ref{subsec:fully connected cut} can still be shown with similar arguments
but the (computationally more efficient) bounding box-based cutting plane from Section \ref{sec:bounding_box_cut}
requires a multi-dimensional substitute for Lemma \ref{lemma: box formula}.

\section*{Acknowledgments}
The authors acknowledge funding by Deutsche Forschungsgemeinschaft (DFG) under grant no.\ MA 10080/2-1.

\appendix

\section{Proof of Lemma \ref{lemma: box formula}}\label{sec: proof box formula}
\begin{proof} {Proof}
    We start with the infinite grid $\mathbb{Z} \times \mathbb{Z}$ before returning to the rectangular subgraph. We want to determine a subset $U$ of the infinite grid with size $K=|U|$ such that the number of edges between $U$ and its complement in the infinite grid $U^C$ is minimized. 
    It is straightforward that $U$ is connected. If $U$ contains two separate connected components, we could shift one component until it becomes adjacent to the other component and reduce the amount of edges between $U$ and its complement in the infinite grid by at least one edge. Thus $U$ has to be a connected subset. 

    Let $v_\ell=(x_\ell,y_\ell), v_r=(x_r,y_r), v_b=(x_b,y_b), v_t=(x_t,y_t)$ be the leftmost, rightmost, bottommost, and topmost nodes in the set $U$. Then $U^C$ contains the node pairs $(x_\ell-1,y), (x_r+1,y)$ for $y \in [y_b,y_t]$. If we connect these pairs of the form $(x_\ell-1,y), (x_r+1,y)$ by a straight line we pass through at least one node in $U$ and thus obtain at least two cut edges per pair. The same holds true for the pairs $(x,y_b-1),(x,y_t+1)$ for $x \in [x_\ell,x_r]$. Thus there have to be at least $2(y_t-y_b)+2(x_r-x_\ell)$ cut edges between $U$ and $U^C$. For a set of size $K$ this implies that $2 \lceil \sqrt{K} \rceil + 2 \lfloor \sqrt{K} \rceil$ is a lower bound for the amount of edges between $U$ and $U^C$ (intuitively $U$ should as close to a square as possible).

    We can transfer these insights to the case of the positive orthant, the infinite grid $\mathbb{N} \times \mathbb{N}$. The argumentation from before also holds for the $\mathbb{N} \times \mathbb{N}$ grid but we now assume that for the leftmost node $v_\ell$ it holds that $v_\ell=(0,y_\ell)$ and for the bottommost node $v_b$ it holds that $v_b=(x_b,0)$ which implies that the minimum is halved meaning the minimum number of edges between $U$  and its complement in the $\mathbb{N} \times  \mathbb{N}$ grid is $y_t+x_r$ or $\lceil \sqrt{K} \rceil + \lfloor \sqrt{K} \rceil  $. 

    We now consider the case of the rectangular subgraph $G$. If $K< \min \{ \tilde n, \tilde m\} $ the situation coincides with the infinite grid $\mathbb{N} \times \mathbb{N}$. The concavity of the square root ensures that $U$ is connected because in the case of two components the minimum amount of edges would be twice the sum of the square roots of the number of nodes in the components. We obtain the same optimal structures and lower bounds for which examples are shown in the first two grids in Figure \ref{fig:optimal shapes}. If $K > |G|/2$ and $\tilde n \tilde m - K < \min \{ \tilde n, \tilde m \}$  we instead consider $U^C$ and obtain $ \lfloor \sqrt{\tilde n \tilde m - K} \rceil + \lceil \sqrt{\tilde n \tilde m - K} \rceil  $ edges. 
    
    It remains the case that $K \geq \min \{ \tilde n,\tilde m \}$ and  $\tilde n \tilde m -K \geq \min \{\tilde n,\tilde m \}$. Because edges to  $U^C$ may arise in only one of the four directions,
    we need to consider further optimal structures.
    If $K= c \tilde n$ for $c \in \mathbb{N}$, then we can choose $U$ as the first $c$ rows to obtain a set $U$ of size $K$ with exactly $\tilde n$ edges to its complement in $G$, see the third grid in Figure \ref{fig:optimal shapes}. The same argumentation holds if $K = c\tilde m$ for $c \in \mathbb{N}$. It is evident that this new structure is optimal iff $\tilde n <  \lceil \sqrt{K} \rceil +  \lfloor \sqrt{K} \rceil$ and $\tilde n < \lfloor \sqrt{\tilde n \tilde m - K} \rceil + \lceil \sqrt{\tilde n \tilde m - K} \rceil  $. If $(c+1) \tilde n >K>c \tilde n$ then either one of the structures derived from the infinite grid $\mathbb{N} \times \mathbb{N}$ is optimal or  we can choose $U$ as the first $c$ rows and the part of the next row such that the size matches which implies $\tilde n +1$ cut edges, see the fourth grid in Figure \ref{fig:optimal shapes}. Thus $\tilde n$ is also a lower bound in this case.
\end{proof}

\begin{figure}
    \centering
    
\resizebox{0.9 \textwidth}{!}{%
\begin{tikzpicture}[node distance={65mm}, thick, main/.style = {draw, circle}] 
 \foreach \x in {0,...,4} {
 \foreach \y in {0,...,4} {
    \node[main, minimum size = 0.5 cm] at(\x ,\y ) {};
}
}

\foreach \x in {0,...,4} {
\foreach \y in {0,...,3} {
\draw (\x, \y+0.25) -- (\x,\y+0.75);
}
}
\foreach \x in {0,...,3} {
\foreach \y in {0,...,4} {
\draw (\x+0.25, \y) -- (\x+0.75,\y);
}
}
\draw[line width=0.5mm,red](-0.25, 0.5) -- (1.5,0.5);
\draw[line width=0.5mm,red](1.5,0.5) -- (1.5,-0.25);
	\end{tikzpicture}
 \hspace{1cm}
\begin{tikzpicture}[node distance={65mm}, thick, main/.style = {draw, circle}] 
 \foreach \x in {0,...,4} {
 \foreach \y in {0,...,4} {
    \node[main, minimum size = 0.5 cm] at(\x ,\y ) {};
}
}

\foreach \x in {0,...,4} {
\foreach \y in {0,...,3} {
\draw (\x, \y+0.25) -- (\x,\y+0.75);
}
}
\foreach \x in {0,...,3} {
\foreach \y in {0,...,4} {
\draw (\x+0.25, \y) -- (\x+0.75,\y);
}
}
\draw[line width=0.5mm,red](-0.25, 1.5) -- (1.5,1.5);
\draw[line width=0.5mm,red](1.5,1.5) -- (1.5,-0.25);
	\end{tikzpicture}
  \hspace{1cm}
 \begin{tikzpicture}[node distance={65mm}, thick, main/.style = {draw, circle}] 
 \foreach \x in {0,...,4} {
 \foreach \y in {0,...,4} {
    \node[main, minimum size = 0.5 cm] at(\x ,\y ) {};
}
}

\foreach \x in {0,...,4} {
\foreach \y in {0,...,3} {
\draw (\x, \y+0.25) -- (\x,\y+0.75);
}
}
\foreach \x in {0,...,3} {
\foreach \y in {0,...,4} {
\draw (\x+0.25, \y) -- (\x+0.75,\y);
}
}
\draw[line width=0.5mm,red](-0.25, 1.5) -- (4.25,1.5);
	\end{tikzpicture}
  \hspace{1cm}
  \begin{tikzpicture}[node distance={65mm}, thick, main/.style = {draw, circle}] 
 \foreach \x in {0,...,4} {
 \foreach \y in {0,...,4} {
    \node[main, minimum size = 0.5 cm] at(\x ,\y ) {};
}
}

\foreach \x in {0,...,4} {
\foreach \y in {0,...,3} {
\draw (\x, \y+0.25) -- (\x,\y+0.75);
}
}
\foreach \x in {0,...,3} {
\foreach \y in {0,...,4} {
\draw (\x+0.25, \y) -- (\x+0.75,\y);
}
}
\draw[line width=0.5mm,red](-0.25, 1.5) -- (2.5,1.5);
\draw[line width=0.5mm,red](2.5, 1.5) -- (2.5,2.5);
\draw[line width=0.5mm,red](2.5, 2.5) -- (4.5,2.5);
	\end{tikzpicture}
}

\caption{Visualization of the optimal cut for different sizes of the set $U$. From left to right: $K=2$, $K=4$, $K=10$ and $K=12$. For the first two values of $K$ a shape as close to a square is desirable while for $K=10$ as an integer multiple of the row size of $5$ choosing the first two bottom rows as the set $U$ is optimal. For $K=12$ we set the first two rows to $1$ as well as $2$ nodes in the third row.}
\label{fig:optimal shapes}
\end{figure}
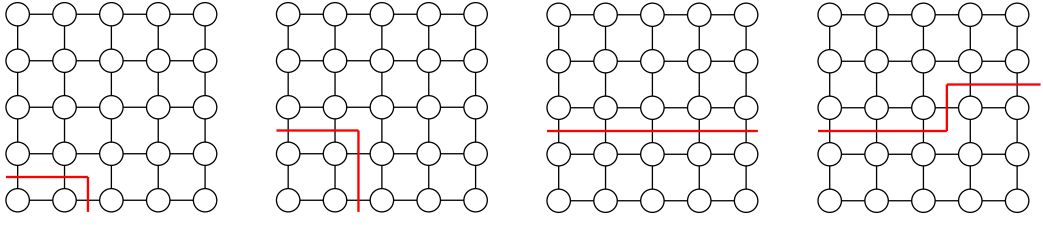

\section{Dual decomposition relaxation}
\label{subsec: dual decomp relax}
We introduce another relaxation which did not prove useful in our preliminary experiments but which might prove useful with slight variations in the future.
In \eqref{eq:ip} the terms modeling the total variation are split into sums over the auxiliary variables $\beta \in \mathbb{N}^{(N-1)\times M}$ and $\gamma \in \mathbb{N}^{N\times (M-1)}$. The corresponding linear inequalities are coupled through $d$. To obtain a new relaxation we rewrite \eqref{eq:ip} such that we split each entry in $d$ into two copies $d^c$ and $d^r$ where $d^c$ is used for the inequalities containing $\beta$ and $d^r$ for the inequalities containing $\gamma$. This will allow us to solve the resulting problems in polynomial time. Furthermore, this will provide a potentially tighter lower bound than the the one obtained from \eqref{eq:lp}.
For the underlying grid illustrated in Figure \ref{fig:grid} this means that $d^r$ considers only the rows while $d^c$ considers only the columns. To ensure equivalence to \eqref{eq:ip} we add coupling equalities to enforce that both copies are equal. We obtain:
\begin{gather}\label{eq:ip_dd}
\begin{aligned}
    \min_{d^r,d^c,\delta^r, \delta^c,\beta,\gamma}\quad &  \frac{1}{2} \sum_{i=1}^N \sum_{j=1}^M c_{i,j} d_{i,j}^r+ \frac{1}{2} \sum_{i=1}^N \sum_{j=1}^M c_{i,j} d_{i,j}^c + \alpha  \left (\sum_{j=1}^{M} \sum_{i=1}^{N-1} \beta_{i,j} +  \sum_{i=1}^{N} \sum_{j=1}^{M-1} \gamma_{i,j} \right )\\
    \text{s.t.}\quad
    &  (d^r,\delta^r,\gamma) \in P_r,\ (d^c,\delta^c,\beta) \in P_c,\ d^r,\ d^c \in \mathbb{Z}^{N \times M},
    \text{ and }d^c = d^r,
\end{aligned}
\tag{IP-DD}
\end{gather}
where $P_r$ is the polyhedron defined by 
\begin{align*}
     (d^r, \delta^r,\gamma) \in P_r  &\logeq  \begin{cases}
         \min \Xi \leq x_{i,j} + d^r_{i,j} \leq \max \Xi &\text{ for all } i \in [N], j \in [M],\\
     -\gamma_{i,j} \leq x_{i,j+1} + d^r_{i,j+1} - x_{i,j} - d^r_{i,j} \leq \gamma_{i,j} &\text{ for all } i \in [N], j \in [M-1],\\
    -\delta^r_{i,j} \leq d^r_{i,j} \leq \delta^r_{i,j} &\text{ for all } i \in [N], j \in [M],\\
     \sum_{i=1}^N \sum_{j=1}^M  \delta^r_{i,j} \le \Delta,
    \end{cases} 
\end{align*}
and  $P_c$ is the polyhedron defined by 
\begin{align*}
    (d^c, \delta^c,\beta) \in P_c  &\logeq  \begin{cases}
         \min \Xi \leq x_{i,j} + d^c_{i,j} \leq \max \Xi &\text{ for all } i \in [N], j \in [M],\\
     -\beta_{i,j} \leq x_{i+1,j} + d^c_{i+1,j} - x_{i,j} - d^c_{i,j} \leq \beta_{i,j} &\text{ for all } i \in [N-1], j \in [M],\\
     -\delta^c_{i,j} \leq d^c_{i,j} \leq \delta^c_{i,j} &\text{ for all } i \in [N], j \in [M],\\
     \sum_{i=1}^N \sum_{j=1}^M  \delta^c_{i,j} \le \Delta.
    \end{cases} 
\end{align*}
We briefly state that the constructed problem \eqref{eq:ip_dd} is equivalent to \eqref{eq:ip}.
\begin{lemma}\label{lem:ipdd_is_ip}
    Let $(d,\delta,\beta,\gamma)$ be feasible for \eqref{eq:ip}. Then $(d^r,d^c,\delta^r,\delta^c,\beta,\gamma)$ with $d^r=d^c=d$, $\delta^r=\delta^c=\delta$ is feasible for \eqref{eq:ip_dd} with the same objective value.
    If $(d^r,d^c,\delta^r,\delta^c,\beta,\gamma)$ is feasible for \eqref{eq:ip_dd} then $(d,\delta,\beta,\gamma)$ with $d=d^r$ and $\delta = \delta^r$ is feasible for \eqref{eq:ip} with the same objective value.
\end{lemma}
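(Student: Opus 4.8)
The plan is to verify both implications directly from the defining inequalities, using that, once $d^r$ and $d^c$ are identified, the constraint systems of $P_r$ and $P_c$ together amount to the constraint system of $P_\Delta$ with a duplicated copy of the box constraints and of the capacity constraint.

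First I would treat the forward direction. Given $(d,\delta,\beta,\gamma)$ feasible for \eqref{eq:ip}, set $d^r = d^c = d$ and $\delta^r = \delta^c = \delta$. The coupling equality $d^c = d^r$ then holds by construction, and $d^r, d^c \in \mathbb{Z}^{N\times M}$ is inherited from $d$. It remains to check $(d^r,\delta^r,\gamma) \in P_r$ and $(d^c,\delta^c,\beta) \in P_c$: the box inequalities $\min \Xi \le x_{i,j} + d_{i,j} \le \max \Xi$ are part of the system defining $P$; the inequalities $-\gamma_{i,j} \le x_{i,j+1} + d_{i,j+1} - x_{i,j} - d_{i,j} \le \gamma_{i,j}$ and $-\beta_{i,j} \le x_{i+1,j} + d_{i+1,j} - x_{i,j} - d_{i,j} \le \beta_{i,j}$ are likewise among the constraints of $P$; the bounds $-\delta_{i,j} \le d_{i,j} \le \delta_{i,j}$ are in $P$; and $\sum_{i,j} \delta_{i,j} \le \Delta$ holds because $(d,\delta,\beta,\gamma) \in P_\Delta$. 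Finally the objective values coincide since $\tfrac12 \sum_{i,j} c_{i,j} d^r_{i,j} + \tfrac12 \sum_{i,j} c_{i,j} d^c_{i,j} = \sum_{i,j} c_{i,j} d_{i,j}$ and the $\beta,\gamma$ terms are untouched.

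For the reverse direction, let $(d^r,d^c,\delta^r,\delta^c,\beta,\gamma)$ be feasible for \eqref{eq:ip_dd}. The coupling equality yields $d^r = d^c =: d$, which is integral, and I would set $\delta \coloneqq \delta^r$. Then the box constraints and the $\gamma$-inequalities come from $(d^r,\delta^r,\gamma) \in P_r$; the $\beta$-inequalities come from $(d^c,\delta^c,\beta) \in P_c$ combined with $d^c = d$; and $-\delta_{i,j} \le d_{i,j} \le \delta_{i,j}$ together with $\sum_{i,j}\delta_{i,j} \le \Delta$ follow from the corresponding parts of $P_r$. Hence $(d,\delta,\beta,\gamma) \in P_\Delta$ with $d \in \mathbb{Z}^{N\times M}$, i.e. the tuple is feasible for \eqref{eq:ip}, and the objective values agree by the same identity as before.

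There is no genuine obstacle; the only point worth a remark is the asymmetry that \eqref{eq:ip_dd} carries two uncoupled variables $\delta^r,\delta^c$. This causes no difficulty: either copy recovers a valid $\delta$, and one may note additionally that in an optimal solution both equal $|d|$ entrywise in analogy with Remark \ref{remark:sol construction}, so the choice is immaterial.
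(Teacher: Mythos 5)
Your proof is correct and follows essentially the same route as the paper: a direct constraint-by-constraint verification in both directions, identifying $d^r=d^c=d$ and $\delta^r=\delta^c=\delta$ (resp.\ $d=d^r$, $\delta=\delta^r$) and observing that the two halves of the linear cost term recombine. Your write-up is simply a more explicit version of the paper's argument, including the harmless remark about the uncoupled $\delta^r,\delta^c$.
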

\begin{proof} {Proof}
    Let $(d,\delta,\beta,\gamma)$ be feasible for \eqref{eq:ip}. We show that $(d^r,d^c,\delta^r,\delta^c,\beta,\gamma)$ with $d^r=d^c=d$ and $\delta^r=\delta^c=\delta$ is feasible for \eqref{eq:ip_dd}. We first note that $ (d^r, \delta^r,\gamma) \in P_r$, which follows from $d=d^r$, $\delta = \delta^r$, and $(d,\delta,\beta,\gamma) \in P_\Delta$.
 By the same argumentation $ (d^c, \delta^c,\beta) \in P_c$. The remaining constraints are obviously fulfilled. The objective values coincide because $d^r=d^c=d$ so that
\begin{align*}
    \frac{1}{2} \sum_{i=1}^N \sum_{j=1}^M c_{i,j} d_{i,j}^r+ \frac{1}{2} \sum_{i=1}^N \sum_{j=1}^M c_{i,j} d_{i,j}^c 
     =  \sum_{i=1}^N \sum_{j=1}^M c_{i,j} d_{i,j}
\end{align*}  
and the remaining terms are the same.
The other implication can be proven in the same way by just switching the roles of $d^r$ and $d$ as well as of $\delta^r$ and $\delta$ and using $d^r=d^c$.
\end{proof}

\begin{corollary}
\label{cor: lpeq}
    Let $(d,\delta,\beta,\gamma)$ be a feasible point of \eqref{eq:lp}. Then $(d^r,d^c,\delta^r,\delta^c,\beta,\gamma)$ with $d^r=d^c=d$ and $\delta^r=\delta^c=\delta$ is feasible for the linear programming relaxation of \eqref{eq:ip_dd} with the same objective value.
    On the other hand if $(d^r,d^c,\delta^r,\delta^c,\beta,\gamma)$ is feasible for the linear programming relaxation of \eqref{eq:ip_dd} then $(d,\delta,\beta,\gamma)$ with $d=d^r$ and $\delta = \delta^r$ is feasible for \eqref{eq:lp} with the same objective value. 
\end{corollary}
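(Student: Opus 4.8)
The plan is to mirror the proof of Lemma~\ref{lem:ipdd_is_ip} essentially verbatim, observing that integrality of $d$ was never used there: only membership in the polyhedra $P_\Delta$, $P_r$, $P_c$, the coupling equality, and an arithmetic identity on the linear part of the objective entered that argument. So the cleanest write-up is to re-run that proof with every integrality requirement deleted, and I sketch the two directions below.

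For the first implication, I would start from $(d,\delta,\beta,\gamma)\in P_\Delta$ and set $d^r=d^c=d$, $\delta^r=\delta^c=\delta$. I then check $(d^r,\delta^r,\gamma)\in P_r$: the defining inequalities of $P_r$ — the box constraints $\min\Xi\le x_{i,j}+d^r_{i,j}\le\max\Xi$, the $\gamma$-inequalities, the bounds $-\delta^r_{i,j}\le d^r_{i,j}\le\delta^r_{i,j}$, and the capacity bound $\sum_{i,j}\delta^r_{i,j}\le\Delta$ — are exactly a subset of the inequalities already satisfied by $(d,\delta,\gamma)$ since $(d,\delta,\beta,\gamma)\in P_\Delta$. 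The analogous substitution gives $(d^c,\delta^c,\beta)\in P_c$, and the coupling constraint $d^c=d^r$ holds by construction. Finally $\tfrac12\sum_{i,j}c_{i,j}d^r_{i,j}+\tfrac12\sum_{i,j}c_{i,j}d^c_{i,j}=\sum_{i,j}c_{i,j}d_{i,j}$ and the $\beta,\gamma$-terms are untouched, so the objective values agree.

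For the converse, given a point feasible for the linear programming relaxation of \eqref{eq:ip_dd}, the coupling equality $d^c=d^r$ lets me define $d\coloneqq d^r=d^c$ and $\delta\coloneqq\delta^r$. Membership $(d,\delta,\beta,\gamma)\in P_\Delta$ then follows by collecting the box and $\gamma$-constraints together with $-\delta^r\le d^r\le\delta^r$ and $\sum_{i,j}\delta^r_{i,j}\le\Delta$ from $(d^r,\delta^r,\gamma)\in P_r$, and the $\beta$-constraints from $(d^c,\delta^c,\beta)\in P_c$ after substituting $d^c=d$. The objective identity is the same computation as before, now read in reverse.

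I do not expect a genuine obstacle here: the statement is the straightforward linear-programming relaxation of Lemma~\ref{lem:ipdd_is_ip}. The only points requiring a little care are (i) to invoke the coupling equality $d^c=d^r$ when identifying the single variable $d$ in the converse direction, and (ii) to note that taking $\delta=\delta^r$ rather than $\delta^c$ is harmless, because $P_\Delta$ carries only one copy of the $\delta$-variables and $\delta^c$ is not needed to certify feasibility of $(d,\delta,\beta,\gamma)$ in \eqref{eq:lp}. Consequently one may, if preferred, simply remark that the corollary is obtained from the proof of Lemma~\ref{lem:ipdd_is_ip} by dropping all integrality constraints, since that proof is oblivious to them.
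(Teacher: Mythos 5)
Your proposal is correct and matches the paper's argument exactly: the paper's proof of this corollary is the one-line observation that one argues as in Lemma~\ref{lem:ipdd_is_ip} after dropping the integrality conditions, which is precisely your strategy. Your more detailed verification of the two directions is a faithful expansion of that same idea.
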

\begin{proof} {Proof}
    We argue as in Lemma \ref{lem:ipdd_is_ip}. We just need to drop the integrality conditions from both problems.
\end{proof}

\begin{remark}
    The previous lemma and corollary also hold for $d=d^c$ and $\delta = \delta^c$ by symmetry of the problem.
\end{remark}
We can now obtain a Lagrangian relaxation by moving the coupling constraint $d^r=d^c$ into the objective with a multiplier variable. The problem reads
\begin{gather}\label{eq:lr_dd}
\begin{aligned}
    \min_{d^r,d^c,\delta^r, \delta^c,\beta,\gamma}\quad &  \frac{1}{2} \sum_{i=1}^N \sum_{j=1}^M c_{i,j} d_{i,j}^r+ \frac{1}{2} \sum_{i=1}^N \sum_{j=1}^M c_{i,j} d_{i,j}^c  + \alpha  \left (\sum_{j=1}^{M} \sum_{i=1}^{N-1} \beta_{i,j} +  \sum_{i=1}^{N} \sum_{j=1}^{M-1} \gamma_{i,j} \right )\\& + \sum_{i=1}^N \sum_{j=1}^M \lambda_{i,j} (d^r_{i,j}-d^c_{i,j})\\
    \text{s.t.}\quad
    &  (d^r,\delta^r,\gamma) \in P_r,\  (d^c,\delta^c,\beta) \in P_c,\  d^r,\ d^c \in \mathbb{Z}^{N \times M}
\end{aligned}
\tag{LR-DD}
\end{gather}
and provides a lower bound on the objective of \eqref{eq:ip_dd} for every $\lambda \in \mathbb{R}^{N \times M}$. 

\begin{theorem}
    Let $\lambda \in \mathbb{R}^{N \times M}$ be fixed. Then the optimal objective value of \eqref{eq:lr_dd} provides a lower bound for the optimal objective value of \eqref{eq:ip_dd}. 
\end{theorem}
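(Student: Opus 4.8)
The plan is to run the standard (weak) Lagrangian duality argument: exhibit every feasible point of \eqref{eq:ip_dd} as a feasible point of \eqref{eq:lr_dd} on which the two objective functions coincide, and then conclude by minimizing. Concretely, the only difference between the two problems is that \eqref{eq:lr_dd} drops the coupling equality $d^c = d^r$ and instead adds the linear penalty $\sum_{i=1}^N\sum_{j=1}^M \lambda_{i,j}(d^r_{i,j}-d^c_{i,j})$ to the objective, while the remaining constraint sets $(d^r,\delta^r,\gamma)\in P_r$, $(d^c,\delta^c,\beta)\in P_c$ and the integrality requirements $d^r,d^c\in\mathbb{Z}^{N\times M}$ are identical. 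Hence the feasible set of \eqref{eq:ip_dd} is contained in the feasible set of \eqref{eq:lr_dd}.

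The steps, in order, would be as follows. First, I would note that \eqref{eq:ip_dd} is feasible, e.g.\ via Lemma \ref{lem:ipdd_is_ip} applied to the feasible point $d=0$ of \eqref{eq:ip}, so its optimal value is a well-defined real number; likewise I would observe that on $P_r$ and $P_c$ the variables $d^r,d^c$ are confined to the box $\min\Xi \le x_{i,j}+d^{\bullet}_{i,j}\le\max\Xi$ and that $\delta^r,\delta^c,\beta,\gamma$ are bounded by the capacity and the absolute-value inequalities, so the objective of \eqref{eq:lr_dd} is bounded below and its minimum is attained (and finite). Second, I would take an arbitrary feasible point $(d^r,d^c,\delta^r,\delta^c,\beta,\gamma)$ of \eqref{eq:ip_dd}; since it satisfies $d^r=d^c$ together with $(d^r,\delta^r,\gamma)\in P_r$, $(d^c,\delta^c,\beta)\in P_c$ and the integrality conditions, it is feasible for \eqref{eq:lr_dd}. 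Third, on this point the penalty term evaluates to $\sum_{i=1}^N\sum_{j=1}^M \lambda_{i,j}(d^r_{i,j}-d^c_{i,j}) = 0$, so the objective value of \eqref{eq:lr_dd} at this point equals the objective value of \eqref{eq:ip_dd} at the same point. Fourth, the optimal value of \eqref{eq:lr_dd}, being a minimum over a set that contains this point, is at most that common objective value; taking the infimum over all feasible points of \eqref{eq:ip_dd} yields that the optimal value of \eqref{eq:lr_dd} is at most the optimal value of \eqref{eq:ip_dd}, which is the claim.

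I do not expect a genuine obstacle here: the statement is exactly weak Lagrangian duality and the argument is essentially bookkeeping. The only points that need a word of care are the two sanity checks mentioned above — that \eqref{eq:ip_dd} is feasible so that "lower bound" refers to an actual number, and that the inner minimization in \eqref{eq:lr_dd} is bounded below (guaranteed by the box and capacity constraints in $P_r$ and $P_c$) so that its optimal value is finite rather than $-\infty$. Both are immediate from the structure already established in Section \ref{sec: discrete problem} and Lemma \ref{lem:ipdd_is_ip}.
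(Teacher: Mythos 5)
Your proposal is correct and follows essentially the same route as the paper: take an arbitrary feasible point of \eqref{eq:ip_dd}, observe it is feasible for \eqref{eq:lr_dd} with the penalty term vanishing since $d^r=d^c$, and conclude that the minimum of \eqref{eq:lr_dd} cannot exceed that of \eqref{eq:ip_dd}. The extra sanity checks on feasibility and boundedness are fine but not needed beyond what the paper already establishes.
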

\begin{proof} {Proof}
    Let $(d^r,d^c,\delta^r,\delta^c,\beta,\gamma)$ with $d^r=d^c$ be an arbitrary, feasible point of \eqref{eq:ip_dd}. Then this point is also feasible for \eqref{eq:lr_dd}. Because $d^r=d^c$ the equality $\sum_{i=1}^N \sum_{j=1}^M \lambda_{i,j} (d^r_{i,j}-d^c_{i,j})=0$ holds and the objective function values of \eqref{eq:ip_dd} and \eqref{eq:lr_dd} coincide. Thus the optimal objective value of \eqref{eq:lr_dd} can not be higher than the optimal objective value of \eqref{eq:ip_dd}.
\end{proof}

The problem \eqref{eq:lr_dd} can be decoupled into the linear programs
\begin{gather}\label{r_dd}
\begin{aligned}
    \min_{d^r,\delta^r,\gamma}\quad &  \frac{1}{2} \sum_{i=1}^N \sum_{j=1}^M (c_{i,j}+ 2\lambda_{i,j})  d_{i,j}^r + \alpha  \sum_{i=1}^{N} \sum_{j=1}^{M-1} \gamma_{i,j}  \\
    \text{s.t.}\quad
    &  (d^r,\delta^r,\gamma) \in P_r \text{ and } d^r \in \mathbb{Z}^{N \times M},
\end{aligned}
\tag{R-DD}
\end{gather}
and 
\begin{gather}\label{c_dd}
\begin{aligned}
    \min_{d^c,\delta^c,\beta}\quad &  \frac{1}{2} \sum_{i=1}^N \sum_{j=1}^M (c_{i,j}-2\lambda_{i,j}) d_{i,j}^c + \alpha  \sum_{i=1}^{N-1} \sum_{j=1}^{M} \beta_{i,j}  \\
    \text{s.t.}\quad
    &  (d^c,\delta^c,\beta) \in P_c \text{ and }  d^c \in \mathbb{Z}^{N \times M},
\end{aligned}
\tag{C-DD}
\end{gather}
which can be solved independently for $d^r,\delta^r, \gamma$ and $d^c,\delta^c,\beta$ in order to solve \eqref{eq:lr_dd}.
Each of the resulting problems is a one-dimensional version of \eqref{eq:trip}. We have already shown that the sum of the optimal values provides a lower bound. We are now interested in cases, where this allows to construct an optimal point for \eqref{eq:ip_dd} and hence \eqref{eq:ip}.

\begin{theorem}
    Let $(d^{r,*}, \delta^{r,*}, \gamma^*)$ be optimal for the problem \eqref{r_dd} and $(d^{c,*}, \delta^{c,*}, \beta^*)$ be optimal for \eqref{c_dd}. If $ d^{r,*} = d^{c,*}$ holds, then $(d^*,\delta^*,\beta^*,\gamma^*)$ with $d^*= d^{r,*}$ and $\delta^* = \delta^{r,*}$ is optimal for \eqref{eq:ip_dd} and hence \eqref{eq:ip}.
\end{theorem}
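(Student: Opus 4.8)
The plan is to exhibit the candidate tuple as a feasible point of \eqref{eq:ip_dd} whose objective value equals the optimal value of the Lagrangian relaxation \eqref{eq:lr_dd}, and then to combine the weak-duality bound proved above with Lemma \ref{lem:ipdd_is_ip}. This is the standard pattern: a primal-feasible point that attains a Lagrangian lower bound is optimal.

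First I would record the additive separation of \eqref{eq:lr_dd} for the fixed multiplier $\lambda$. Collecting the terms in $d^r$ and in $d^c$ rewrites its objective as $\bigl[\tfrac12 \sum_{i,j}(c_{i,j}+2\lambda_{i,j}) d^r_{i,j} + \alpha \sum_{i,j}\gamma_{i,j}\bigr] + \bigl[\tfrac12 \sum_{i,j}(c_{i,j}-2\lambda_{i,j}) d^c_{i,j} + \alpha \sum_{i,j}\beta_{i,j}\bigr]$, where the two bracketed expressions are constrained independently by $(d^r,\delta^r,\gamma)\in P_r$ and $(d^c,\delta^c,\beta)\in P_c$ together with integrality. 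Hence the optimal value of \eqref{eq:lr_dd} equals the sum of the optimal values of \eqref{r_dd} and \eqref{c_dd}, and this sum is attained precisely by the pair $(d^{r,*},\delta^{r,*},\gamma^*)$ and $(d^{c,*},\delta^{c,*},\beta^*)$.

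Next I would assemble $(d^{r,*}, d^{c,*}, \delta^{r,*}, \delta^{c,*}, \beta^*, \gamma^*)$. It lies in $P_r \times P_c$ and is integral because the subproblem solutions are, and since $d^{r,*}=d^{c,*}$ by hypothesis it also satisfies the coupling equality $d^r=d^c$, so it is feasible for \eqref{eq:ip_dd}. Writing $d^* \coloneqq d^{r,*} = d^{c,*}$, the penalty term $\sum_{i,j}\lambda_{i,j}(d^{r,*}_{i,j}-d^{c,*}_{i,j})$ vanishes and $\tfrac12\sum_{i,j} c_{i,j}d^{r,*}_{i,j} + \tfrac12\sum_{i,j} c_{i,j}d^{c,*}_{i,j} = \sum_{i,j} c_{i,j}d^*_{i,j}$, so the objective of \eqref{eq:ip_dd} at this point equals the objective of \eqref{eq:lr_dd} at the same point, which by the previous step is the optimal value of \eqref{eq:lr_dd}. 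The weak-duality theorem proved above says this value is a lower bound for the optimal value of \eqref{eq:ip_dd}; since our feasible point attains it, it is optimal for \eqref{eq:ip_dd}. Finally, Lemma \ref{lem:ipdd_is_ip} yields from it the feasible point $(d^*,\delta^*,\beta^*,\gamma^*)$ of \eqref{eq:ip} with the same objective value, and the same lemma applied in both directions shows the optimal values of \eqref{eq:ip} and \eqref{eq:ip_dd} coincide, so $(d^*,\delta^*,\beta^*,\gamma^*)$ is optimal for \eqref{eq:ip}.

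There is no serious obstacle here. The only points that demand care are the bookkeeping in the separation of \eqref{eq:lr_dd} — in particular the factor $\tfrac12$ on the cost and the $\pm 2\lambda$ shifts appearing in \eqref{r_dd} and \eqref{c_dd} — and verifying that the assembled tuple meets every constraint of \eqref{eq:ip_dd}, including the capacity constraints built into $P_r$ and $P_c$; both follow at once from feasibility of the two subproblem optima.
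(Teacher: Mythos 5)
Your proof is correct and is essentially the paper's argument: both rest on the fact that when $d^{r,*}=d^{c,*}$ the $\lambda$-terms cancel, so the assembled point attains the decomposed (Lagrangian) lower bound and is therefore optimal, with Lemma \ref{lem:ipdd_is_ip} transferring optimality to \eqref{eq:ip}. The only cosmetic difference is that you route the bound through the previously proved weak-duality theorem for \eqref{eq:lr_dd}, whereas the paper obtains it directly by adding the two subproblem optimality inequalities evaluated at an arbitrary feasible point of \eqref{eq:ip}.
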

\begin{proof} {Proof}
    The feasibility of the constructed point follows by construction because $d^{r,*}$ and $d^{c,*}$ adhere to the capacity constraint and the controls can only take control values in $\Xi$. Thus it remains to show the optimality.
    
    Let $(d,\delta,\beta,\gamma)$ be feasible for \eqref{eq:ip}. Let $(d^{r,*}, \delta^{r,*}, \gamma^*)$ be an optimal point for \eqref{r_dd} and $(d^{c,*}, \delta^{c,*}, \beta^*)$ be an optimal point for \eqref{c_dd}. Then the two inequalities 
    \[  \sum_{i=1}^N \sum_{j=1}^M (\frac{1}{2} c_{i,j}-\lambda_{i,j}) d_{i,j} + \alpha  \sum_{i=1}^{N} \sum_{j=1}^{M-1} \gamma_{i,j}   \geq  \sum_{i=1}^N \sum_{j=1}^M (\frac{1}{2} c_{i,j}-\lambda_{i,j}) d_{i,j}^{r,*} + \alpha  \sum_{i=1}^{N} \sum_{j=1}^{M-1} \gamma_{i,j}^*    \]
    and 
    \[  \sum_{i=1}^N \sum_{j=1}^M (\frac{1}{2} c_{i,j}+\lambda_{i,j}) d_{i,j} + \alpha  \sum_{i=1}^{N-1} \sum_{j=1}^{M} \beta_{i,j}     \geq   \sum_{i=1}^N \sum_{j=1}^M (\frac{1}{2} c_{i,j}+\lambda_{i,j}) d_{i,j}^{c,*} + \alpha  \sum_{i=1}^{N-1} \sum_{j=1}^{M} \beta_{i,j}^*    \]
  hold. The statement follows from adding both inequalities.
\end{proof}
\begin{remark}
    The most straightforward split is into one row and one column problem. Other splits of $\beta$ and $\gamma$ into two sets are also possible but might create subproblems which are not known to be pseudo-polynomially solvable.
\end{remark}

The bound provided by a Lagrangian relaxation with an optimal Lagrange multiplier is at least as good as the bound provided by the linear programming relaxation, see \cite{geoffrion1974}. Thus, Corollary \ref{cor: lpeq} gives that the bound from \eqref{eq:lr_dd} for an optimal $\lambda$, which maximizes the objective of \eqref{eq:lr_dd}, is at least as good as the bound from \eqref{eq:lp}. We provide a minimal example to show that the dual decomposition relaxation can also provide a tighter bound.
\begin{example}
     Let $N=M=2$, $\Delta=1$, $\alpha=1$, \[c=  \begin{pmatrix}
        -0.5  & -0.5 \\ -0.5 & -0.5
    \end{pmatrix}, \ x= \begin{pmatrix}
        0  & 0 \\ 0 & 0
    \end{pmatrix}.\] Then the unique optimal point of \eqref{r_dd} for $\lambda \equiv 0$ is $d^r \equiv 0 $ and of \eqref{c_dd} for  $\lambda \equiv 0 $ is $d^c \equiv 0$. Thus $d \equiv 0$ is optimal for \eqref{eq:ip} but the linear programming relaxation solution is \[d=\begin{pmatrix}
        0.25  & 0.25 \\ 0.25 & 0.25
    \end{pmatrix}\] which shows that the dual decomposition relaxation is tighter in this case.
\end{example}

We believe that the relaxation did not produce better bounds in preliminary experiments because the capacity was significantly larger than each row or column. Thus it is possible to model the fractional component by setting the corresponding entries in some rows/columns to $1$ and the entries in the remaining rows/columns to $0$. The example above is chosen in such a way that this can not occur. We hypothesise that a different split which does not have this weakness may produce better bounds at the cost of an significantly increased computational demand.
\section{Graph construction for the primal improvement algorithm}
\label{appendix; primal improve}
In this subsection we show how the problems arising in the primal improvement approach can be solved as a shortest path problem.
Let $(\tilde d, \tilde \beta, \tilde \gamma, \tilde \delta)$ be a feasible point of $\eqref{eq:ip}$ with $N=M$. We assume $N$ to be even for sake of clarity but the arguments also hold for odd $N$. We now consider the problem 
\begin{gather}
\begin{aligned}
    \min_{d,\delta,\beta,\gamma}\quad &  \sum_{i=1}^N \sum_{j=1}^N c_{i,j} d_{i,j} + \alpha  \left (\sum_{j=1}^{N} \sum_{i=1}^{N-1} \beta_{i,j} +  \sum_{i=1}^{N} \sum_{j=1}^{N-1} \gamma_{i,j} \right )\\
    \text{s.t.}\quad
    &  (d,\delta,\beta,\gamma) \in P_\Delta,\enskip  
     d \in \mathbb{Z}^{N \times N},\enskip \text{and}\enskip  d_{i,j} = \tilde d_{i,j} \text{ for } i \in \{2,4, \ldots, N\}, j \in  \{1,\ldots,N\}.
\end{aligned}
\tag{red IP}
\end{gather}
The graph construction is similar to the one in \cite{severitt2023efficient} as we determine the control values of the nodes starting in the first row and continuing through the rows not already fixed but with two changes. The first change is that we need to consider the jumps to already fixed nodes given by $\tilde d_{i,j}$ which we do by adjusting the weights of the edges. The second change is that we need to adjust the weights as we start fixing a new row to model that we do not consider jumps from the last node of the previous row to the first node of the current row.

We construct a graph $G(V,A)$ with $ \frac{N^2}{2} \Xi \Delta +2$ nodes including the source and the sink. There are $\frac{N^2}{2}$ layers with respectively $\Xi \Delta$ nodes where the nodes of the first layer are connected to the source and the nodes of the last layer are connected to the sink. Each node is only connected to nodes in the previous and following layer. We describe a node $v \in V$, excluding the source and the sink, as in the previous subsection as a triplet $v = (j,\delta, \eta) \in [\frac{N^2}{2}] \times \{x-y | x,y \in \Xi^N \} \times \{0, \ldots, \Delta\}.$ We define the notation $\ell(v)= j$, $p(v) = \delta$ and $\tilde r(v) = \eta.$ 

An edge $e$ exist between two nodes $u,v \in V \setminus \{s,t\}$ is defined by
\[ e=(u, v) \in A\quad:\Longleftrightarrow\quad
\left\{
\begin{matrix}
    \ell(v) = \ell(u) + 1,&\\
    \text{there exists } (a,b) \in A \text{ with } b = u,
    &\\
    \tilde{r}(v) = \tilde{r}(u)
                     - \gamma_{\ell(v)}\vert p(v)\vert.& \\
\end{matrix}
\right.
\]
The first condition enforces the layer structure, while the second and last conditions ensure that the capacity constraint holds inductively. For a clearer presentation, we introduce $i(v) = a $ and $j(v) = b$ where $\ell(v) = (2a-1) N+b$. The weight of an edge $e=(u,v) \in A$ with  $u,v \in V \setminus \{s,t\}$ is given by 

\begin{align*}
    w_{(u,v)} =  &c_{i(v),j(v)} + \begin{cases} |x_{i(v),j(v)} + p(v) - x_{i(u),j(u)} -p(u) | \quad &\text{if } i(u)=i(v), \\
0 \quad &\text{otherwise,}
\end{cases}\\ &+ \begin{cases} |x_{i(v),j(v)} + p(v) - x_{i(v)+N ,j(v)} -\tilde d_{i(v)+N ,j(v)} | \quad &\text{if } i(v)<N,\\
0 \quad &\text{otherwise,}
\end{cases} \\  &+ \begin{cases}|x_{i(v),j(v)} + p(v) - x_{i(v)-N ,j(v)} -\tilde d_{i(v)-N ,j(v)} | \quad &\text{if } i(v)>1,\\
0 \quad &\text{otherwise.}
\end{cases}
\end{align*}
We note that the second case distinction is not needed because $N$ was assumed to be even thus the first case is always fulfilled but the distinction is done anyway for sake of completeness for the case of an odd $N$.
The 
source $s = (0,\emptyset,\Delta)$
is connected to all $v \in V$ in the first layer
with sufficient remaining capacity, that is
\[ (s,v) \in A\quad:\Longleftrightarrow\quad
   \ell(v) = 1 \text{ and }\tilde{r}(v) = \Delta - |p(v)| \gamma_1.
\]
The weight is given by $w_{(s,v)}=c_1 p_N(v)+ |x_{2,1} + \tilde d_{2,1} - x_{1,1} - p(v) | $. 
The sink $t = (N M+ 1,\emptyset,0)$ is connected to each node
$v \in V$ in the last layer that has an incoming edge, that is
\[ (v,t) \in A\quad:\Longleftrightarrow\quad
   \text{there exists } u \in V \text{ such that } (u,v) \in A. \]
The weights have the value zero, that is $w_{(v,t)} = 0$. Just like in \cite{severitt2023efficient} we can obtain an optimal solution for \eqref{eq:ip-1D} by solving the shortest path problem from $s$ to $t$. 

\begin{figure}
    \centering
\resizebox{0.275 \textwidth}{!}{%
\begin{tikzpicture}[node distance={65mm}, thick, main/.style = {draw, circle}] 
 \foreach \x in {0,...,3} {
 \foreach \y in {0,2} {
    \node[main, minimum size = 0.5 cm] at(\x ,\y ) {};
    
}
}
 \foreach \x in {0,...,3} {
 \foreach \y in {1,3} {
    \node[main, minimum size = 0.5 cm] at(\x ,\y ) {\scalebox{.7}{\tiny F}};
    
}
}

\foreach \x in {0,...,3} {
\foreach \y in {1} {
\draw (\x, \y+0.25) -- (\x,\y+0.75) node [below left] {\tiny C};
}
}
\foreach \x in {0,...,3} {
\foreach \y in {0,2} {
\draw (\x, \y+0.25) -- (\x,\y+0.75) node [below left] {\tiny B};
}
}
\foreach \x in {0,...,2} {
\foreach \y in {0,2} {
\draw (\x+0.25, \y) -- (\x+0.75,\y)node [below left] {\tiny A};
}
}
\foreach \x in {0,...,2} {
\foreach \y in {1,3} {
\draw (\x+0.25, \y) -- (\x+0.75,\y);
}
}
	\end{tikzpicture}
}

\caption{The nodes marked as $F$ are the fixed nodes in the grid. Each edge ending in at least one node not fixed beforehand represents one of the case distinction in the weights definition for the shortest path approach,
specifically A stands for the first, B for the second and C for the last case. }
\label{fig:grid primal improve}
\end{figure}
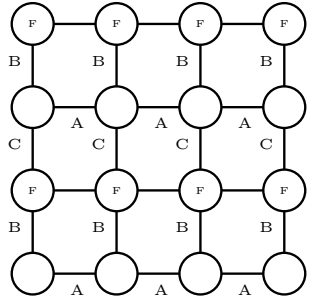

\bibliographystyle{abbrv} 
\bibliography{references}{}

\end{document}